\documentclass[a4paper, reqno]{amsart}
\usepackage[foot]{amsaddr}
\usepackage{fullpage}
\usepackage{graphicx}
\usepackage{color}
\usepackage{amsthm,amssymb}

\usepackage{tikz}
\usepackage{tikz-cd}
\usepackage[hidelinks]{hyperref}
\usepackage{mathtools}
\usepackage{xcolor}
\usepackage{algpseudocode}
\usepackage{algorithm}
\usepackage{enumerate}
\usepackage{nicematrix}

% TODO NOTES SETTINGS
\usepackage[colorinlistoftodos,prependcaption,textsize=tiny]{todonotes}
% \addtolength{\oddsidemargin}{-13mm}
% \addtolength{\evensidemargin}{13mm}

%% \usepackage[
%%     backend=biber,
%%     style=ieee,
%%     sorting=ynt
%% ]{biblatex}
%% \addbibresource{bibliography.bib}

\newcommand {\mm}[1]   {\ifmmode{#1}\else{\mbox{\(#1\)}}\fi}

\newcommand {\scalprod}[2] {{\langle #1 , #2 \rangle}}

\newcommand{\Rspace}        {\mm{{\mathbb R}}}

\newcommand{\Zspace}        {\mm{{\mathbb Z}}}
\newcommand{\Bgroup}[2]     {\mm{\sf B}_{#1}{({#2})}}
\newcommand{\Cgroup}[2]     {\mm{\sf C}_{#1}{({#2})}}
\newcommand{\Hgroup}[2]     {\mm{\sf H}_{#1}{({#2})}}
\newcommand{\Zgroup}[2]     {\mm{\sf Z}_{#1}{({#2})}}

\newcommand{\fgroup}[2]     {\mm{\sf f}_{#1}^{#2}}
\newcommand{\Bottleneck}[2] {\mm{{W_\infty}{({#1},{#2})}}}

\newcommand{\domain}[2]     {\mm{{\rm dom}{({#1},{#2})}}}
\newcommand{\Voronoi}[2]    {\mm{{\rm Vor}_{#1}{({#2})}}}

\newcommand{\Delaunay}[2]   {\mm{{\rm Del}_{#1}{({#2})}}}
\newcommand{\Alpha}[2]      {\mm{{\rm Alf}_{#1}{({#2})}}}
\newcommand{\AlphaSub}[3]   {\mm{{\rm Alf}_{#1}{({#2},{#3})}}}
\newcommand{\Nerve}[1]      {\mm{{\rm Nerve}{({#1})}}}
\newcommand{\Dgm}[2]        {\mm{{\rm Dgm}_{#1}{({#2})}}}
\newcommand{\Sd}[1]         {\mm{{\rm Sd}\,{{#1}}}}
\newcommand{\Star}[2]       {\mm{{\rm st}{({#1},{#2})}}}

\newcommand{\Lift}[2]       {\mm{{#1}_{#2}^{\,\rotatebox{90}{\scriptsize $\triangleright$}}}}
\newcommand{\Plank}[3]      {\mm{{\rm Plank}_{#1}{({#2},{#3})}}}
\newcommand{\CPlank}[4]     {\mm{{\rm Plank}_{#1}{({#2},{#3},{#4})}}}
\newcommand{\VCPlank}[4]    {\mm{{\rm Plank}_{#1}^{V}{({#2},{#3},{#4})}}}
\newcommand{\Ball}[2]       {\mm{{B}_{#1}{({#2})}}}
\newcommand{\VBall}[3]      {\mm{{B}_{#1}^{V}{({#2},{#3})}}}
\newcommand{\Radiusf}       {\mm{{\rm Rad}}}

\newcommand{\Kfun}          {\mm{{f_K}}}
\newcommand{\Lfun}          {\mm{{f_L}}}
\newcommand{\Mfun}          {\mm{{f_M}}}
\newcommand{\KLfun}         {\mm{{f_{K,L}}}}
\newcommand{\KMfun}         {\mm{{f_{K,M}}}}
\newcommand{\LMfun}         {\mm{{f_{L,M}}}}

\newcommand{\rank}[1]       {\mm{{\rm rank\,}{#1}}}

\newcommand{\card}[1]       {\mm{{\#}{#1}}}
\newcommand{\dime}[1]       {\mm{\rm dim\,}{#1}}

\newcommand{\Span}[1]       {\mm{\rm span\,}{#1}}

  % barycentric subdivision
  % star; the "_{{}_{}}" is used to automatically increase the parenthesis size when there are inner parentheses involved
  %closed star

\newcommand{\image}[2]      {\mm{\rm im}_{#1}{{\,}#2}}
\newcommand{\kernel}[2]     {\mm{\rm ker}_{#1}{\,#2}}
\newcommand{\coker}[2]      {\mm{\rm cok}_{#1}{\,#2}}
\newcommand{\kkk}           {\mm{\sf k}}

\newcommand{\Edist}[2]      {\mm{\|{#1}-{#2}\|}}
\newcommand{\norm}[1]       {\mm{\|{#1}\|}}

\newcommand{\problemP}[1]{(P_{#1})}
\newcommand{\problemD}[1]{(D_{#1})}

\newcommand{\Skip}[1]       {}

\definecolor{blue-red}{rgb}{0.8, 0.00, 0.95}

% ==== environment definitions ===

\theoremstyle{definition}

\newtheorem{theorem}{Theorem}
\numberwithin{theorem}{section}

\newtheorem{proposition}[theorem]{Proposition}

\newtheorem{lemma}[theorem]{Lemma}
\newtheorem{corollary}[theorem]{Corollary}

\newtheorem{definition}[theorem]{Definition}

\numberwithin{equation}{section}

%===================================================

% \bibliographystyle{plainurl}% the mandatory bibstyle

\title{Chromatic Alpha Complexes}

\author{Sebastiano Cultrera di Montesano$^1$}
\email{$^1$sebastiano.cultrera@ist.ac.at}
\author{Ond\v{r}ej Draganov$^2$}
\email{$^2$ondrej.draganov@ist.ac.at}
\author{Herbert Edelsbrunner$^3$}
\email{$^3$herbert.edelsbrunner@ist.ac.at}
\author{Morteza Saghafian$^4$}
\email{$^4$morteza.saghafian@ist.ac.at}

\address{$^{1,2,3,4}$ISTA (Institute of Science and Technology Austria), Kloster\-neu\-burg, Austria}

% \date{}

% \ccsdesc[100]{Theory of computation~Computational geometry}

\keywords{Topological data analysis, Delaunay mosaic, alpha complex, chromatic sets, persistent homology, kernel/image/cokernel persistent homology, radius function, discrete Morse theory, exact sequences.}

\begin{document}

\begin{abstract}
    Motivated by applications in the sciences, we study finite chromatic sets in Euclidean space from a topological perspective. 
    Based on the persistent homology for images, kernels and cokernels, we design provably stable homological quantifiers that describe the geometric micro- and macro-structure of how the color classes mingle. 
    These can be efficiently computed using chromatic variants of Delaunay and alpha complexes, and code that does these computations is provided.
\end{abstract}

\maketitle

%%%%%%%%%%%%%%%%%%%%%%%%%%%%%%%%%%%%%%%%%%%%%%%%%%%%%%%%%%%
%%%%%%%%%%%%%%%%%%%%%%%%%%%%%%%%%%%%%%%%%%%%%%%%%%%%%%%%%%%
\section{Introduction}
\label{sec:1}
%%%%%%%%%%%%%%%%%%%%%%%%%%%%%%%%%%%%%%%%%%%%%%%%%%%%%%%%%%%
%%%%%%%%%%%%%%%%%%%%%%%%%%%%%%%%%%%%%%%%%%%%%%%%%%%%%%%%%%%

This paper takes a topological approach to quantifying spatial interactions between several point sets, which we distinguish by color.
The aim is the development of a mathematical language to answer questions like:
``how, how often, and at what scale do blue points surround groups of red points?'', or ``are there cycles made out of blue, red, and green points that make essential use of all three colors?''. We tackle these questions from a multi-scale homological perspective, with the goal of disentangling patterns such as the ones shown in Figure~\ref{fig:patterns-filled}.
\begin{figure}[htb]
  \centering \vspace{0.1in}
  \resizebox{!}{1.8in}{\input{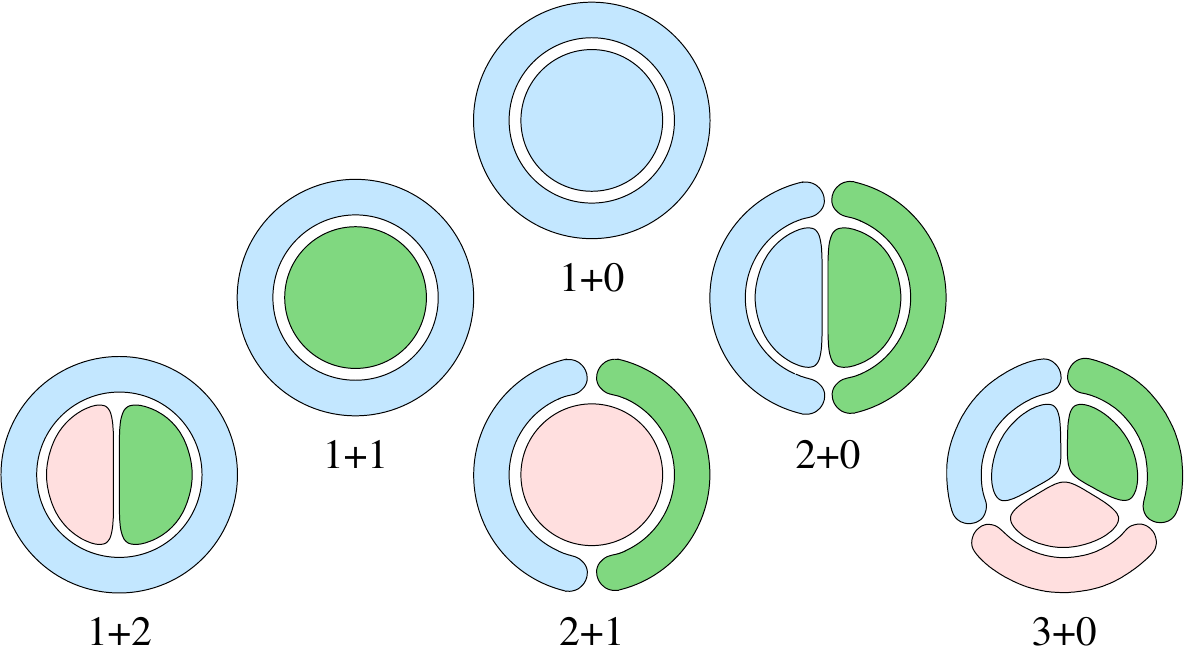_t}}
  \vspace{-0.05in}
  \caption{\footnotesize{Mingling patterns distinguished by the number of colors forming a cycle and the number of additional colors filling this cycle.
  The drawings are caricatures of similar patterns for cycles different from circles and fillings different from disks.
  The patterns are but a first attempt to differentiate types of interactions, and they are by no means precise or exhaustive.
  For example, two additional colors can fill a cycle in at least two different ways (see the pattern of type 1+2): in a collaboration as suggested in the drawing, or each individually, like two different patterns of type 1+1.}}
  \label{fig:patterns-filled}
\end{figure}

\smallskip
One of the motivations for this work is the recent growth of interest in \emph{spatial biology}, which combines the biological properties of cells with their locations.
An example is the \emph{tumor immune microenvironment} \cite{Bin18} in cancer research, which focuses on the interplay between tumor and immune cells.
Can we identify as well as quantify patterns in the interaction between cell types that correlate with clinical outcomes?
Another biological process that raises similar mathematical questions is \emph{cell sorting} (the natural segregation of cells by types), which in early development is studied for instance in \cite{Mai12} and in the context of somitogenesis is mentioned in \cite{Mia23}.
This motivates the study of \emph{chromatic point sets}, in which the points represent cells and colors represent their types.

\smallskip
Our specific approach is based on the formulation of persistent homology in terms of growing balls~\cite{EdHa10}, which is often used in topological data analysis to describe and quantify spatial arrangements of (mono-chromatic) point sets. 
The idea behind the construction is to transform a discrete set of points into a nested sequence of topological spaces.
This is achieved by growing balls centered at the input points (from zero to infinite radius), which yields a sequence of progressively larger shapes.
Such a nested sequence of spaces is called a \emph{filtration}, and it becomes a sequence of vector spaces connected by linear maps when we apply homology with field coefficients.
The latter is also known as a \emph{persistence module}.
Studying the induced maps rather than just individual vector spaces, we can not only identify radii at which topological features appear and disappear, but also pair these events to quantify for how long each feature persists in the filtration.

\smallskip
When the data is bi-chromatic, say \emph{red} and \emph{blue}, we have two sets of growing balls at our disposal. 
A natural way to relate them is to consider the inclusion map between the union of balls of one color, say the blue ones, into the union of balls of both colors.
Like in the mono-chromatic case, we apply homology with field coefficients and get two persistence modules together with connecting maps, which are induced by inclusions relating the two filtrations.
The connecting maps carry important information about the mingling of the two point sets. 
For example, if a cycle is present in the blue filtration at a certain radius, it may or may not also be present in the red and blue filtration. 
If it is, such a cycle will be in the image of the connecting map, while if it is not, it will be in the kernel. 
More generally, given a pair of filtrations related by inclusions, we can look at the persistent homology of the subspace, the full space, the relative space, as well as the kernels, the images, and the cokernels of the connecting maps \cite{CEHM09}. 
We call the resulting collection the \emph{6-pack of persistence diagrams}, which we use to capture different aspects of the mingling between geometric sets.
One contribution of this paper is the study of relations between the six persistence diagrams composing the 6-pack, such as linear relations between their \emph{$1$-norms}, which are the sums of persistences of the points in the diagrams (Theorem \ref{thm:norm_relations}).

\smallskip
Just like alpha complexes are a possible discrete model for the union of balls in the mono-chromatic setting \cite{EdKiSe83, EdMu94}, we seek a chromatic variant that enables the computation of the 6-pack of persistent diagrams. 
At first sight, this seems problematic as the red alpha complex does not include into the alpha complex of the union of red and blue points. 
Similarly, taking the full red subcomplex does not work either, as it does not capture the homotopy type of the union of red balls.
We circumvent these limitations by using a third type of complex,
the \emph{chromatic Delaunay mosaic}, which was introduced for two colors by Reani and Bobrowski \cite{ReBo21} and which we extended beyond two colors in \cite{BCDES22}. 
This mosaic uses an extra dimension for each color beyond the first to capture the interaction between colors.
Counter-balancing the increase in dimension, \cite{BCDES22} shows that the complexity of the mosaic is moderate for a small number of colors.
For example, this paper gives linear bounds on the expected size for points in two dimensions randomly colored by a constant number of colors.
Building on the results in \cite{BCDES22}, we show that the chromatic Delaunay mosaic can be equipped with a radius function whose sublevel sets capture the alpha complexes of different color classes as well as their interactions.
Within this setting, we show that the radius function on the chromatic Delaunay mosaic can be computed in linear time assuming the dimension and the number of colors is constant (Theorem \ref{thm:chromatic_radius_function_in_linear_time}), and that it has the combinatorial structure of a generalized discrete Morse function (Theorem~\ref{thm:chromatic_radius_functions_are_generalized_discrete_Morse}).
Code that implements these algorithms is available at \cite{DrMa23}---see the github repository for details.

\smallskip
The entire development could have been based on chromatic variants of the \v{C}ech complex, with almost no differences, except that the complex has size exponential in the number of vertices, making computational experiments of the kind presented in this paper infeasible.
Similarly, we could have used chromatic variants of the Vietoris--Rips complex, but the complexes would again be significantly larger, and we would have to cope with topological artifacts, which at this time are not understood. 
More recently, Dowker complexes and witness complexes have been suggested as possible candidates for encoding spatial relations in the tumor microenvironment \cite{Sto23}. 
The main limitation of these complexes is their lack of stability: perturbing a point set can produce a very different filtration.
Moreover, the Dowker complex is limited to the study of two interacting point sets, while the witness complex requires a choice of ``landmark points" and it is not clear how to choose those in practice.

\smallskip \noindent \textbf{Outline.}
Section~\ref{sec:2} reviews the alpha complex in the mono-chromatic case.
Section~\ref{sec:3} extends this construction to the chromatic case. 
Section~\ref{sec:4} proves that the radius function on the chromatic Delaunay mosaic is generalized discrete Morse.
Section~\ref{sec:5} studies the persistent homology of the chromatic alpha complexes, with an emphasis on the two and three colors settings.
Section~\ref{sec:6} concludes the paper.

%%%%%%%%%%%%%%%%%%%%%%%%%%%%%%%%%%%%%%%%%%%%%%%%%%%%%%%%%%%
%%%%%%%%%%%%%%%%%%%%%%%%%%%%%%%%%%%%%%%%%%%%%%%%%%%%%%%%%%%
\section{Mono-chromatic Point Sets}
\label{sec:2}
%%%%%%%%%%%%%%%%%%%%%%%%%%%%%%%%%%%%%%%%%%%%%%%%%%%%%%%%%%%
%%%%%%%%%%%%%%%%%%%%%%%%%%%%%%%%%%%%%%%%%%%%%%%%%%%%%%%%%%%

In this section, we recall several standard definitions and relevant results used in topological data analysis of point sets with no extra color labels.

%%%%%%%%%%%%%%%%%%%%%%%%%%%%%%%%%%%%%%%%%%%%%%%%%%%%%%%%%%%
\subsection{Voronoi Tessellation and Delaunay Complex}
\label{sec:2.1}
%%%%%%%%%%%%%%%%%%%%%%%%%%%%%%%%%%%%%%%%%%%%%%%%%%%%%%%%%%%

Letting $A \subseteq \Rspace^d$ be a finite set of points, the \emph{Voronoi domain} of $a \in A$, denoted $\domain{a}{A}$, is the set of points $x \in \Rspace^d$ that satisfy $\Edist{x}{a} \leq \Edist{x}{b}$ for all $b \in A$.
Observe that $\domain{a}{A}$ is the intersection of finitely many closed half-spaces and therefore a closed convex polyhedron.
The \emph{Voronoi tessellation} of~$A$, denoted $\Voronoi{}{A}$, is the collection of Voronoi domains defined by the points in $A$.
These domains cover $\Rspace^d$ while their interiors are pairwise disjoint.
Nevertheless, a collection of these polyhedra may overlap in a shared face, which we refer to as a \emph{Voronoi cell}.
For a generic set, $A$, the dimension of a Voronoi cell is determined by the number of Voronoi domains that share it.

\begin{definition}[Conventional Genericity]
  \label{dfn:conventional_genericity}
  We call a point set, $A \subseteq \Rspace^d$, \emph{generic} if every $p$-sphere, with $0 \leq p < d$, passes through at most $p+2$ points of $A$.
\end{definition}
\noindent Then, indeed, the common intersection of any $p+1$ Voronoi domains is either empty or a convex polyhedron of dimension $d-p$.
Note that our notion of genericity allows for more than $p+1$ points on a $p$-dimensional affine subspace.

\smallskip
The \emph{Delaunay complex}, denoted $\Delaunay{}{A}$, is the simplicial complex with vertex set $A$ that contains a simplex corresponding to each collection of Voronoi domains with non-empty common intersection.
It is isomorphic to the nerve of the Voronoi domains,
\begin{align}
  \Nerve{\Voronoi{}{A}} &= \{ \nu \subseteq \Voronoi{}{A} \mid \bigcap \nu \neq \emptyset \}.
\end{align}
The Delaunay complex can also be characterized with empty spheres passing through points.
A ($(d-1)$-dimensional) sphere, $S$, is \emph{empty} if all points of $A$ lie on or outside the sphere, so there are no points inside the sphere; that is, no points in the interior of the ball bounded by the sphere.
Furthermore, we say the sphere \emph{passes through} the points that lie on the sphere; see the left panel of Figure~\ref{fig:different_radii} for an empty $1$-sphere that passes through three points.

\begin{lemma}
  \label{lem:empty_sphere_characterization}
  Let $A \subseteq \Rspace^d$ be a finite set of points, and $\nu \subseteq A$. 
  Then $\nu$ is a simplex in the Delaunay complex iff there exists an empty sphere that passes through all points in $\nu$.
\end{lemma}
\begin{proof}
  Let $\nu \in \Delaunay{}{A}$.
  By definition of Delaunay complex, $\bigcap_{a \in \nu} \domain{a}{A} \neq \emptyset$, and we let $x$ be a point in this common intersection of Voronoi domains. 
  Then $x$ has the same distance to all points in $\nu$ and the same or a larger distance to all other points in $A$.
  Hence, $x$ is the center of an empty sphere that passes through all points in $\nu$, and possibly also through other points of $A$.
  Each of the above implications can be reversed, which implies that $x$ is in the intersection of Voronoi domains of points in $\nu$ iff $x$ is the center of an empty ball that passes through $\nu$.
\end{proof}

\noindent \emph{Remark.}
Notice that the Delaunay complex generally differs from the \emph{dual} of the Voronoi tessellation, which contains a $p$-dimensional cell for each $(d-p)$-dimensional intersection of Voronoi domains.
We call this dual the \emph{Delaunay mosaic}: it contains the convex hull of a subset of points as a cell whenever the corresponding collection of Voronoi domains is maximal with this necessarily non-empty common intersection.
However, for a generic set of points, the Delaunay mosaic is the Delaunay complex of the points.
Throughout this paper, we will work with the Delaunay complex rather than the mosaic, and we will appeal to genericity in cases this is necessary.
Delaunay mosaic can also be characterized with empty spheres passing through points: the convex hull of $\nu$ is a cell in the Delaunay mosaic iff there is an empty sphere that passes through the points in $\nu$ and through no other points of $A$.

%%%%%%%%%%%%%%%%%%%%%%%%%%%%%%%%%%%%%%%%%%%%%%%%%%%%%%%%%%%
\subsection{Alpha Complex}
\label{sec:2.2}
%%%%%%%%%%%%%%%%%%%%%%%%%%%%%%%%%%%%%%%%%%%%%%%%%%%%%%%%%%%

There can be more than one empty sphere passing through the vertices of a simplex, $\nu\in\Delaunay{}{A}$, but there is a unique \emph{smallest empty sphere} that passes through the points in $\nu$.
This yields a \emph{radius function} on the Delaunay complex, $\Radiusf \colon \Delaunay{}{A}\to \Rspace$, which maps each simplex to the radius of the smallest empty sphere that passes through its vertices.
The \emph{alpha complex}, $\Alpha{r}{A}\subseteq\Delaunay{}{A}$, is the sublevel set consisting of all simplices with radius at most $r$. Note that for $r\leq R$ we have $\Alpha{r}{A} \subseteq \Alpha{R}{A}$.

\smallskip
Let $\Ball{r}{a}$ be the $d$-ball with radius $r$ centered at $a \in \Rspace^d$. 
The \emph{Voronoi ball} of $a \in A$ with radius $r$ is this ball restricted to the Voronoi domain: $\VBall{r}{a}{A} = \Ball{r}{a} \cap \domain{a}{A}$. 
Using the correspondence in Lemma~\ref{lem:empty_sphere_characterization}, it is straightforward to observe that the alpha complex, $\Alpha{r}{A}$, is isomorphic to the nerve of the Voronoi balls of $A$ with radius $r$. 
The Nerve Theorem \cite{Bor48} then implies that
\begin{align}
    \Alpha{r}{A} &\simeq \bigcup\nolimits_{a\in A} \VBall{r}{a}{A} = \bigcup\nolimits_{a\in A} \Ball{r}{a}.
\end{align}
Furthermore, a generalization of this theorem guarantees that the homotopy equivalences for different radii, $r \leq R$, commute with the inclusions on both sides.
From the perspective of topology, we can equivalently study the union of growing balls or its discrete counterpart, the growing alpha complex.

\begin{figure}[htb]
    \centering
    \includegraphics[width=.4\textwidth]{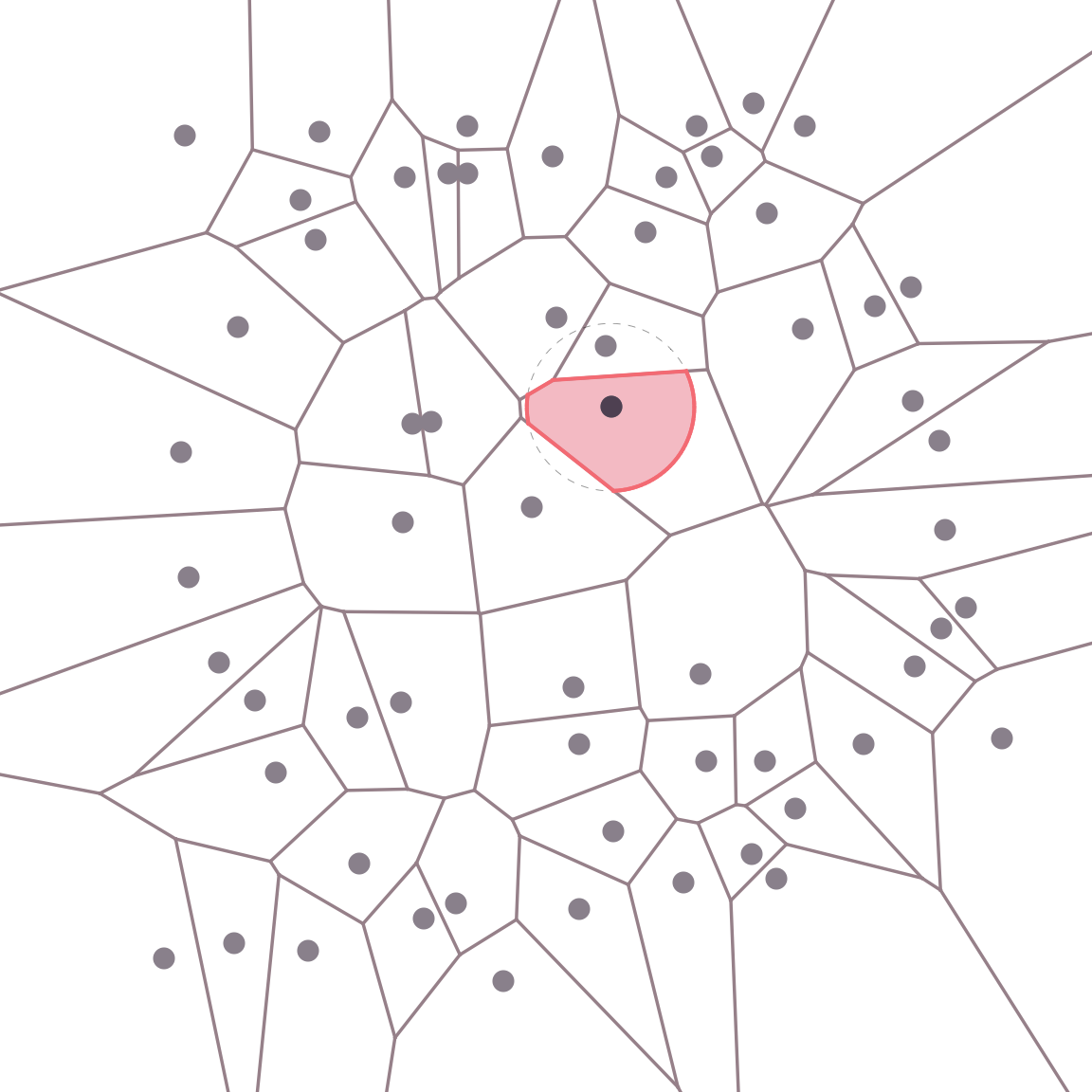}
    \hspace{5mm}
    \includegraphics[width=.4\textwidth]{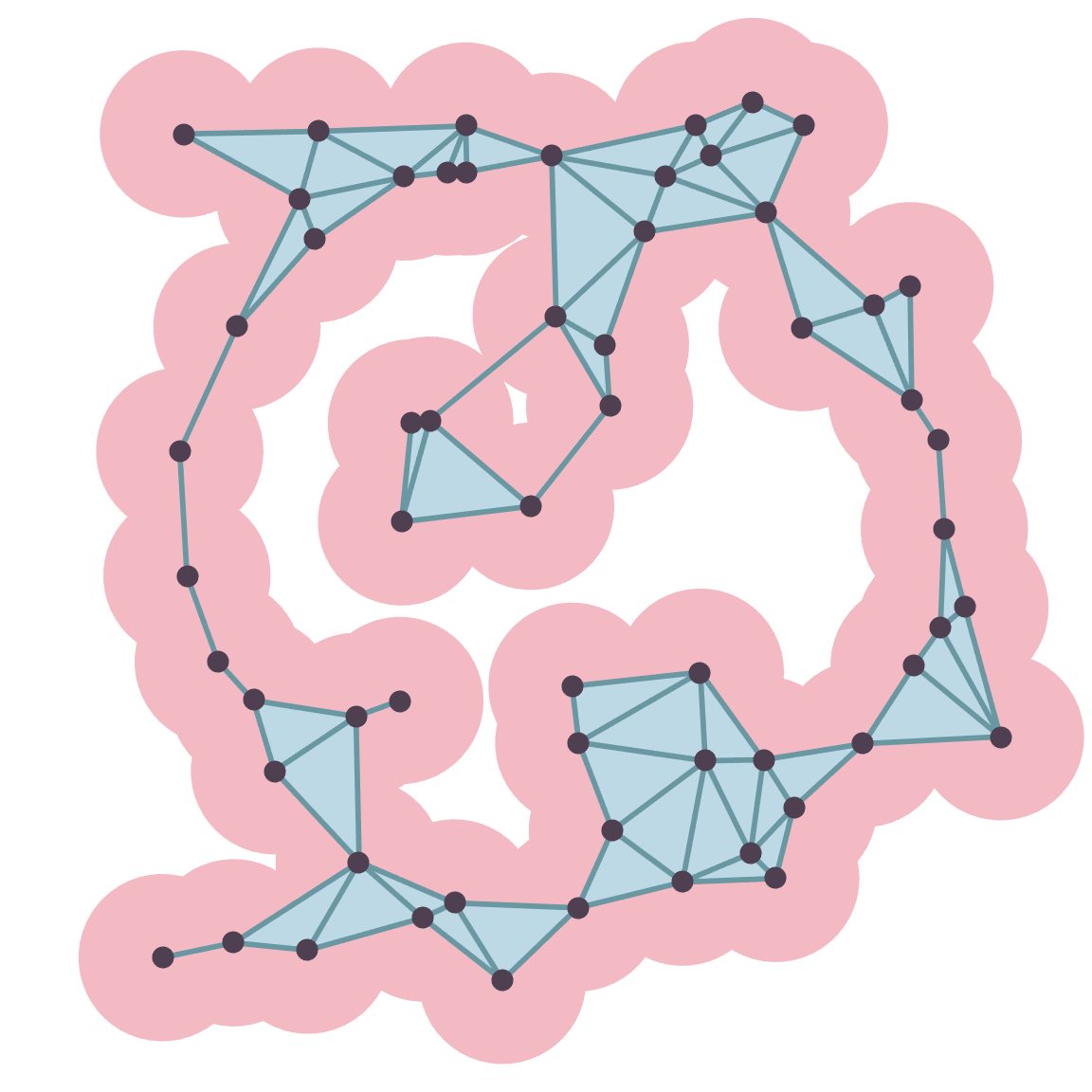}
    \caption{\footnotesize{On the \textit{left}: a set, $A \subseteq \Rspace^2$, together with its Voronoi tessellation, $\Voronoi{}{A}$, and one Voronoi ball highlighted. 
    On the \textit{right}: the union of disks, $\bigcup_{a\in A} \Ball{r}{a}$, with the alpha complex, $\Alpha{r}{A}$, superimposed.}}
    \label{fig:voronoi_ball_alpha_complex}
\end{figure}

\smallskip
For a finite point set, the Delaunay complex is finite, and hence the filtration of different alpha complexes is finite. 
To better understand the structure of this filtration, it is helpful to understand the level sets of the radius function. 
Given simplices $\alpha \subseteq \gamma$ in a simplicial complex $K$, write $[\alpha, \gamma]$ for the simplices $\beta$ that satisfy $\alpha \subseteq \beta \subseteq \gamma$; that is: $[\alpha, \gamma]$ is an \emph{interval} in the face poset of $K$.
Given a monotonic function $f \colon K \to \Rspace$, an \emph{interval of $f$} is such an interval on which $f$ is constant, and it is \emph{maximal} if it is not strictly contained in a larger interval of $f$.
\begin{definition}\label{dfn:generalised_discrete_morse_function}
  A monotonic function on a simplicial complex, $f \colon K \to \Rspace$, is \emph{generalized discrete Morse} if the maximal intervals of $f$ partition $K$.
\end{definition}
Equivalently, $f$ is generalized discrete Morse if every level set, $K_t = f^{-1} (t)$, is a disjoint union of maximal intervals.
Discrete Morse theory was introduced by Forman~\cite{For98} and later generalized by Freij~\cite{Fre09} before it became a research area on its own; see the book by Scoville~\cite{Sco19}.

\begin{proposition}[{\cite[Corollary 4.6]{BaEd17}}]
  \label{prop:radius_functions_are_generalized_discrete_Morse}
    Let $A \subseteq \Rspace^d$ be finite and generic\footnote{
      The general position condition in \cite{BaEd17} is more restrictive than Definition~\ref{dfn:conventional_genericity}, but for this claim the latter suffices.
      Indeed, Proposition~\ref{prop:radius_functions_are_generalized_discrete_Morse} is a special case of Theorem~\ref{thm:chromatic_radius_functions_are_generalized_discrete_Morse}, and the chromatic genericity in Definition~\ref{dfn:chromatic_genericity} can be relaxed to the genericity in Definition~\ref{dfn:conventional_genericity} if there is only one color ($k = 0$).}.
    Then $\Radiusf \colon \Delaunay{}{A} \to \Rspace$ is a generalized discrete Morse function.
\end{proposition}

A benefit of this result is a better understanding of how the homology changes throughout the filtration.
Consider constructing the alpha complex by adding one interval at a time in the order of their radius values. 
The homology of the complex does not change when the interval contains two or more simplices, and it necessarily changes when the interval contains only one simplex.

%%%%%%%%%%%%%%%%%%%%%%%%%%%%%%%%%%%%%%%%%%%%%%%%%%%%%%%%%%%
%%%%%%%%%%%%%%%%%%%%%%%%%%%%%%%%%%%%%%%%%%%%%%%%%%%%%%%%%%%
\section{Chromatic Point Sets}
\label{sec:3}
%%%%%%%%%%%%%%%%%%%%%%%%%%%%%%%%%%%%%%%%%%%%%%%%%%%%%%%%%%%
%%%%%%%%%%%%%%%%%%%%%%%%%%%%%%%%%%%%%%%%%%%%%%%%%%%%%%%%%%%

The main concept in this section is the
chromatic alpha complex, which generalizes the bi-chromatic construction in \cite{ReBo21} to three and more colors.
A crucial ingredient is the radius function on the chromatic Delaunay complex, whose sublevel sets are the chromatic alpha complexes.
The section follows the logical structure of Section~\ref{sec:2} to clearly showcase the analogies between the mono-chromatic and the more general, chromatic settings. 
Indeed, we will see that the chromatic definitions match the definitions in the previous section when we only consider one color.

\smallskip
A \emph{chromatic point set} is a mapping $\chi \colon A \to \sigma$, in which $A\subseteq \Rspace^d$ is a finite point set, and $\sigma$ is a set of colors.
We usually write $s = \card{\sigma} - 1$ and $\sigma = \{ 0, 1, \ldots, s \}$. 
Furthermore, we write $A_j = \chi^{-1}(j)$ for the subset of points of a given color $j \in \sigma$. 
We fix this notation throughout the section.

%%%%%%%%%%%%%%%%%%%%%%%%%%%%%%%%%%%%%%%%%%%%%%%%%%%%%%%%%%%
\subsection{Chromatic Voronoi Tessellation and Chromatic Delaunay Complex}
\label{sec:3.1}
%%%%%%%%%%%%%%%%%%%%%%%%%%%%%%%%%%%%%%%%%%%%%%%%%%%%%%%%%%%

The \emph{chromatic Voronoi tessellation}, 
$\Voronoi{}{\chi}$, is the collection of Voronoi domains, $\domain{a}{A_{\chi(a)}}$, for all points $a \in A$. 
In other words, $\Voronoi{}{\chi}$ is the union of the $\Voronoi{}{A_j}$, over all $j \in \sigma$.
Differently colored Voronoi domains can have overlapping interiors. 
Indeed, every point in $\Rspace^d$ is covered by at least $s+1$ different domains from $\Voronoi{}{\chi}$, namely at least one domain for each color.

\smallskip
The \emph{chromatic Delaunay complex}, denoted $\Delaunay{}{\chi}$, contains a simplex $\nu \subseteq A$ if the common intersection of the corresponding domains is non-empty.
It is isomorphic to the nerve of the chromatic Voronoi tessellation.
A direct analogy of the characterization of the Delaunay complex with empty spheres is the characterization of the chromatic Delaunay complex with what we call empty stacks. 
A \emph{$\sigma$-stack} in $\Rspace^d$ is a collection of $s+1$ concentric $(d-1)$-spheres, one for each color in $\sigma$; see Figure~\ref{fig:stack}. 
We drop $\sigma$ from the notation if it is clear from the context. 
The \emph{radius} of the stack is the maximum radius of its spheres, and its \emph{center} is the common center of the spheres. 
We label the spheres $S_j$, $j \in \sigma$, and say the stack is \emph{empty} if $S_j$ is empty of points in $A_j = \chi^{-1} (j)$, for each $j \in \sigma$.
We say the stack \emph{passes through} $\nu \subseteq A$ if $S_j$ passes through $\nu \cap A_j$, for each $j \in \sigma$.
\begin{lemma}\label{lem:empty_stack_characterization}
    Let $\chi \colon A \to \sigma$ be a chromatic point set in $\Rspace^d$, write $A_j = \chi^{-1}(j)$, and let $\nu \subseteq A$ be a collection of points. 
    Then $\nu \in \Delaunay{}{\chi}$ iff there exists an empty stack of spheres that passes through $\nu$. 
\end{lemma}
\begin{proof}
    Let $\nu_j = \nu \cap A_j$ be the $j$-colored points in $\nu$, for each $j \in \sigma$. 
    By Lemma~\ref{lem:empty_sphere_characterization}, the existence of an empty sphere, $S_j$, with center $x$ that passes through $\nu_j$ is equivalent to $x$ being in the intersection of the corresponding Voronoi domains: $x \in \bigcap_{a \in \nu_j} \domain{a}{A_j}$. 
    Therefore, there exists an empty stack passing through $\nu$ centered at $x$ iff $x \in \bigcap_{a \in \nu_j} \domain{a}{A_j}$ for each $j \in \sigma$. 
    This is the defining property of $\nu$ being in $\Delaunay{}{\chi}$, namely that $\bigcap_{a \in \nu} \domain{a}{A_{\chi(a)}}$ is non-empty.
\end{proof}

\begin{figure}[htb]
    \centering
    \includegraphics[width=.4\textwidth]{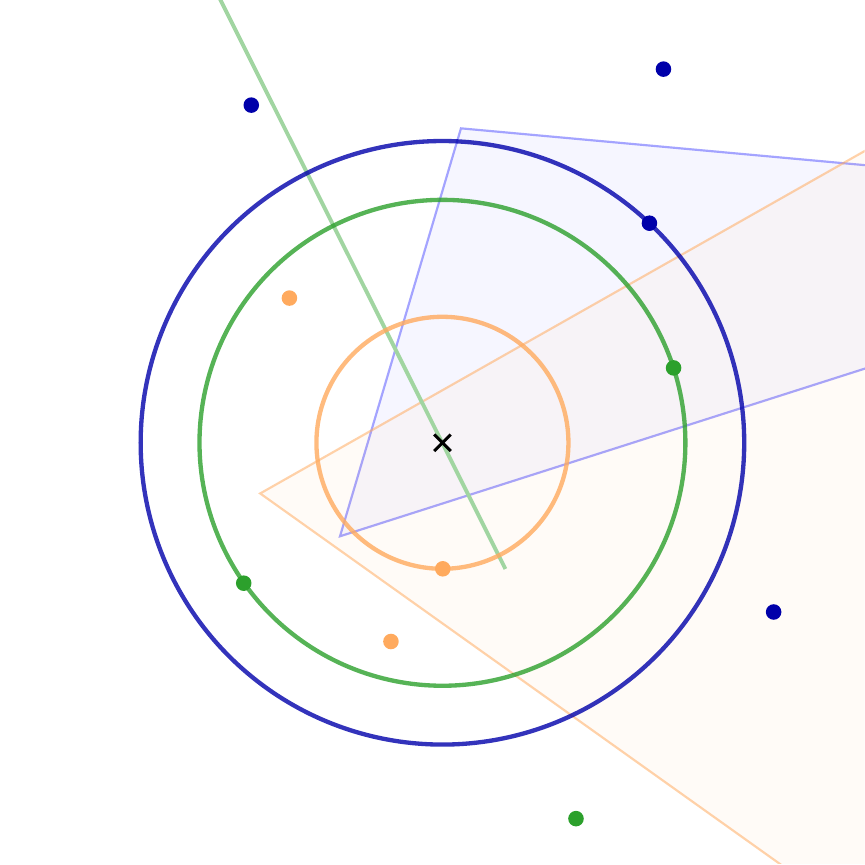} \hspace{10mm}
    \includegraphics[width=.4\textwidth]{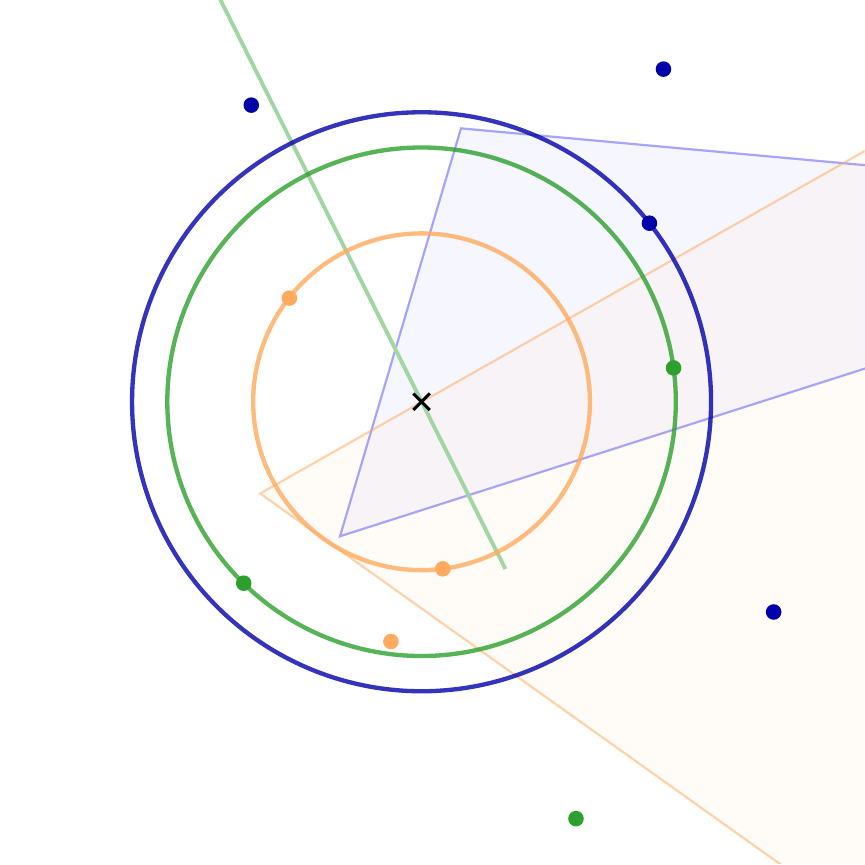}
    \caption{
    \footnotesize{Two empty stacks in $\Rspace^2$ that pass through one blue point, two green points, and one orange point forming a simplex $\nu\in\Delaunay{}{\chi}$. 
    (In fact, the stack on the \emph{right} passes through \emph{two} orange points, and it also passes through the \emph{one} orange point that the \emph{left} orange circle passes through.)
    The set of centers of all empty stacks that pass through these four points is the intersection of three Voronoi cells: a blue $2$-cell, a green $1$-cell, and an orange $2$-cell.
    The \emph{right panel} shows the smallest empty stack in this collection: its center lies on the boundary of the intersection of Voronoi cells, which is the reason why one of its circles passes through an extra point.
    }}
    \label{fig:stack}
\end{figure}

%%%%%%%%%%%%%%%%%%%%%%%%%%%%%%%%%%%%%%%%%%%%%%%%%%%%%%%%%%%
\subsection{Chromatic Alpha Complex}
\label{sec:3.2}
%%%%%%%%%%%%%%%%%%%%%%%%%%%%%%%%%%%%%%%%%%%%%%%%%%%%%%%%%%%

Like in the mono-chromatic setting, we define chromatic alpha complexes as sublevel sets of the radius function defined on the chromatic Delaunay complex.
We recall that the radius of a stack is the radius of its largest sphere.
\begin{definition}\label{dfn:chromatic_alpha_complex}
    Let $\chi \colon A \to \sigma$ be a chromatic point set, and $\Radiusf \colon \Delaunay{}{\chi} \to \Rspace$ the \emph{radius function} defined by mapping $\nu \in \Delaunay{}{\chi}$ to the radius of the smallest empty stack that passes through $\nu$.
    The \emph{chromatic alpha complex} of $\chi$ with radius $r \in \Rspace$ is $\Alpha{r}{\chi} = \Radiusf^{-1} [0,r]$.
\end{definition}
\noindent Using the empty spheres and empty stacks characterizations (Lemmas~\ref{lem:empty_sphere_characterization} and \ref{lem:empty_stack_characterization}), we can see a clear relation between alpha complexes and chromatic alpha complexes. 
If there exists an empty $(d-1)$-sphere, $S$, of radius $r$ passing through $\nu\subseteq A$, then there also exists an empty stack of radius $r$ passing through $\nu$. 
Indeed, we can take $S_j = S$ for each $j \in \sigma$. 
Similarly, an empty sphere, $S$, that passes through points $\nu \subseteq A_j$ is itself an empty stack when we set $S_i$ to be a sphere with zero radius for $i \neq j$.
However, the same simplex can have a different radius in $\Delaunay{}{A}$ and in $\Delaunay{}{\chi}$: the smallest empty sphere can have strictly larger radius than the smallest empty stack passing through the same points; see Figure~\ref{fig:different_radii}.
We formulate the above observation in a slightly more general form.
\begin{lemma}\label{lem:alpha_union_in_alpha_chromatic}
    Let $\chi \colon A \to \sigma$ be a chromatic point set in $\Rspace^d$, and $\tau\subseteq\sigma$ a subset of the colors.
    \begin{enumerate}[(i)]
        \item \label{lem:alpha_union_in_alpha_chromatic_i} Let $\eta \colon \sigma \to \tau$ be a merging of colors. 
        Then $\Alpha{r}{\eta \circ \chi} \subseteq \Alpha{r}{\chi}$ for all $r$.
        \item \label{lem:alpha_union_in_alpha_chromatic_ii} Let $\chi|\tau \colon \chi^{-1}(\tau) \to \tau$ be the restriction of $\chi$ to colors in $\tau$. 
        Then $\Alpha{r}{\chi|\tau} \subseteq \Alpha{r}{\chi}$ for all $r$.
    \end{enumerate}
    In particular, $\Alpha{r}{A} \subseteq \Alpha{r}{\chi}$, and $\Alpha{r}{A_j} \subseteq \Alpha{r}{\chi}$ for every $j \in \sigma$, in which $A_j = \chi^{-1}(j)$.
\end{lemma}
\begin{proof}
    To see (i), let $(S_i)_{i \in \tau}$ be an empty stack of radius $r$ that passes through the points in $\nu \subseteq A$;
    it witnesses $\nu\in\Alpha{r}{\eta\circ\chi}$.
    Then $(S_{\eta(j)})_{j \in \sigma}$ is an empty stack of radius $r$ that passes though the points in $\nu$; it witnesses $\nu\in\Alpha{r}{\chi}$.
    To see (ii), let $(S_i)_{i \in \tau}$ be an empty stack of radius $r$ that witnesses $\nu\in\Alpha{r}{\chi|\tau}$.
    Adding zero-radius spheres for the colors $j \in \sigma \setminus \tau$, we get an empty stack that witnesses $\nu \in \Alpha{r}{\chi}$.
\end{proof}

\begin{figure}[htb]
    \centering
    \includegraphics[width=.32\textwidth]{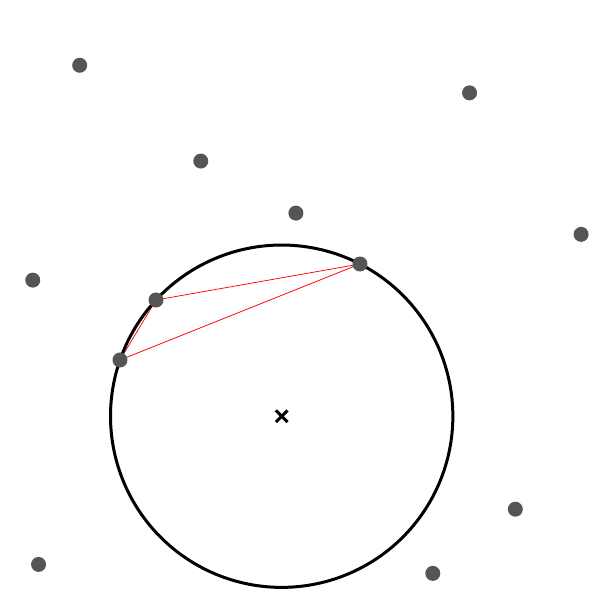} \hspace{22mm}
    \includegraphics[width=.32\textwidth]{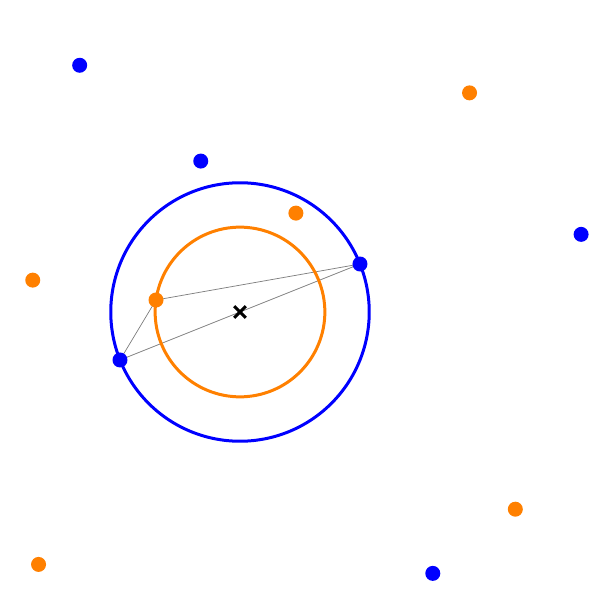}
    \caption{\footnotesize{An obtuse triangle with two blue points and an orange point at the obtuse angle.
    The smallest empty sphere that passes through the three points on the \emph{left} has strictly larger radius than the smallest empty stack that passes through the three points on the \emph{right.}
    Therefore, the triangle belongs to both, the Delaunay complex and the chromatic Delaunay complex, but it has a different value under the two radius functions.}}
    \label{fig:different_radii}
\end{figure}

An important reason why the alpha complex is useful in the mono-chromatic setting is its correspondence to the union of balls growing from the input points. 
From the topological point of view, studying the growing union of balls is equivalent to studying the growing alpha complex.
In the following, we draw an analogous connection for chromatic alpha complexes. 
One important distinction is that there is more structure to be preserved in the chromatic setting: not only the topological spaces themselves, but also how they are related to each other. 
For example, for a bi-chromatic point set as in Figure~\ref{fig:disks_chromatic}, we study the inclusion of the union of the blue disks into the union of all disks. 
We prove that we can equivalently study the inclusions of the blue alpha complex into the chromatic alpha complex.

\begin{figure}[htb]
    \centering
    \includegraphics[width=.4\textwidth]{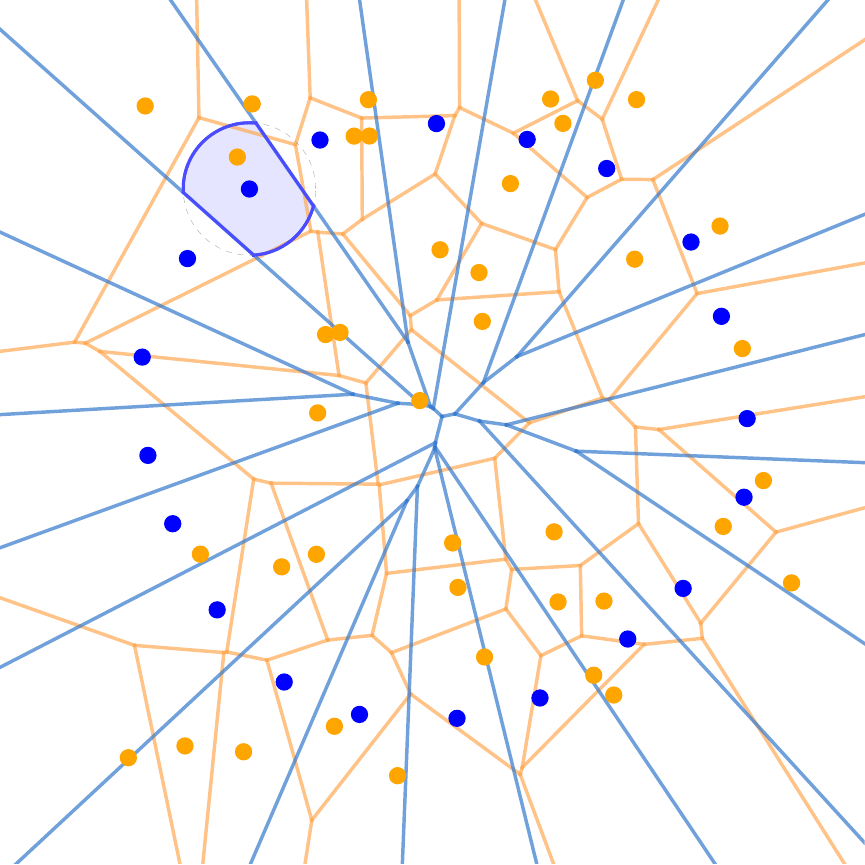}
    \hspace{5mm}
    \includegraphics[width=.4\textwidth]{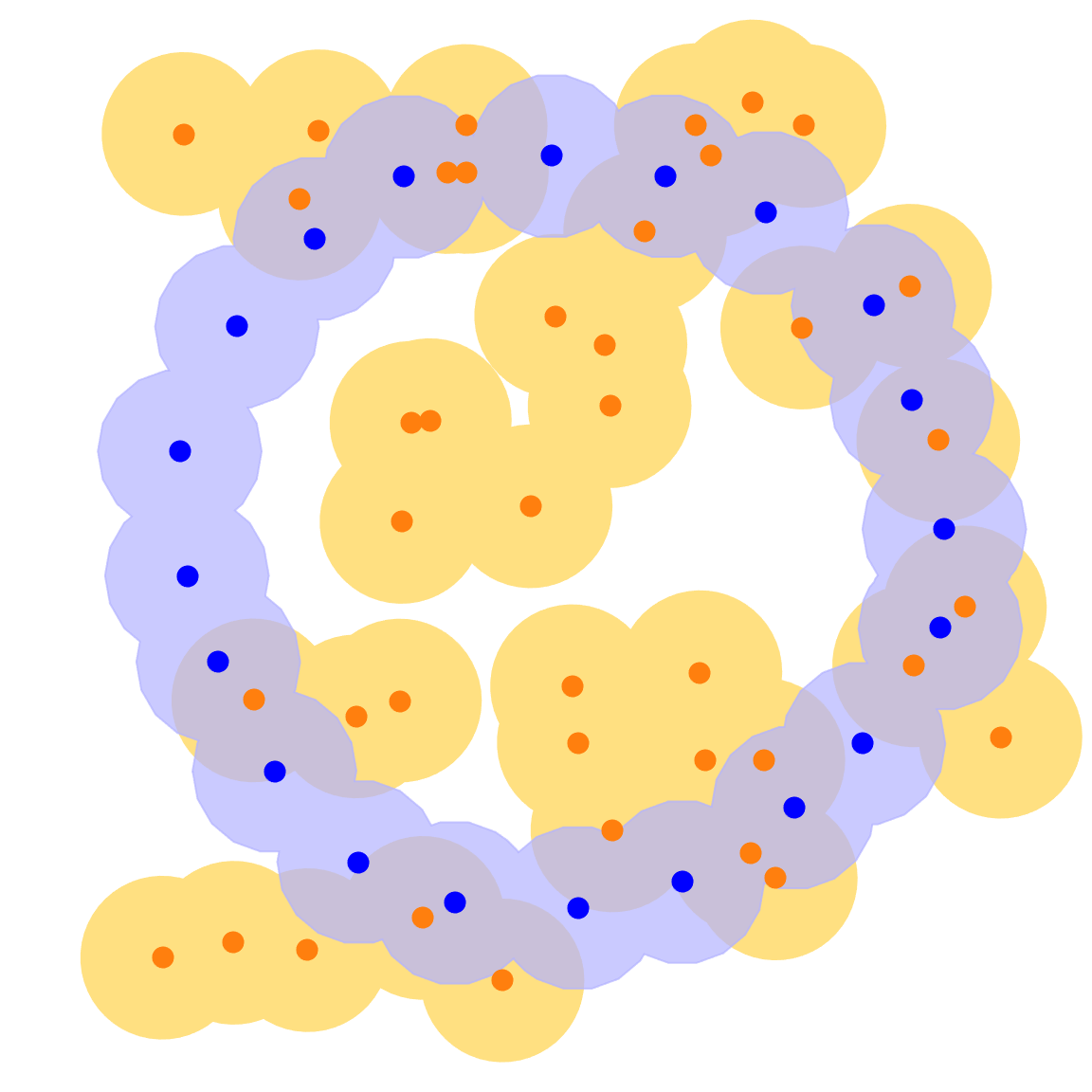}
    \caption{\footnotesize{On the \emph{left}: a chromatic set together with the Voronoi tessellations of the blue and orange points overlaid, and one chromatic Voronoi ball highlighted. 
    On the \emph{right}: the union of blue and the union of orange disks.
    }}
    \label{fig:disks_chromatic}
\end{figure}

\smallskip
For a point $a \in A$ in a chromatic set $\chi \colon A \to \sigma$, we define its \emph{(chromatic) Voronoi ball} of radius $r$ as the intersection of the ball of radius $r$ with the Voronoi domain within its color class:
\begin{align}
  \VBall{r}{a}{\chi} &= \VBall{r}{a}{A_{\chi(a)}} = \Ball{r}{a} \cap \domain{a}{A_{\chi(a)}}.
\end{align}
Let $\nu \subseteq A$ be a set of points. 
Like in the proof of Lemma~\ref{lem:empty_stack_characterization}, we observe that $x$ is the center of an empty stack of radius $r$ passing through $\nu$ iff $x$ is contained in the intersection of the Voronoi balls of radius $r$ centered at the points in $\nu$; that is: $x \in \bigcap_{a \in \nu} \VBall{r}{a}{\chi}$.
This implies that $\Alpha{r}{\chi}$ is isomorphic to the nerve of all Voronoi balls $\VBall{r}{a}{\chi}$, $a\in A$. 
Since the union of the Voronoi balls is the same as the union of the balls, the Nerve Theorem yields the following:

\begin{lemma}\label{lem:homotopy_equivalence}
    $\Alpha{r}{\chi} \simeq \Alpha{r}{A}$, and both are homotopy equivalent to the union of balls, $\bigcup_{a \in A}\Ball{r}{a}$.
\end{lemma}

Unlike the alpha complex, $\Alpha{r}{A}$, the chromatic alpha complex, $\Alpha{r}{\chi}$, contains $\Alpha{r}{A_j}$ as a subcomplex for each color $j \in \sigma$. 
We claim that this reflects the inclusion of the union of $j$-colored balls into the union of all balls, which allows us to study that inclusion on the discrete side. 
Since the complexes involved are defined as nerves, we can use a version of the Nerve Theorem (e.g.\ Theorem B in  \cite{BKRR23}) to show that the inclusions commute with the homotopy equivalences.
\begin{lemma}\label{lem:homotopy_equivalence_commutativity}
    For every color $j \in \sigma$ and radius $r$, the following diagram commutes:\[
    \begin{tikzcd}
        \Alpha{r}{\chi} \arrow[r, leftrightarrow, "\simeq"] &
        \bigcup\nolimits_{a\in A} \Ball{r}{a} \\
        \Alpha{r}{A_j} \arrow[u, hook] \arrow[r, leftrightarrow, "\simeq"] &
        \bigcup\nolimits_{a\in A_j} \Ball{r}{a} \arrow[u, hook]
    \end{tikzcd}\]
\end{lemma}
See Theorem~\ref{thm:homotopy_equivalence_commutativity_general} in Section~\ref{sec:3.4} for a generalization of this statement and a proof.

%%%%%%%%%%%%%%%%%%%%%%%%%%%%%%%%%%%%%%%%%%%%%%%%%%%%%%%%%%%
\subsection{Lifting Construction}
\label{sec:3.3}
%%%%%%%%%%%%%%%%%%%%%%%%%%%%%%%%%%%%%%%%%%%%%%%%%%%%%%%%%%%

Next we recall the lifting construction in \cite{BCDES22,ReBo21}, which sheds light on the structure of the chromatic Delaunay complex and can be used for its computation.
\begin{figure}[htb]
  \centering
  \includegraphics[width=.35\textwidth]{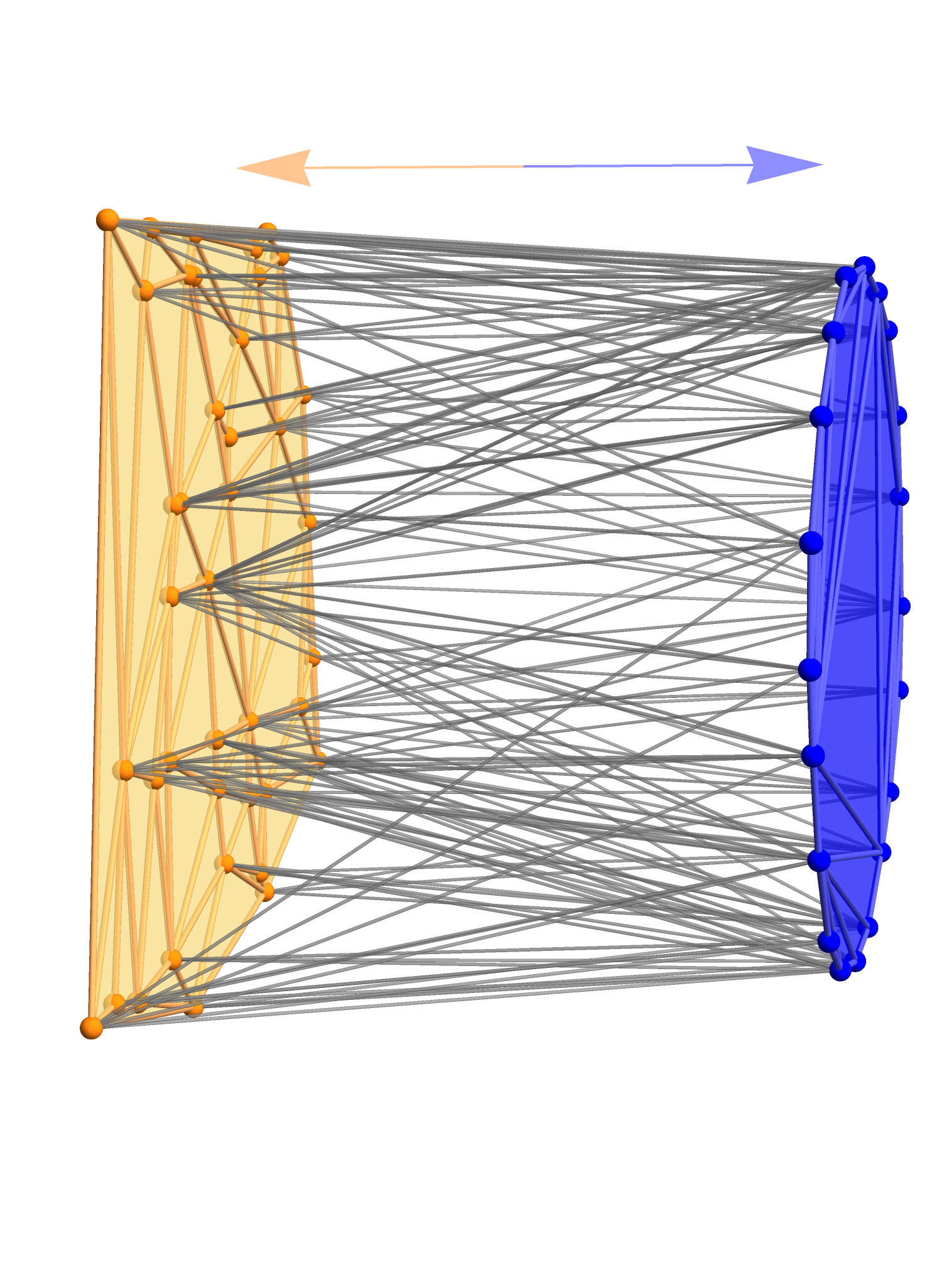}
  \hspace{10mm}
  \includegraphics[width=.35\textwidth]{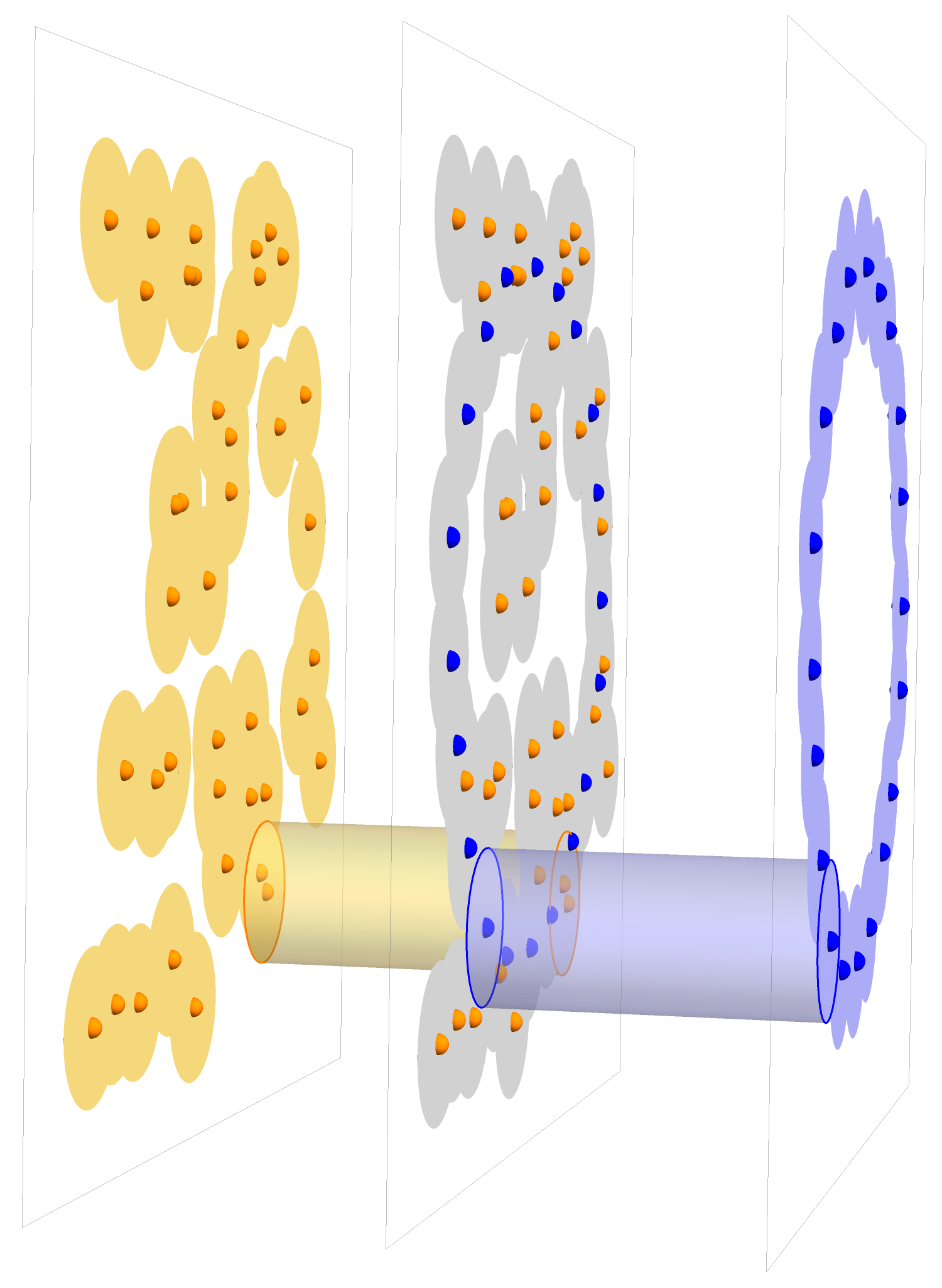}
  \vspace{-4mm}
  \caption{\footnotesize{The chromatic lifting for points in $\Rspace^2$ with two colors.
    \emph{Left:} the chromatic Delaunay complex embedded in three dimensions using the lifted points as vertices; it is isomorphic to the Delaunay complex of the lifted points. 
    The triangles and tetrahedra with more than one color are left unfilled for clarity.
    \emph{Right:} the union of disks of radius $r$, for each color separately on the two sides, and for both colors together in the middle.
    As defined shortly in Section~\ref{sec:3.4}, each plank (solid cylinder) connects a disk in the middle with the same disk on one of the sides.
    We show one such cylinder for each color.}}
  \label{fig:lifitng_2d_2_colors}
\end{figure}

\smallskip
Let $\chi \colon A \to \sigma$ be a chromatic set of points, with $A \subseteq \Rspace^d$ finite and colors $\sigma = \{0,1,\ldots,s\}$.
To separate the colors, we use points $u_0, u_1, \ldots, u_s$ in $\Rspace^s$.
For convenience, we assume these points are the vertices of the standard $s$-simplex embedded in $\Rspace^s$, but the construction would work for any affinely independent collection of $s+1$ points.
Let $\Rspace^d$ and $\Rspace^s$ be spanned by the first $d$ and last $s$ coordinate vectors of $\Rspace^{d+s}$, respectively; that is: we treat $\Rspace^d$ and $\Rspace^s$ as orthogonal subspaces of $\Rspace^{d+s}$.
Write $A_j = \chi^{-1} (j)$, and set $\Lift{A}{j} = A_j + u_j$ for $0 \leq j \leq s$.
Then $\Lift{A}{} = \Lift{A}{0} \cup \Lift{A}{1} \cup \ldots \cup \Lift{A}{s}$ is a finite set in $\Rspace^{d+s}$, and we call it the \emph{chromatic lifting of $\chi$}.
We claim that the chromatic Delaunay complex, $\Delaunay{}{\chi}$, is the standard Delaunay complex, $\Delaunay{}{\Lift{A}{}}$, after identifying the lifted vertices with their original counterparts.
This gives us a straightforward way to compute $\Delaunay{}{\chi}$---we lift the points and use a standard algorithm to compute the Delaunay complex---and an intuitive view on the structure of $\Delaunay{}{\chi}$; see Figures~\ref{fig:lifitng_2d_2_colors} and \ref{fig:lifitng_1d_3_colors}. 
The claimed equality is easy to prove when we use the characterizations via empty stacks and empty spheres.
\begin{lemma}\label{lem:stacks_and_lifted_spheres}
  Let $\chi \colon A \to \sigma$ be a chromatic point set in $\Rspace^d$ and $\Lift{A}{} \subseteq \Rspace^{d+s}$ be its chromatic lifting. 
  There exists an empty stack of $s+1$ $(d-1)$-spheres that pass through the points in $\nu \subseteq A$ iff there exists an empty $(d+s-1)$-sphere that passes through the lifted points in $\Lift{\nu}{} = \left\{ a+u_{\chi(a)} \mid a\in\nu \right\}$.
\end{lemma}
\begin{proof}
  There is a 1-to-1 correspondence between the stacks of concentric $(d-1)$-spheres in $\Rspace^d$ and the $(s+d-1)$-spheres in $\Rspace^{d+s}$ that have a non-empty intersection with $\Rspace^d + u_j$ for each $j \in \sigma$. 
  Indeed, if $S$ is such an $(s+d-1)$-sphere, then we get the stack by setting $S_j$ to be the intersection of $S$ with $\Rspace^d+u_j$ projected back to $\Rspace^d$, for each $j \in \sigma$, and if $(S_j)_{j \in \sigma}$ is a stack, its spheres share a common center, so we can find a sphere $S$ whose intersection with $\Rspace^d+u_j$ is $S_j + u_j$, for each $j \in \sigma$.

  This correspondence implies that $S$ is empty of points in $\Lift{A}{}$ and passes through the points of $\Lift{\nu}{}$ iff $(S_j)_{j \in \sigma}$ is empty of points in $\chi$ and passes through the points in $\nu$.
\end{proof}

Lemmas~\ref{lem:empty_sphere_characterization}, \ref{lem:empty_stack_characterization}, and \ref{lem:stacks_and_lifted_spheres} imply the following result.
\begin{corollary}\label{cor:chromatic_delaunay_is_delaunay}
    Let $\chi \colon A \to \sigma$ be a chromatic point set and $\Lift{A}{}$ its chromatic lifting.
    Then $\Delaunay{}{\chi}$ and $\Delaunay{}{\Lift{A}{}}$ are isomorphic, with the isomorphism defined by mapping $a \in A$ to $\Lift{a}{} = a+u_{\chi(a)}$.
\end{corollary}

\begin{figure}[htb]
    \centering
    \includegraphics[width=.22\textwidth]{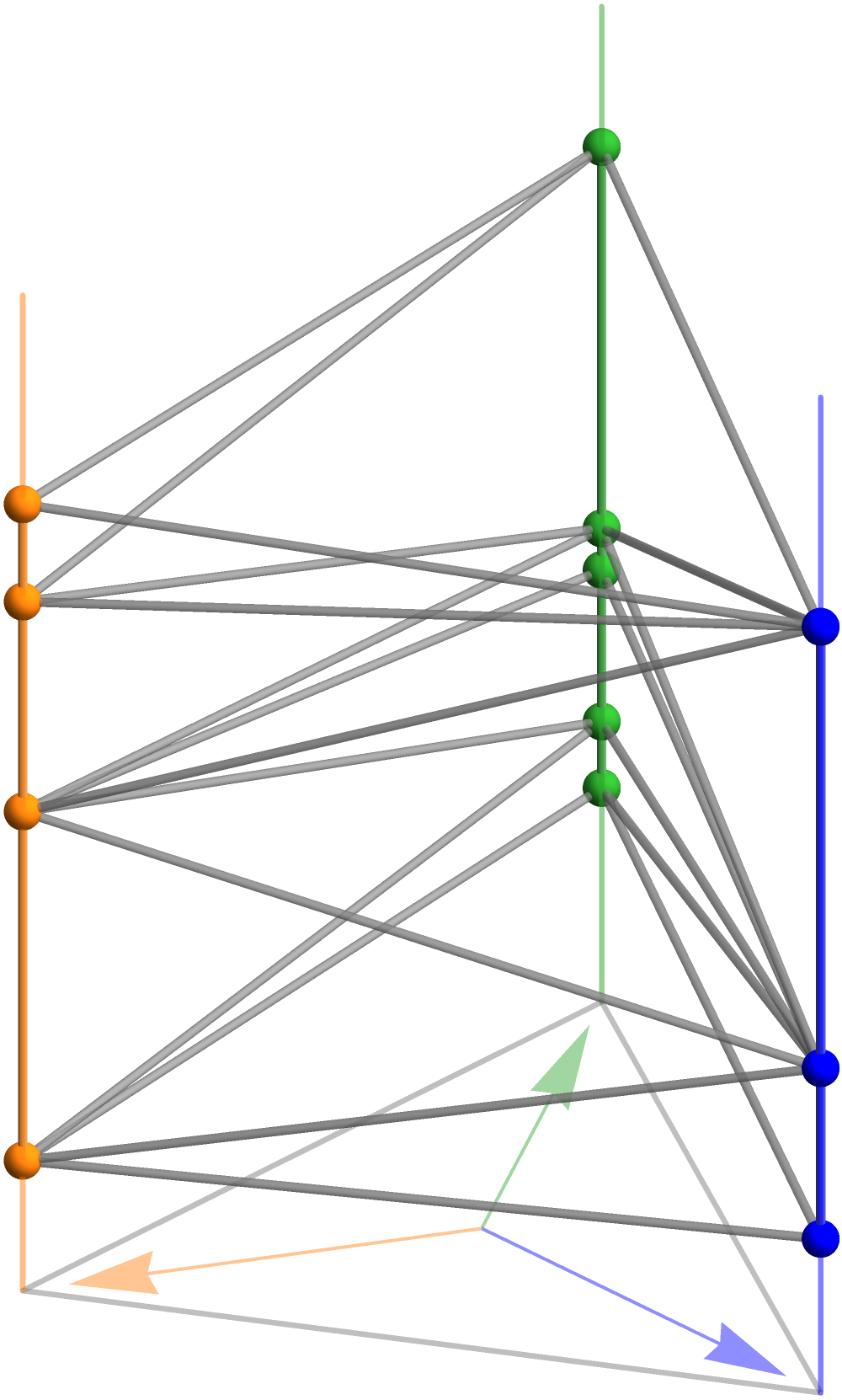}
    \hspace{20mm}
    \includegraphics[width=.22\textwidth]{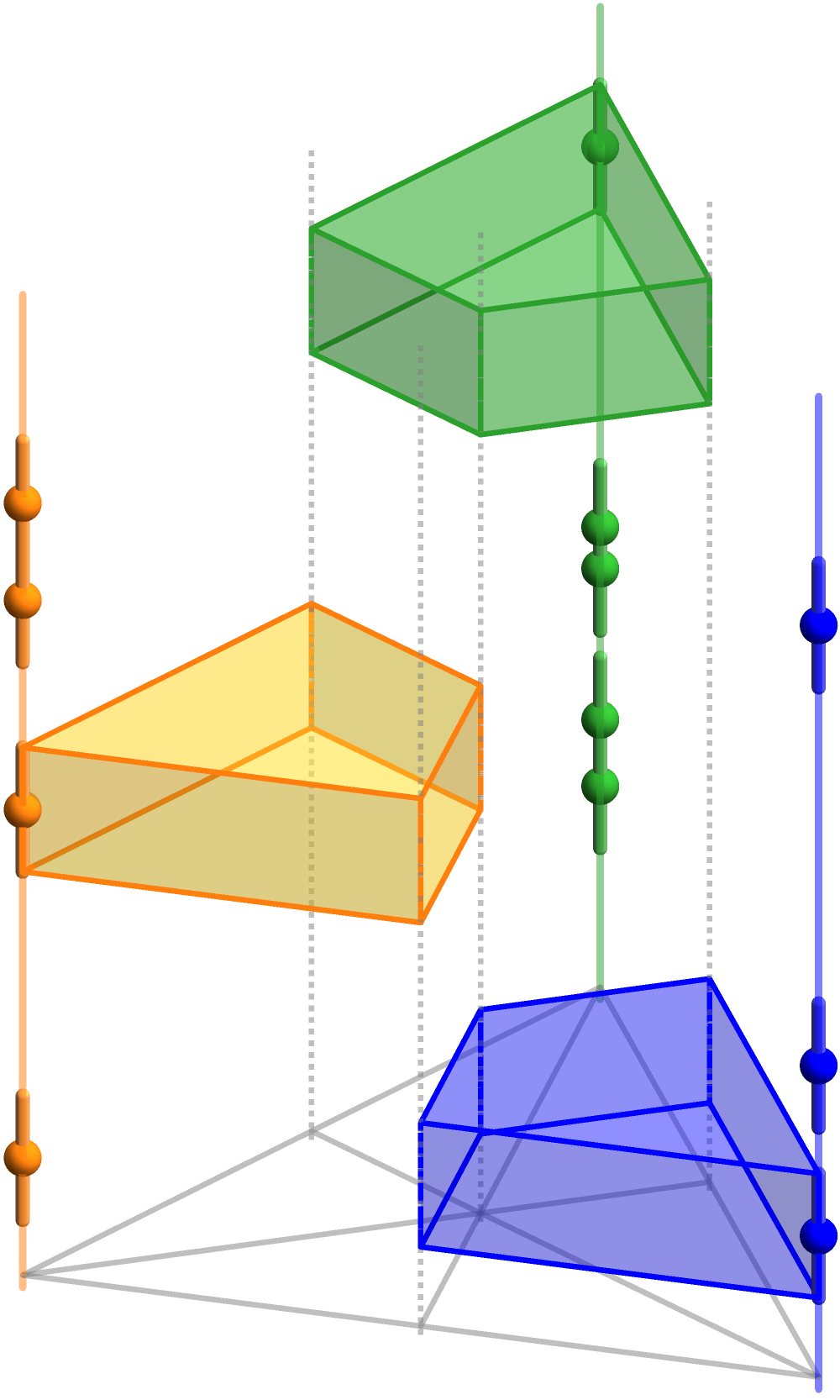}
    % \vspace{-3mm}
    \caption{\footnotesize{The chromatic lifting for points in $\Rspace^1$ with three colors.
    \emph{Left:} the chromatic Delaunay complex embedded in three dimensions using the lifted points as vertices; it is isomorphic to the Delaunay complex of the lifted points.
    The triangles and tetrahedra are left unfilled for clarity.
    \emph{Right:} the union of segments of length $2r$, for each color separately along the three lines.
    Note the barycentric subdivision of the triangle at the \emph{bottom}, and the parallel lines emanating from the midpoints of the edges and the barycenter of the triangle.
    As defined shortly, each plank is a quadrangular prism that connects one of the intervals with its projections to three of these parallel lines.
    We show one such prism for each color.}}
    \label{fig:lifitng_1d_3_colors}
\end{figure}

\noindent \emph{Remark.}
The lifting construction provides an interesting insight independent of the focus of the paper.
In machine learning, categorical data is often vectorized via \emph{one-hot encoding}: the $j$-th out of $s+1$ categories is represented by the canonical vector $e_j\in\Rspace^{s+1}$ with one at the $j$-th place and zeros everywhere else.
With the minor modification of embedding the standard $s$-simplex in $\Rspace^{s+1}$ instead of $\Rspace^{s}$, the chromatic lifting can be interpreted as concatenating the spatial coordinate of each point with the one-hot encoding of its color. 
Corollary~\ref{cor:chromatic_delaunay_is_delaunay} implies that endowing this embedding with the Euclidean metric has concrete geometric meaning.

%%%%%%%%%%%%%%%%%%%%%%%%%%%%%%%%%%%%%%%%%%%%%%%%%%%%%%%%%%%
\subsection{Chromatic Subcomplexes}
\label{sec:3.4}
%%%%%%%%%%%%%%%%%%%%%%%%%%%%%%%%%%%%%%%%%%%%%%%%%%%%%%%%%%%

The inclusion of one color into all others, $\Alpha{r}{A_j} \hookrightarrow \Alpha{r}{\chi}$, has two major drawbacks:  it effectively uses only two colors ($j$ versus the rest), which is too little information to detect any of the tri-chromatic patterns in Figure~\ref{fig:patterns-filled}, and it is asymmetric by construction.
To overcome these issues, we call $\sigma$ the \emph{color simplex}, write $\Sigma$ for the complex of faces of $\sigma$, and for any subcomplex, $\Gamma \subseteq \Sigma$, define the \emph{$\Gamma$-subcomplex} of the chromatic alpha complex:
\begin{align}
  \AlphaSub{r}{\chi}{\Gamma} &= \left\{ \nu \in \Alpha{r}{\chi} \mid \chi(\nu) \in \Gamma \right\}.
\end{align}
For example, if $\Gamma$ consists of all subsets of size $t+1$ or less, then $\AlphaSub{r}{\chi}{\Gamma}$ is the collection of all simplices whose vertices have at most $t+1$ different colors, and we call this the \emph{$(t+1)$-chromatic subcomplex} of the alpha complex.
This choice of $\Gamma$ is symmetric, as it prefers no colors over any other colors.
For $t=0$, $\AlphaSub{r}{\chi}{\Gamma}$ is the disjoint union of the $s+1$ mono-chromatic alpha complexes, and we will see shortly that studying the inclusion $\AlphaSub{r}{\chi}{\Gamma} \hookrightarrow \Alpha{r}{\chi}$ is equivalent to studying the natural map
\begin{align}
  \left[ \bigcup\nolimits_{a \in A_0} \Ball{r}{a} \right] \sqcup \left[ \bigcup\nolimits_{a \in A_1} \Ball{r}{a} \right] \sqcup \ldots \sqcup \left[ \bigcup\nolimits_{a\in A_s} \Ball{r}{a} \right] ~~\longrightarrow~~ \bigcup\nolimits_{a\in A} \Ball{r}{a},
\end{align}
acting as inclusion on each color.
For example, this map captures the loops composed of points of \emph{any one} of the colors that are filled by points of the \emph{other} colors: when the homology functor is applied, such a loop becomes a non-trivial homology class that maps to zero.

\smallskip
We need definitions to gain intuition and give meaning to $\Gamma$-subcomplexes for $\Gamma$ more general than just the vertices in the color simplex.
We will make use of the chromatic lifting defined in the previous section, and instead of growing balls around the points, we grow what we call planks around the lifted points, which are balls in $\Rspace^d$ extruded into the $s$ extra color dimensions.

\smallskip
To begin, we fix a chromatic lifting of $\chi: A \rightarrow\sigma$, as in Section~\ref{sec:3.3}, with points $u_0, u_1, \ldots, u_s \in \Rspace^s$.
With a slight abuse of notation, we write $\Sigma$ for the simplicial complex that consists of all simplices spanned by any subset of these $s+1$ points, and $|\Sigma|$ for its underlying space.
The \emph{barycenter} of a simplex is the average of its vertices.
A \emph{chain} in $\Sigma$ is a nested sequence of its simplices, which gives a sequence of points (the barycenters), and taking their convex hull, we get a simplex in the \emph{barycentric subdivision} of $\Sigma$, denoted $\Sd{\Sigma}$; see Figure~\ref{fig:lifitng_1d_3_colors} where we see the barycentric subdivision of a triangle at the bottom of the right panel.
The \emph{star} of $u_j$ in $\Sd{\Sigma}$, denoted $\Star{j}{\Sd{\Sigma}}$, is the underlying space of all simplices in $\Sd{\Sigma}$ whose corresponding chains in $\Sigma$ contain only simplices that share $u_j$.
For example, the star of a vertex of a barycentrically subdivided triangle is the convex quadrangle that is the union of the two triangles spanned by the vertex, the barycenters of the two incident edges, and the barycenter of the triangle in $\Sigma$; see Figure~\ref{fig:lifitng_1d_3_colors}.
Note that a collection of vertex stars restricted to $|\Gamma|$ have a non-empty common intersection iff their colors form a simplex in $\Gamma$.
\begin{figure}[htb]
  \centering
  \raisebox{-.07\height}{\includegraphics[width=.25\textwidth]{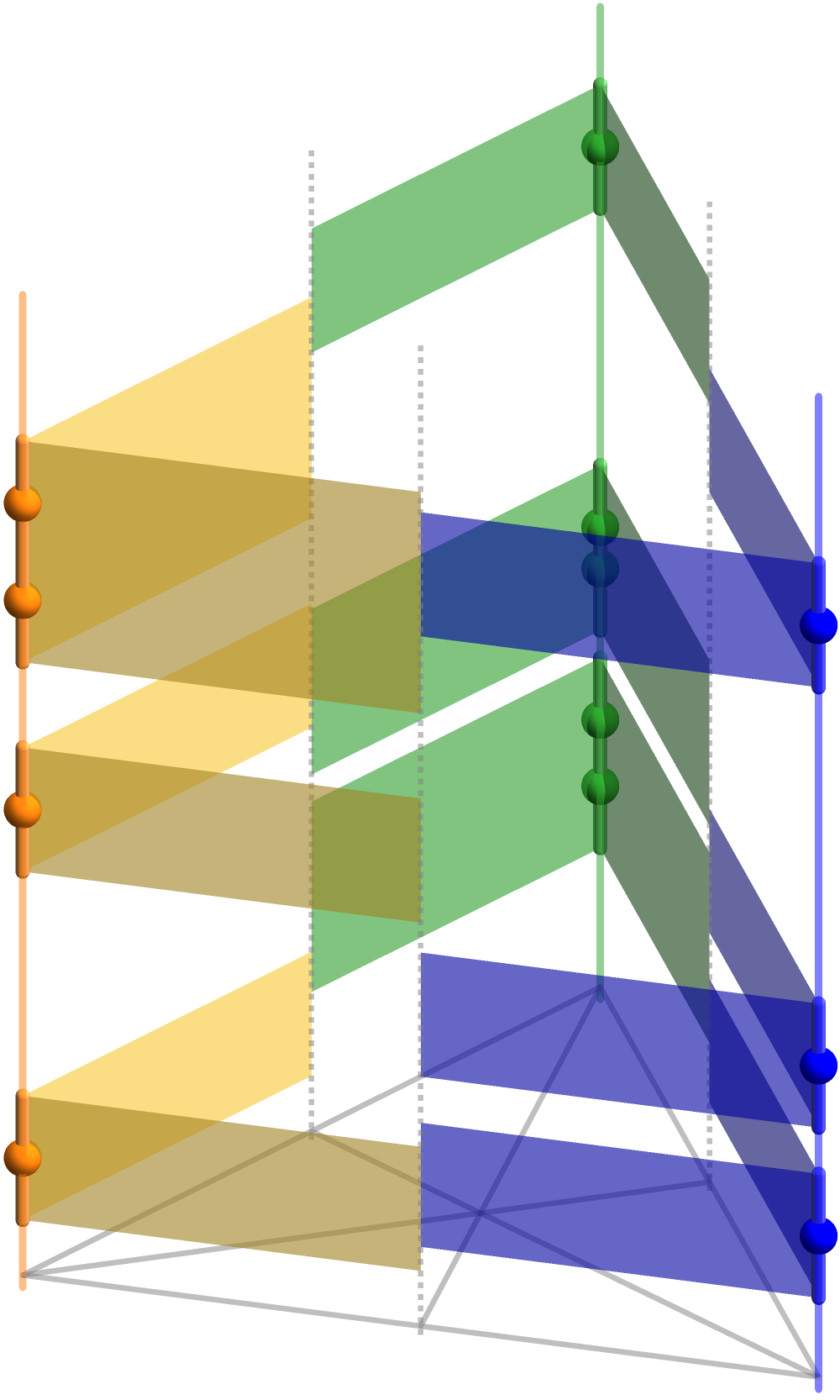}} \hspace{0.3in} %\hfill
  \raisebox{0\height}{\includegraphics[width=.62\textwidth]{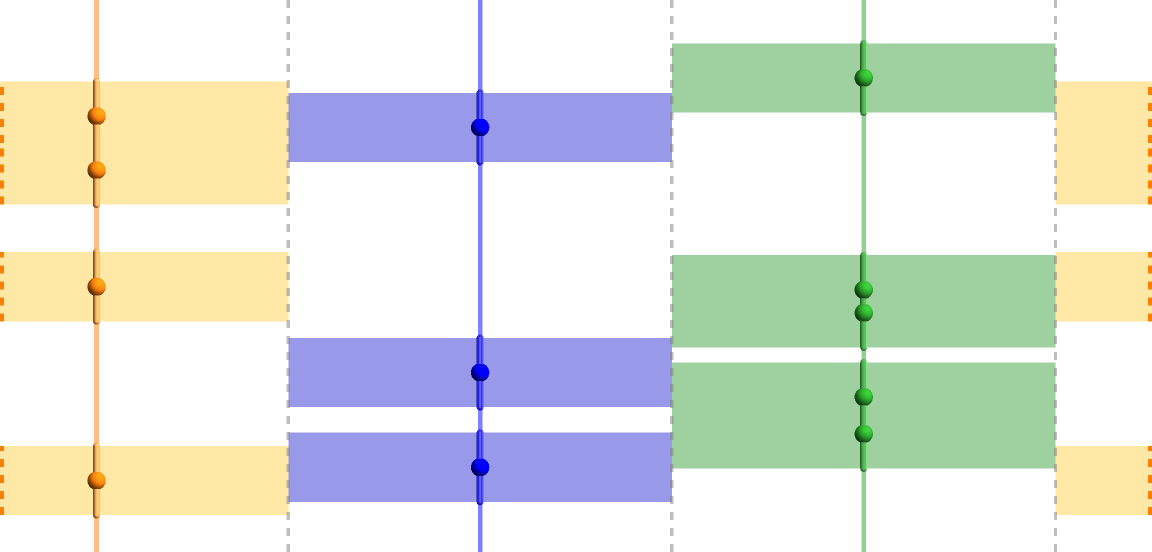}}
  \caption{\footnotesize The union of bi-chromatic planks for a tri-chromatic point set in $\Rspace^1$.
    It is homotopy equivalent to the bi-chromatic subcomplex of the corresponding chromatic alpha complex.
    On the \emph{left}, we show the planks in the sides of the triangular prism erected on top of the barycentrically subdivided color triangle.
    On the \emph{right}, the three sides of the triangular prism are unfolded into the plane, and the planks are glued along the \emph{orange dashed} lines.
    A similar unfolding one dimension higher helps us to understand the situation for the $2$-dimensional data in Figure~\ref{fig:bi-chromatic_subcomplex_2d_3_colors}.}
  \label{fig:bi-chromatic_subcomplex_1d_3_colors}
\end{figure}
\begin{figure}[htb]
  \centering
  \includegraphics[width=.95\textwidth]{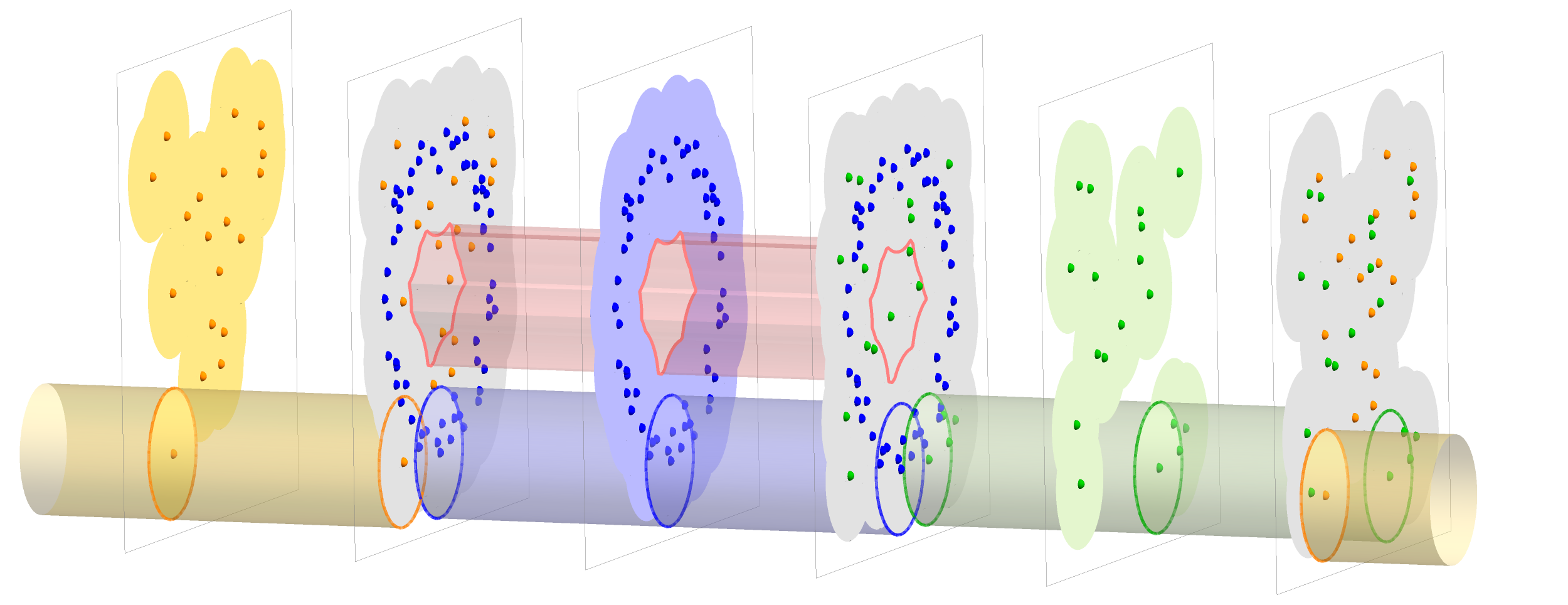}
  \caption{\footnotesize The union of bi-chromatic planks for a tri-chromatic point set in $\Rspace^2$.
    For clarity, only one plank per color is shown.
    Every blue, green; and yellow disk is connected to its gray counterparts via a solid cylinder.
    By construction, the planks are subsets of the boundary faces of a $4$-dimensional triangular prism.
    Similar to Figure~\ref{fig:bi-chromatic_subcomplex_1d_3_colors}, we unfold the $3$-dimensional boundary so we can illustrate the planks in $\Rspace^3$, as shown.
    Observe the highlighted $2$-hole in the middle:  this is a topological feature that captures a loop created by one color (\emph{blue}) and filled by each of the other two colors.
    This is one variant of the pattern of type~1+2 from Figure~\ref{fig:patterns-filled}.}
  \label{fig:bi-chromatic_subcomplex_2d_3_colors}
\end{figure}

\smallskip
With these notions, we are ready to define the \emph{planks}, which are instrumental to relate the union of balls with the subcomplexes of the chromatic alpha complex.
We have three progressively smaller variants:  the first for the entire $\Sigma$ (Figure~\ref{fig:lifitng_1d_3_colors}), the second restricted to a subcomplex $\Gamma \subseteq \Sigma$ (Figure~\ref{fig:bi-chromatic_subcomplex_1d_3_colors}), and the third further restricted to within the mono-chromatic Voronoi domains:
\begin{align}
  \Plank{r}{a}{\chi}           &= \Ball{r}{a} \times \Star{\chi(a)}{\Sd{\Sigma}}, 
    \label{eqn:plank1} \\
  \CPlank{r}{a}{\chi}{\Gamma}  &= \Ball{r}{a} \times \bigl( \Star{\chi(a)}{\Sd{\Sigma}} \cap |\Gamma| \bigr), 
    \label{eqn:plank2} \\
  \VCPlank{r}{a}{\chi}{\Gamma} &= \VBall{r}{a}{\chi} \times \bigl( \Star{\chi(a)}{\Sd{\Sigma}} \cap |\Gamma| \bigr).
    \label{eqn:plank3}
\end{align}
The result we are about to prove follows from the Nerve Theorem, which has a long history and many versions, which vary in assumptions and generality.
The historically first instances appeared in the papers by Leray~\cite{Ler45}, Borsuk~\cite{Bor48}, and Weil~\cite{Wei52}.
We use a recent version \cite[Thm B/3.11]{BKRR23}, which, in particular, also talks about diagrams induced by inclusions.
\begin{theorem}
  \label{thm:homotopy_equivalence_commutativity_general}
  Let $\chi \colon A \to \sigma$ be a chromatic point set, and $\Gamma \subseteq \Sigma$ a subcomplex of the color simplex.
  \begin{enumerate}
    \item For every radius $r$, the union of $\Gamma$-planks is homotopy equivalent to the $\Gamma$-subcomplex of the chromatic alpha complex:
    $\bigcup\nolimits_{a \in A} \CPlank{r}{a}{\chi}{\Gamma} \simeq \AlphaSub{r}{\chi}{\Gamma}$.
    \item \label{thm:homotopy_equivalence_commutativity_general:cd} %
    The homotopy equivalences commute with inclusions.
    Specifically, if $r \leq R$ and $\Gamma \subseteq \Delta \subseteq \Sigma$, then the following two diagrams commute:
    \[
      \begin{tikzcd}
        \bigcup\nolimits_{a \in A} \CPlank{R}{a}{\chi}{\Gamma} \arrow[r, leftrightarrow, "\simeq"] &
            \AlphaSub{R}{\chi}{\Gamma} \\
            \bigcup\nolimits_{a \in A} \CPlank{r}{a}{\chi}{\Gamma} \arrow[u, hook] \arrow[r, leftrightarrow, "\simeq"] &
            \AlphaSub{r}{\chi}{\Gamma} \arrow[u, hook]
        \end{tikzcd}\hspace{8mm}
        \begin{tikzcd}
            \bigcup\nolimits_{a \in A} \CPlank{r}{a}{\chi}{\Delta} \arrow[r, leftrightarrow, "\simeq"] &
            \AlphaSub{r}{\chi}{\Delta} \\
            \bigcup\nolimits_{a \in A} \CPlank{r}{a}{\chi}{\Gamma} \arrow[u, hook] \arrow[r, leftrightarrow, "\simeq"] &
            \AlphaSub{r}{\chi}{\Gamma} \arrow[u, hook]
        \end{tikzcd}
    \]
    \end{enumerate}
\end{theorem}
\begin{proof}
    Each union of planks of the type defined in \eqref{eqn:plank2} remains unchanged if we replace them by planks of the type defined in \eqref{eqn:plank3}. 
    We claim that for a fixed $r > 0$ and $\Gamma \subseteq \Sigma$, the nerve of the latter type of planks is isomorphic to the $\Gamma$-subcomplex of the chromatic alpha complex:
    \begin{align}
    \Nerve{\{ \VCPlank{r}{a}{\chi}{\Gamma} \mid a \in A \}} \cong \AlphaSub{r}{\chi}{\Gamma}.
    \end{align}
    Indeed, $\nu \subseteq A$ has intersecting such planks iff their Voronoi balls intersect and the stars of the colors restricted to $|\Gamma|$ intersect.
    As argued before Lemma~\ref{lem:homotopy_equivalence}, the former happens iff $\nu \in \Alpha{r}{\chi}$, and as mentioned earlier in this subsection, the latter happens iff $\chi(\nu) \in \Gamma$.
    Together, these are the two defining conditions for $\nu \in \AlphaSub{r}{\chi}{\Gamma}$.
    Provided the assumptions for the Nerve Theorem \cite[Thm B]{BKRR23} are satisfied, we thus have
    \begin{align}
    \bigcup\nolimits_{a \in A} \CPlank{r}{a}{\chi}{\Gamma} 
      &= \bigcup\nolimits_{a \in A} \VCPlank{r}{a}{\chi}{\Gamma} 
      \simeq \Nerve{\{ \VCPlank{r}{a}{\chi}{\Gamma} \mid a \in A \}} \cong \AlphaSub{r}{\chi}{\Gamma},
    \end{align}
    and these homotopy equivalences commute with inclusions.
    It remains to show that the two assumptions for the Nerve Theorem \cite[Thm B]{BKRR23} are indeed satisfied: ($a$) the sets are closed and convex, and ($b$) every non-empty intersection of sets contains a point that is preserved in all inclusions.

    \smallskip
    The planks $\VCPlank{r}{a}{\chi}{\Gamma}$ are not necessarily convex, but we can replace each by its convex hull in $R^{d+s}$ without changing any common intersection of two or more of them.
    To satisfy ($b$), we need a point in every non-empty common intersection of planks that is preserved by all relevant inclusions in the claimed diagrams.
    For $\VCPlank{r}{a}{\chi}{\Gamma}$ we take the point $\Lift{p}{a} = a + u_{\chi(a)}$.
    More generally, for a collection, $\nu \subseteq A$, we find the radius, $r$, and the point, $p_\nu$, such that $\bigcap_{a \in \nu} \VBall{r}{a}{\chi} = \{p_\nu\}$, and we set $\Lift{p}{\nu}$ equal to $p_\nu$ plus the barycenter of the simplex spanned by the $a_j$ with $j \in \nu$.
    Since $\Lift{p}{\nu}$ is the first point that appears in the common intersection of the growing planks, $\Lift{p}{\nu}$ is also contained in the common intersection if we substitute $R \geq r$ for $r$ or $\Delta \supseteq \Gamma$ for $\Gamma$.
\end{proof}

A careful analysis of the proof of \cite[Thm B]{BKRR23} shows that in our case we also get commuting homotopy equivalences for quotient spaces. 
Viewing the vertical inclusions in (\ref{thm:homotopy_equivalence_commutativity_general:cd}) as topological pairs, the horizontal maps are homotopy equivalences of pairs---that is, the homotopies preserve the subspaces. 
This implies that for $\Gamma\subseteq\Delta$ we can equivalently study the filtration of topological pairs $\left(\bigcup\nolimits_{a \in A} \CPlank{r}{a}{\chi}{\Delta},\  \bigcup\nolimits_{a \in A} \CPlank{r}{a}{\chi}{\Gamma}\right)$ and $\left(\AlphaSub{r}{\chi}{\Delta},\  \AlphaSub{r}{\chi}{\Gamma}\right)$. The same holds for the corresponding quotient spaces, which is particularly relevant for the study of the relative persistent homology in Section~\ref{sec:5}. For more details, see Appendix~\ref{apx:he_pairs}.

%%%%%%%%%%%%%%%%%%%%%%%%%%%%%%%%%%%%%%%%%%%%%%%%%%%%%%%%%%%
\subsection{Algorithm for Chromatic Alpha Complex}
\label{sec:3.5}
%%%%%%%%%%%%%%%%%%%%%%%%%%%%%%%%%%%%%%%%%%%%%%%%%%%%%%%%%%%

This section discusses how to compute the chromatic alpha complex. 
The procedure has two parts: first the construction of the chromatic Delaunay complex, and second the computation of the radius function on this complex. 
Following Corollary~\ref{cor:chromatic_delaunay_is_delaunay}, the chromatic Delaunay complex is computed as a standard Delaunay complex of the chromatic lifting of the input chromatic point set;
see \cite{CGAL17} for fast and widely available code and \cite{BCDES22} for bounds on the complexity for chromatic point sets.\footnote{Some implementations do not allow many points lying on the same affine subspace, and for others it slows down the computations.
An alternative is to slightly perturb the lifted points, compute the Delaunay complex, and then only keep the down-set of the maximal simplices that span all colors.}
Below we describe the computation of the radius function, and argue that for generic data (see Definition~\ref{dfn:conventional_genericity}) in fixed dimension and with a constant number of colors, the algorithm takes time linear in the size of the chromatic Delaunay complex.

\smallskip
For the algorithm, it is convenient to work with the squared radius, which we use for the remainder of this section. 
We fix a chromatic point set, $\chi \colon A\to \sigma$, with $A \subseteq \Rspace^d$ and $\card{\sigma}=s+1$. 
Let $\nu\in\Delaunay{}{\chi}$ be a simplex, $\tau=\chi(\nu)$ its colors, and $P = \bigcap_{v\in\nu} \domain{v}{A_{\chi(v)}}$ the intersection of the chromatic Voronoi domains of its vertices. 
Then the squared radius of the smallest empty stack that passes through the points in $\nu$---the value of $\nu$ under the squared radius function---is the result of convex optimization:
\begin{equation}\label{eq:min_max_radius}
    \Radiusf^2(\nu) = \min_{x \in P} \max_{v\in\nu} \|x-v\|^2.
\end{equation}
We write the convex optimization as an algorithm that avoids the explicit construction of $P$ and can be implemented using exact arithmetic.
For each $j \in \tau$, consider the affine subspace $E_j \subseteq \Rspace^d$ consisting of the points $x$ equidistant to all points in $\nu$ with color $j$, and the function $e_j \colon E_j \to \Rspace$ that maps $x \in E_j$ to the squared distance to any point of $\nu$ with color $j$. 
Observe that the intersection of these affine subspaces, $E = \bigcap_{j \in \tau} E_j$, is the smallest affine subspace that contains $P$.
The pointwise maximum function, $e \colon E \to \Rspace$ defined by $e(x) = \max_{j \in \tau} e_j(x)$, is a strictly convex function with unique minimum, $y \in E$.
If $y \in P$, then $e(y) = \min_{x \in P} e(x)$, so $\Radiusf^2(\nu)=e(y)$.
Otherwise, $\min_{x \in P}e(x)$ is attained on the boundary of $P$, which implies that $\Radiusf^2(\nu)$ is the smallest $\Radiusf^2(\mu)$ over all cofaces $\mu$ of $\nu$ in $\Delaunay{}{\chi}$.
Note that to query whether $y\in P$, we only need to check whether the stack centered in $y$ that passes through $\nu$ is empty, which is easy.

\smallskip
To formalize the algorithm, we write $S(x,r)$ for the $(d-1)$-sphere with radius $r$ and center $x \in \Rspace^d$.
The algorithm visits the simplices of $\Delaunay{}{\chi}$ in the order of decreasing dimension:
\begin{tabbing}
  m\=m\=m\=m\=m\=m\=m\=m\=m\=m\=\kill
  \> \> {\tt for} $p=s+d$ {\tt downto} $0$  {\tt do} \\*
  \> \> \> {\tt for each} $p$-simplex $\nu \in \Delaunay{}{\chi}$ {\tt do} \\*
  \> \> \> \> {\sc Step 1:} \= construct the affine spaces $E_j$, $j \in \chi(\nu)$, and $E = \bigcap_{j \in \chi(\nu)} E_j$; \\*
  \> \> \> \> {\sc Step 2:} \> construct $e_j \colon E_j \to \Rspace$, for each $j \in \chi(\nu)$, and $e \colon E \to \Rspace$; \\*
  \> \> \> \> {\sc Step 3:} \> find the unique minimum of $e$, the point $y \in E$; \\*
  \> \> \> \> {\sc Step 4:} \> {\tt if} \= $S(y, \sqrt{e_j(y)})$ is empty of $A_j$, for each $j \in \chi(\nu)$ \\*
  \> \> \> \> \> \> {\tt then} $\Radiusf(\nu) = e(y)$ \\*
  \> \> \> \> \> \> {\tt else} $\Radiusf(\nu) = \min \{ \Radiusf(\mu) \mid \nu \subseteq \mu, \mu\in\Delaunay{}{\chi}, \dime{\mu} = p+1\}$ \\*
  \> \> \> \> \> {\tt endif} \\*
  \> \> \> {\tt endfor} \\*
  \> \> {\tt endfor}
\end{tabbing}
In particular, for $p=0$ the algorithm sets $\Radiusf (a) = 0$ for every vertex $a \in \Delaunay{}{\chi}$.
Assume now that the chromatic lifting of the points is generic; see Definition~\ref{dfn:conventional_genericity}.
Assuming constant $d$ and $s$, every step takes only constant time, except Step~4, which loops over cofaces both for checking emptiness and for determining the coface with smallest squared radius.
We will see shortly that Step~4 takes constant time in the amortized sense.
\begin{theorem}
  \label{thm:chromatic_radius_function_in_linear_time}
  Let $\chi \colon A \to \Rspace$ be a chromatic point set, with $A \subseteq \Rspace^d$ finite and generic, set $s = \dime{\sigma}$, and let $m$ be the number of simplices in $\Delaunay{}{\chi}$.
  Assuming $d$ and $s$ are constants, $\Radiusf \colon \Delaunay{}{\chi} \to \Rspace$ can be computed in $O(m)$ time.
\end{theorem}
\begin{proof}
  The body of the algorithm is executed once for each simplex $\nu\in\Delaunay{}{\chi}$.
  It is easy to see that Steps~1 and 2 take only constant time each.
  To see the same for Step~3, we observe that $y$ is the center of the smallest sphere that encloses all vertices of $\nu$ and whose center lies in $E$.
  The latter condition can be enforced by reflecting $\nu$ through $E$ and adding its vertices to the points to be enclosed.
  The number of points to be considered is at most $2(d+s+1) = O(1)$, so we can compute the smallest enclosing sphere in constant time with the miniball algorithm \cite{Wel91} or indeed a brute-force algorithm that checks all possibilities.
  In Step~4 we loop through the cofaces of $\nu$, both for checking emptiness of stacks, and to find the minimum radius in the \texttt{else}-clause.
  There can be many such cofaces for any individual $\nu$, but any $(p+1)$-simplex $\mu \in \Delaunay{}{\chi}$ is a coface of only $p+2$ $p$-simplices.
  Since $p+2 \leq d+s+1$, this implies that in total we run at most $O\big((d+s+1)\cdot m\big) = O(m)$ tests.
\end{proof}

%%%%%%%%%%%%%%%%%%%%%%%%%%%%%%%%%%%%%%%%%%%%%%%%%%%%%%%%%%%
%%%%%%%%%%%%%%%%%%%%%%%%%%%%%%%%%%%%%%%%%%%%%%%%%%%%%%%%%%%
\section{Chromatic Radius Functions are Generalized Discrete Morse}
\label{sec:4}
%%%%%%%%%%%%%%%%%%%%%%%%%%%%%%%%%%%%%%%%%%%%%%%%%%%%%%%%%%%
%%%%%%%%%%%%%%%%%%%%%%%%%%%%%%%%%%%%%%%%%%%%%%%%%%%%%%%%%%%

As before, we assume that $\chi \colon A \to \sigma$ is a chromatic point set with finite $A \subseteq \Rspace^d$ and $\sigma = \{0,1,\ldots,s\}$, and we write $\Radiusf \colon \Delaunay{}{\chi} \to \Rspace$ for the chromatic radius function.
By Definition~\ref{dfn:generalised_discrete_morse_function}, we call $\Radiusf$ a generalized discrete Morse function if every level set is a union of disjoint maximal intervals.
The purpose of this section is to prove that $\Radiusf$ is indeed generalized discrete Morse, provided $A$ satisfies a genericity condition, which we introduce first.

%%%%%%%%%%%%%%%%%%%%%%%%%%%%%%%%%%%%%%%%%%%%%%%%%%%%%%%%%%%
\subsection{Chromatic Genericity}
\label{sec:4.1}
%%%%%%%%%%%%%%%%%%%%%%%%%%%%%%%%%%%%%%%%%%%%%%%%%%%%%%%%%%%

Definition~\ref{dfn:conventional_genericity} requires from a mono-chromatic set that any $p$-sphere passes through at most $p+2$ of its points.
We strengthen this requirement so it can be applied to the chromatic case.
\begin{definition}[Chromatic Genericity]
  \label{dfn:chromatic_genericity}
  Call a finite set $A \subseteq \Rspace^d$ \emph{chromatically generic} if every $k+1$ concentric $(d-1)$-spheres pass through at most $d+k+1$ points, and their intersections with any affine $p$-plane pass through at most $p+k+1$ of these points.
\end{definition}
Letting $k+1$ be the number of spheres in a stack, and $m$ the number of points on these spheres, we sometimes call $m-(k+1)$ the \emph{surplus} of the configuration.
Definition~\ref{dfn:chromatic_genericity} limits the surplus to $d$, or to $p$, respectively.
We will use two equivalent formulations of chromatic genericity.
To formulate them, write $E(B)$ for the maximal affine subspace whose points are equidistant to all points in $B$.
It is the common intersection of all bisecting hyperplanes of any two points in $B$.
\begin{lemma}\label{lem:equivalent_genericity_conditions}
  Assume $A \subseteq \Rspace^d$ is finite. 
  Then the following two conditions are equivalent to $A$ being chromatically generic:
  \begin{enumerate}[(a)]
    \item if $B_0, B_1, \dots, B_k$ are non-empty disjoint subsets of $A$, then either $E = E(B_0) \cap E(B_1) \cap \ldots \cap E(B_k)$ is empty or the codimension of $E$ is equal to the surplus, namely $\sum_{j=0}^k \card{B_j} - (k+1)$;
    \item if $S_0, S_1, \dots, S_k$ are concentric spheres, $B_j = S_j \cap A$, and $c_j \in B_j$ is an arbitrary but fixed choice for $0 \leq j \leq k$, then the vectors $\{ b-c_j \mid b \in B_j \setminus \{c_j\},\, 0 \leq j \leq k \}$ are linearly independent.
  \end{enumerate}
\end{lemma}
\begin{proof}
  We establish the equivalences by showing that the chromatic genericity of $A$ implies (a), that (a) implies (b), and that (b) implies the chromatic genericity of $A$.

  \smallskip
  Chromatic genericity $\Rightarrow$ (a).
  We show the contrapositive. Let $B_0, B_1, \ldots, B_k$ be non-empty disjoint subsets that violate (a), and suppose that they minimize the surplus among all such violating collections.
  To violate (a), $E$ must be non-empty and at least one of the sets must contain more than one point.
  Suppose $\card{B_0} \geq 2$, let $x \in B_0$, write $B_0' = B_0 \setminus \{x\}$, and note that (a) holds for $B_0', B_1, \ldots, B_k$, by extremal assumption.
  The surplus of the latter collection is $\sum_{j=0}^k \card{B_j} - k$, which is therefore the codimension of $E' = E(B_0') \cap E(B_1) \cap \ldots \cap E(B_k)$.
  It is also the codimension of $E$, since $E$ is contained in $E'$ and the codimensions differ by at most one.
  It follows that the two spaces coincide.
  Write $B' = B_0' \sqcup B_1 \sqcup \ldots \sqcup B_k$, let $H$ be the affine hull of $B$, and set $p = \dime{H}$.
  Let $t$ be the smallest number for which there exist $t+1$ concentric spheres, $S_0, S_1, \ldots, S_t$ in $H$, passing through each of $B_0', B_1, \ldots, B_k$ (a sphere may pass through more than one subset). 
  We have $t \leq k$ since we may choose the common center of the spheres in $E'$.
  We claim $\card{B'} \geq p+t+1$.
  Assuming $\card{B'} < p+t+1$, the codimension of $E'$ is less than $p$, so $\dim E' + \dim H > d$, which implies the existence of a line, $L$, common to $H$ and $E'$.
  We can therefore find indices $0 \leq i < j \leq t$ and points $y \in S_i$ and $z \in S_j$ such that the bisector of $y, z$ intersects $L$ in a point, $o$.
  Choosing the common center of the spheres at $o$, we thus get only $t$ spheres, which contradicts the choice of $t$.
  So $\card{B'} \geq p+t+1$, as claimed.
  Since the constructed stack of spheres is centered at a point in $E' = E\subseteq E(B_0)$, the sphere passing through $B'_0$ also passes through $x$.
  We thus have a stack of $t+1$ spheres in $H$ that passes through more than $p+t+2$ points, which shows that $A$ is not chromatically generic.

  \smallskip
  (a) $\Rightarrow$ (b).
  Assume $S_j, B_j, c_j$ are as in (b).
  For each $j$, we write $U_j = \{b-c_j \mid b \in B_j \setminus \{c_j\} \}$ and note that $E(B_j)$ is a translate of the orthogonal complement of $\Span{U_j}$.
  Writing $U = U_0 \cup U_1 \cup \ldots \cup U_k$, we thus get $E$ as a translate of the orthogonal complement of $\Span{U}$.
  By (a), the codimension of $E$ is $\sum_{j=0}^k \card{B_j} - (k+1)$, which is therefore the dimension of $\Span{U}$.
  But this is also the number of vectors in $U$, which implies that they are linearly independent, as required to get (b).

  \smallskip
  (b) $\Rightarrow$ chromatic genericity.
  Let $m = \sum_{j=0}^k \card{B_j}$.
  By (b), we get $m - (k+1)$ linearly independent vectors in $\Rspace^d$.
  Therefore $m \leq d+k+1$.
  If all $m$ points lie in a $p$-dimensional affine subspace, then the dimension of the span of the vectors is at most $p$, which implies $m \leq p+k+1$, as required for the chromatic genericity.
\end{proof}

We call the requirement in Definition~\ref{dfn:chromatic_genericity} a genericity condition, tacitly implying that it is satisfied by almost all finite sets.
We prove that this is indeed the case.
In the argument, we concatenate the coordinates of $n$ points in $\Rspace^d$ so we can think of the set as a point in $\Rspace^{nd}$.
\begin{lemma}\label{lem:chromatic_genericity_is_generic}
  For each positive integer, $n$, the family of sets of $n$ points in $\Rspace^d$ that violate chromatic genericity is a finite union of sets with measure zero in $\Rspace^{nd}$.
\end{lemma}
\begin{proof}
  It suffices to consider the $d$-dimensional condition in Definition~\ref{dfn:chromatic_genericity}.
  Indeed, a configuration that violates the $p$-dimensional condition implies at least $p+2$ points in an affine $p$-plane, and the sets that contain such $p+2$ points belong to a subset of dimension at most $nd-1$ of $\Rspace^{nd}$.
  Consider $k+1$ concentric spheres and $d+k+2$ points on these spheres in $\Rspace^d$.
  The surplus of this configuration is $d+1$, which violates chromatic genericity.
  Assigning the sum of squares of the $d$ coordinates as a $(d+1)$-st coordinate to each point, we get $d+k+2$ points on $k+1$ parallel hyperplanes in $\Rspace^{d+1}$.
  For each hyperplane, pick one of its points and take the difference vectors to the other points on this hyperplane.
  This gives a total of $(d+k+2) - (k+1) = d+1$ linearly dependent vectors in $\Rspace^{d+1}$.
  Writing $(x_{i,1}, x_{i,2}, \ldots, x_{i,d+1})$ for the $i$-th vector, the $d+1$ vectors satisfy
  \begin{align*}
    \det \left[ \begin{array}{cccc}
                x_{1,1} & x_{1,2} & \ldots & x_{1,d+1} \\
                x_{2,1} & x_{2,2} & \ldots & x_{2,d+1} \\
                \vdots  & \vdots  & \ddots & \vdots    \\
                x_{d+1,1} & x_{d+1,2} & \ldots & x_{d+1,d+1}
             \end{array} \right]  &=  0.
    % \label{eqn:nongeneric}
  \end{align*}
  This is a polynomial in the coordinates of $\Rspace^{nd}$ that is not everywhere zero.
  Hence, its zero-set is a subspace of dimension at most $nd-1$.

  We have such a polynomial for any $d+k+2$ points and their partition into $k+1$ sets. This is a finite collection as $k$ is bounded by $n$.
  For a set of $n$ points in $\Rspace^d$ to be chromatically generic, it suffices to avoid the resulting finite number of zero-sets, each of dimension at most $nd-1$, which implies Lebesgue measure zero in $\Rspace^{nd}$.
\end{proof}

%%%%%%%%%%%%%%%%%%%%%%%%%%%%%%%%%%%%%%%%%%%%%%%%%%%%%%%%%%%
\subsection{Convex Optimization}
\label{sec:4.2}
%%%%%%%%%%%%%%%%%%%%%%%%%%%%%%%%%%%%%%%%%%%%%%%%%%%%%%%%%%%

The approach mimics the proof in \cite[Section~4]{BaEd17} that the radius function on the Delaunay complex of a mono-chromatic point set is generalized discrete Morse.
As before, $\chi \colon A \to \sigma$ is a chromatic set of finitely many points with $s+1$ colors in $\Rspace^d$. 
Given a collection of points, $\nu\in\Delaunay{}{\chi}$, we define the \emph{smallest empty circumstack} as the solution to an optimization problem with variables $z\in\Rspace^d$ for the center of the stack, and $r=(r_0, r_1, \dots, r_s)$ for the radii of the spheres in the stack:
\begin{align*}
    \operatornamewithlimits{minimize}\limits_{z,r} \hspace{5mm} & \max \{ r_0, r_1, \dots, r_s \} , \\
    \text{subject to} \hspace{5mm} & \norm{x - z} = r_{\chi(x)} && \text{for $x\in\nu$},\\
    & \norm{x - z} \geq r_{\chi(x)} && \text{for $x\in A\setminus \nu$}.
\end{align*}
We want to turn this into a differentiable convex optimization problem. 
Since the maximum function is not differentiable, we introduce a new variable, $b$. 
The constraints need to be either inequalities, $g \leq 0$, for convex differentiable $g$, or equalities or inequalities, for affine $g$. 
We switch to squared distances and radii so that $g_x(z) = \norm{x-z}^2 - r_{\chi(x)}^2$ is differentiable and strictly convex, but the inequalities are in the wrong direction.
We substitute new variables, $a = (a_0, a_1, \dots, a_s)$, for the radii, constrain $b$ to be smaller than or equal to all $a_i$, and get an equivalent optimization problem, whose constraints are affine with respect to the new variables:
\begin{align*}
    a_j &= \norm{z}^2 - r_j^2 
    \hspace{5mm} \text{for all $j=0, 1, \dots, s$}, \\
    g_x(z,a) \,&=\, \norm{x-z}^2 - r_{\chi(x)}^2 \,=\, \norm{x}^2 - 2\scalprod{x}{z} + \norm{z}^2 - r_{\chi(x)}^2 \,=\, \norm{x}^2 -  2\scalprod{x}{z} + a_{\chi(x)}.
\end{align*}
For any $\nu\in\Delaunay{}{\chi}$, we thus get the following differentiable convex optimization problem, $\problemP{\nu}$, in which we write $h_j (a,b) = b-a_j$:
\begin{align*}
    \operatornamewithlimits{minimize}\limits_{z,a,b} \hspace{5mm} & f(z,b) = \norm{z}^2 - b , \span \span \\
    \text{subject to} \hspace{5mm} & \text{$h_j(a,b) \leq 0$} && \text{for $j= 0,1,\ldots, s$}, \\
    & g_x(z,a) = 0 && \text{for $x\in\nu$}, \\ 
    & g_x(z,a) \geq 0 && \text{for $x\in A\setminus \nu$}.
\end{align*}
We formulate the task this way in order to make use of duality, which is a powerful tool in convex optimization \cite[Chapter~5.2]{BoVa04}. 
The \emph{Lagrange dual problem} of $\problemP{\nu}$, denoted $\problemD{\nu}$ and with variables $\lambda = (\lambda_x)_{x\in A}$ and $\mu = (\mu_j)_{j\in\sigma}$, is the following:
\begin{align*}
  \operatornamewithlimits{maximize}\limits_{\lambda, \mu} \hspace{5mm} & G(\lambda, \mu) = \inf\limits_{z,a,b} f(z,b) + \sum\nolimits_{x\in A} \lambda_x g_x(z, a) + \sum\nolimits_{j\in\sigma} \mu_j h_j(a, b), \span \span \\
    %% \operatornamewithlimits{maximize}\limits_{\lambda, \mu} \hspace{5mm} & G(\lambda, \mu) = \inf\limits_{z,a,b} L(z,a,b,\lambda,\mu) = \inf\limits_{z,a,b} f(z,b) + \sum\nolimits_{x\in A} \lambda_x g_x(z, a) + \sum\nolimits_{j\in\sigma} \mu_j h_j(a, b), \span \span \\
  \text{subject to} \hspace{5mm} & \lambda_x \leq 0 && \text{for $x\in A\setminus \nu$}, \\
  & \mu_j \geq 0 && \text{for $j\in\sigma$}.
\end{align*}
Because the sums in $\problemD{\nu}$ are both non-positive,
the value of the dual problem for a feasible solution is always smaller than or equal to the value of the primal problem for any of its feasible solutions.
This is in particular true for the optimal values, and the difference between those is referred to as the \emph{optimal duality gap}. 
Under the chromatic genericity conditions of Definition~\ref{dfn:chromatic_genericity}, the gap for problem $\problemP{\nu}$ is guaranteed to be zero and attained by some $z, a, b, \lambda, \mu$.
This is because $\problemP{\nu}$ is convex and satisfies the Slater's condition \cite[Section~5.2.3]{BoVa04}, which states that there exists a feasible solution with all inequalities strict. 
For $\problemP{\nu}$, this translates to the claim that if there exists an empty circumstack of $\nu$, then there also exists one that passes through no points from $A\setminus\nu$. This reformulation is implied by Lemma~\ref{lem:equivalent_genericity_conditions}~(a).
Indeed, if spaces of equidistant points intersect generically, then so do Voronoi cells, and the desired stack can be centered at any point in the interior of $\bigcap_{v\in\nu}\domain{v}{A_{\chi(v)}}$.

\smallskip
Since $\problemP{\nu}$ is strictly convex as well as differentiable, points $z, a, b, \lambda, \mu$ are primal and dual optima iff they satisfy the \emph{Karush--Kuhn--Tucker} (KKT) conditions \cite[Section~5.5.3]{BoVa04}:
\begin{align}
    & \text{$z, a, b$ is primal feasible, and $\lambda, \mu$ is dual feasible} \label{eq:kkt:feasibility}, \\
    & \text{$\lambda_x \cdot g_x(z,a) = 0$ for all $x\in A$, and $\mu_j \cdot h_j(a,b) = 0$ for all $j\in\sigma$}, \label{eq:kkt:slackness} \\
    & \text{$\nabla f(z,b) + \sum\nolimits_{x\in A} \lambda_x \nabla g_x(z,a) + \sum\nolimits_{j\in\sigma} \mu_j \nabla h_j(a,b) = 0$}. \label{eq:kkt:gradient}
\end{align}
Note that the second condition implies that $\lambda_x \neq 0$ only if $g_x(z,a)=0$, i.e., the stack passes through~$x$. 
Similarly, $\mu_j\neq 0$ only if the $j$-colored sphere has maximum radius among the spheres on the stack.
Next, we give the gradients at point $(z,a,b)$ needed in the last condition:
\begin{align*}
  \nabla f &= (2z; 0, \dots, 0; -1), \\
  \nabla g_x &= (-2x; 0,\dots,0,1,0,\dots,0;0) \hspace{5mm} \text{with $1$ at the position corresponding to $a_{\chi(x)}$}, \\
  \nabla h_j &= (0; 0,\dots,0,-1,0,\dots,0;1) \hspace{7.1mm} \text{with $-1$ at the position corresponding to $a_{j}$}.
\end{align*}
Putting all the above observations together---the existence of zero gap solutions, the KKT conditions, and the gradients---we get the following:
\begin{lemma}\label{lem:kkt} 
  Let $\chi \colon A \to \sigma$, with $A \subseteq \Rspace^d$ chromatically generic, and $\nu \in \Delaunay{}{\chi}$.
  Let $z, a, b$ describe an empty circumstack of $\nu$, and let $\eta \supseteq \nu$ contain all points of $A$ that lie on this stack.
  Then this is the smallest empty circumstack of $\nu$ iff there exist $\lambda_x$ for $x \in A$ and $\mu_j$ for $j \in \sigma$ such that:
  \begin{align}
    &\sum\nolimits_{x\in\eta} \lambda_x x = z ,\label{eq:kkt:z} \\
    &\sum\nolimits_{x\in\eta_j} \lambda_x = \mu_j \text{, in which $\eta_j=\eta\cap\chi^{-1}(j)$} ,\label{eq:kkt:lambda} \\
    &\sum\nolimits_{j\in\sigma} \mu_j = 1 , \label{eq:kkt:mu} \\
    &\lambda_x \leq 0 \text{ for $x \in A \setminus \nu$, and } \lambda_x = 0 \text{ if } x \in A \setminus \eta, \label{eq:kkt:lambdaleq} \\
    &\mu_j \geq 0 \text{ for all $j\in\sigma$, and } \mu_j = 0 \text{ if the $j$-th sphere does not have maximum radius.} \label{eq:kkt:mugeq}
  \end{align}
\end{lemma}
\begin{proof}
  We argue the five conditions in reverse order, from \eqref{eq:kkt:mugeq} to \eqref{eq:kkt:z}.
  The inequality in \eqref{eq:kkt:mugeq} just rewrites the second condition in $\problemD{\nu}$, and the strengthening equality is implied by the second slackness condition in \eqref{eq:kkt:slackness}.
  Similarly, the inequality in \eqref{eq:kkt:lambdaleq} just rewrites the first condition in $\problemD{\nu}$, and the strengthening equality is implied by the first slackness condition in \eqref{eq:kkt:slackness}.
  For the remaining three conditions, we plug the gradients into \eqref{eq:kkt:gradient}:
  \begin{align}
    (2z;0,\ldots,0;-1) + \sum\nolimits_{x \in A} \lambda_x (-2x;0,\ldots,1,\ldots,0;0) + \sum\nolimits_{j \in \sigma} \mu_j (0; 0,\ldots,-1,\ldots,0; 1) &= 0.
  \end{align}
  Comparing the last coordinates, we get \eqref{eq:kkt:mu}, and comparing the coordinates that correspond to color $j$, we get \eqref{eq:kkt:lambda}.
  Combining \eqref{eq:kkt:lambda} and \eqref{eq:kkt:mu}, we get $\sum_{x \in A} \lambda_x = \sum_{x \in \eta} \lambda_x = 1$ because $\lambda_x = 0$ if $x \in A \setminus \eta$.
  Now comparing the first coordinates of the gradients, we get \eqref{eq:kkt:z}.
\end{proof}
Since \eqref{eq:kkt:lambda} and \eqref{eq:kkt:mu} imply $\sum_{x \in \eta} \lambda_x = 1$, the dual solution expresses the center of the stack as an affine combination of the points on the spheres in this stack; see \eqref{eq:kkt:z}.
It pays to unpack this interpretation by distinguishing the spheres with maximum radius from the others.
Let $\sigma' \subseteq \sigma$ be the colors whose spheres have maximum radius and write $\sigma'' = \sigma \setminus \sigma'$.
For each $i \in \sigma''$, we have $\mu_i = 0$, so the sum of the corresponding $\lambda_x$ vanishes, which suggests we interpret the corresponding combination as a vector:
$\sum\nolimits_{x \in \eta_i} \lambda_x x = \sum\nolimits_{x \in \eta_i} \lambda_x (x - y)$,
in which $y = y(i)$ is an arbitrary but fixed point in $\eta_i$.
For each $j \in \sigma'$, the corresponding combination of the points is $\sum_{x \in \eta_j} \lambda_x x$.
With this, we can rewrite \eqref{eq:kkt:z} as
\begin{align*}
  z &= \sum\limits_{j \in \sigma'} \sum\limits_{x \in \eta_j} \lambda_x x
     + \sum\limits_{i \in \sigma''} \sum\limits_{x \in \eta_i} \lambda_x (x - y(i)) .
\end{align*}
The $\lambda_x$ in the first sum add up to $1$, so we can interpret this first sum as an affine combination, while we think of the second sum as a vector that moves us from this affine combination to the center of the smallest empty circumstack.

\medskip \noindent \emph{Remark.}
Following the terminology in \cite{BaEd17}, we could refer to the vertices with $\lambda_x > 0$ as \emph{front} and the vertices with $\lambda_x \leq 0$ as \emph{back}, noting that the latter can be removed without affecting the radius.

%%%%%%%%%%%%%%%%%%%%%%%%%%%%%%%%%%%%%%%%%%%%%%%%%%%%%%%%%%%
\subsection{Proof of Generalized Discrete Morse Property}
\label{sec:4.3}
%%%%%%%%%%%%%%%%%%%%%%%%%%%%%%%%%%%%%%%%%%%%%%%%%%%%%%%%%%%

The crucial insight that turns the dual solution into a proof that the chromatic radius function is generalized Morse is the following:  when we remove a point $x$ from $\nu$, this only affects Condition \eqref{eq:kkt:lambdaleq} in Lemma~\ref{lem:kkt}.
Therefore, if $\lambda_x \leq 0$, then the smallest empty circumstack of $\nu$ is still the smallest empty circumstack of $\nu \setminus \{x\}$.
The idea is that we remove all points from $\nu$ with non-positive coefficient and thus obtain the minimum of the interval that contains $\nu$.
For this, it is important that we identify the points uniquely, but this is guaranteed by the chromatic genericity of the points, which ascertains that the optimal dual solution is unique.
\begin{lemma}\label{lem: uniqueness_of_dual_solution}
  Let $\chi \colon A \to \sigma$ be a chromatic point set in $\Rspace^d$, and let $z, a, b$ describe an empty stack that passes through the points in $\eta \subseteq A$. 
  If $A$ is chromatically generic, then there exists at most one
  set of parameters $\lambda_x$ and $\mu_j$, with $x \in A$ and $j \in \sigma$, that satisfies the conditions of Lemma~\ref{lem:kkt}.
\end{lemma}
\begin{proof}
  By Conditions \eqref{eq:kkt:lambdaleq} and \eqref{eq:kkt:mugeq}, we have $\lambda_x = 0$ if $x \in A \setminus \eta$ and $\mu_j = 0$ if $j \in \sigma''$, in which we recall that $\sigma = \sigma' \sqcup \sigma''$ and $\sigma'$ are the colors whose spheres have maximum radius.
  It thus suffices to show that the linear relation \eqref{eq:kkt:gradient} restricted to points $x \in \eta$ and colors $j \in \sigma'$,
  \begin{align*}
    \nabla f(z,b) + \sum\nolimits_{x\in \eta} \lambda_x \nabla g_x(z,a) + \sum\nolimits_{j\in\sigma'} \mu_j \nabla h_j(a,b) &= 0 ,
  \end{align*}
  has at most one solution.
  We do this by showing that the $\nabla g_x$ and $\nabla h_j$ are linearly independent.
  Writing these vectors as the columns of a matrix, we perform elementary column operations to make it obvious that the columns are linearly independent.
  First simplify the notation by assuming $\sigma' = \{0,1,\ldots,k\}$, and replace $\nabla h_j$ by $\nabla h_j - \nabla h_0$, for $1 \leq j \leq k$.
  The resulting $k$ vectors are the respective first columns of blocks $1$ to $k$ of the matrix in Table~\ref{tbl:gradients}.
  \begin{table}[t]
    \vspace{-0.1in} {\footnotesize
    \[
      \begin{bNiceArray}{rccc||rcc|c|rcc||ccc|c}[margin,first-col,first-row]
      &\multicolumn{4}{c}{\text{block $0$}}&\multicolumn{3}{c}{\text{block $1$}}& &\multicolumn{3}{c}{\text{block $k$}}&\multicolumn{3}{c}{\text{block $k+1$}} & \\
      1,2,\dots,d& 0&c_0&x-c_0&\ldots& 0 &x-c_0&\ldots&\ldots& 0 &x-c_0&\ldots&c_{k+1}&x-c_{k+1}&\ldots&\ldots\\\hline
           d+1&-1& 1 &     &      & 1 &     &      &\ldots& 1 &     &      &       &         &      &\ldots\\
           d+2&  &   &     &      &-1 &     &      &\ldots&   &     &      &       &         &      &\ldots\\
\vdots\ \ \ \ &  &   &     &      &   &     &      &\vdots&   &     &      &       &         &      &\vdots\\
           d+k&  &   &     &      &   &     &      &\ldots&-1 &     &      &       &         &      &\ldots\\
         d+k+1&  &   &     &      &   &     &      &\ldots&   &     &      &   1   &         &      &\ldots\\
\vdots\ \ \ \ &  &   &     &      &   &     &      &\vdots&   &     &      &       &         &      &\vdots\\\hline
         d+s+2& 1&   &     &      &   &     &      &\ldots&   &     &      &       &         &      &\ldots
        \end{bNiceArray}
    \]
    }
    \vspace{5pt}
    \caption{{\footnotesize The columns in this matrix are the gradient vectors after combining them as explained in the proof of Lemma~\ref{lem: uniqueness_of_dual_solution}.
    Zero entries are left blank.
    There are $s+1$ blocks of columns, one for each color.
    The respective first columns of the first $k+1$ blocks contain $\nabla h_0$ and $\nabla h_j - \nabla h_0$, for $1 \leq j \leq k$.
    The points of colors $0$ to $k$ all lie on the same sphere, and we get vectors by subtracting the same point, $c_0$, from each such point.
    For each color $j \geq k+1$, we get vectors from the points in block $j$ by subtracting an arbitrary but fixed point $c_j \in B_j$.}}
    \label{tbl:gradients}
  \end{table}
  Furthermore, we replace $\nabla g_x$ by $\nabla g_x + \nabla h_j - \nabla h_0$ for every $x \in \eta_j$ and $1 \leq j \leq k$,
  which effectively moves the $1$ in row $d+j+1$ to row $d+1$.
  Recall that the first $d$ coordinates of $\nabla g_x$ are those of $-2x$.
  We may replace them by the coordinates of $x$ without affecting the linear independence of the vectors.
  Finally, choose an arbitrary but fixed $c_0 \in \eta_0$, and replace $\nabla g_x$ by $\nabla g_x - \nabla g_{c_0}$, for all $x \neq c_0$ in $\eta_j$ with $0\leq j\leq k$.
  Similarly, for each other color $j$, with $k+1 \leq j \leq s$, choose an arbitrary but fixed point, $c_j \in \eta_j$, and replace $\nabla g_x$ by $\nabla g_x - \nabla g_{c_j}$, for each $x \in \eta_j \setminus \{c_j\}$; see the first row of the matrix in Table~\ref{tbl:gradients}.

  \smallskip
  It is now easy to see that the columns in the matrix are linearly independent. 
  To begin, collect all columns that start with $x-c_j$.
  Their topmost $d$ positions contain the vectors considered and found linearly independent in Lemma~\ref{lem:equivalent_genericity_conditions} (b).
  All the remaining columns have their unique pivots in the $s+2$ rows below the top $d$ rows, so adding them preserves the linear independence.
\end{proof}

Guaranteeing uniqueness of the dual solution is, indeed, necessary to have the simplices organized in intervals.
See Figure~\ref{fig:non-generic_counterexample} for an example of points that fail to be chromatically generic whose radius function is not generalized discrete Morse. 
The common center of the two circles is the point $z$, which can be expressed as an affine combination satisfying conditions in Lemma~\ref{lem:kkt} in more than one way. 
We need $\lambda_a+\lambda_b = 1$ and $\lambda_c + \lambda_d = 0$. We can express $z$ as a combination of $a,b$, and not use $c,d$ at all, we can start at a combination of $a, b$ to the left of $a$ and then move to $z$ along a positive multiple of the vector $d-c$, or we can start at a combination of $a, b$ to the right of $b$ and move to $z$ along a negative multiple of $d-c$. 
This leads to $(\lambda_a, \lambda_b, \lambda_c, \lambda_d)$ having signatures $(+,+,0,0)$, $(+,-,-,+)$, and $(-,+,+,-)$, respectively.
As explained in the caption of Figure~\ref{fig:non-generic_counterexample}, this ambiguity prevents the formation of intervals.
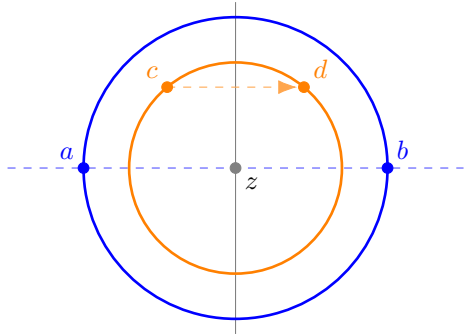
\begin{figure}[hbt]
    \centering
    \begin{tikzpicture}
        \def\radblue{2}
        \def\radred{1.4}
        \coordinate (z) at (0,0);
        \coordinate (p0) at (-1.5*\radblue, 0);
        \coordinate (p1) at (1.5*\radblue, 0);
        \coordinate (a) at (-\radblue, 0);
        \coordinate (b) at (\radblue, 0);
        \coordinate (c) at (90+40:\radred);
        \coordinate (d) at (90-40:\radred);

        \draw[gray] (0, -1.1*\radblue) -- (0, 1.1*\radblue);
        \draw[blue!70, dashed] (p0) -- (p1);
        \draw[orange!70, dashed, -{Latex[length=2.5mm]}, shorten >=2.5pt] (c) -- (d);
        
        \filldraw[gray] (z) circle(2pt) node[below right, black] {$z$};
        %\filldraw[gray] (p0) circle(2pt) node[below right, black] {$p_0$};
        %\filldraw[gray] (p1) circle(2pt) node[below right, black] {$p_1$};
        \draw[blue, line width=1pt] (z) circle (\radblue);
        \draw[orange, line width=1pt] (z) circle (\radred);

        \filldraw[blue] (a) circle(2pt) node[above left] {$a$};
        \filldraw[blue] (b) circle(2pt) node[above right] {$b$};
        \filldraw[orange] (c) circle(2pt) node[above left] {$c$};
        \filldraw[orange] (d) circle(2pt) node[above right] {$d$};
    \end{tikzpicture}
    \caption{{\footnotesize An example showing that chromatic genericity is needed for the radius function to be generalized discrete Morse.
    Two \emph{blue} points, $a,b$, and two \emph{orange} points, $c,d$, share a common bisector and therefore violate the chromatic genericity condition in Definition~\ref{dfn:chromatic_genericity}.
    Indeed, the four points also lie on a common circle. 
    The two shown circles belong to the smallest empty circumstack of the simplex $abcd$.
    This stack is still the smallest empty circumstack for the edges $ab$, $ad$, and $bc$, but not of any single vertex.
    Hence, the set of simplices that share the radius with $abcd$ does not have a unique minimum and is therefore not an interval.}}
    \label{fig:non-generic_counterexample}
\end{figure}

\smallskip
We are now ready to argue that the radius function on the chromatic Delaunay complex is generalized discrete Morse, provided the points are chromatically generic.
To this end, we construct the interval that contains a given simplex, $\nu$, in the Delaunay complex of $\chi \colon A \to \sigma$.
Starting with the smallest empty circumstack of $\nu$, we add a sphere for every color that is not yet represented and passes through a point at distance at most the radius of the stack from its center.
Specifically, we add the unique sphere that shares the center with the other spheres and passes through the closest point of that color.
Call this the \emph{augmented smallest empty circumstack} of $\nu$, and write $\nu_{\max}$ for the points its spheres pass through.
This augmented stack is the smallest empty circumstack of $\nu_{\max}$.
By Lemma~\ref{lem: uniqueness_of_dual_solution}, there is a unique dual solution, $\lambda, \mu$.
This dual solution assigns a coefficient $\lambda_x$ to each point $x \in A$, and we write $\nu_{\min}$ for the points with $\lambda_x > 0$.
By construction, $\nu \subseteq \nu_{\max}$, and by Condition \eqref{eq:kkt:lambdaleq} in Lemma~\ref{lem:kkt}, $\nu_{\min} \subseteq \nu$.
\begin{theorem}
  \label{thm:chromatic_radius_functions_are_generalized_discrete_Morse}
  Let $A \subseteq \Rspace^d$ be chromatically generic and $\chi \colon A \to \sigma$ a chromatic point set.
  Then the chromatic radius function, $\Radiusf \colon \Delaunay{}{\chi} \to \Rspace$, is generalized discrete Morse.
\end{theorem}
\begin{proof}
  Given $\nu \in \Delaunay{}{\chi}$, we show that the simplices $\nu'$ that satisfy $\nu_{\min} \subseteq \nu' \subseteq \nu_{\max}$ are the unique interval of $\Radiusf$ that contains $\nu$.
  Let $\lambda, \mu$ be the parameters of the dual solution to the smallest empty circumstack of $\nu_{\max}$.
  By Lemma~\ref{lem: uniqueness_of_dual_solution}, $\lambda$ and $\mu$ are unique, and by Lemma~\ref{lem:kkt}, we get the same dual solution of every $\nu'$ that satisfies $\nu_{\min} \subseteq \nu' \subseteq \nu_{\max}$.
  This implies $\Radiusf(\nu') = \Radiusf(\nu)$ for every such $\nu'$.

  \smallskip
  Next we show that $\Radiusf(\nu'') < \Radiusf(\nu)$ for every $\nu'' \subseteq \nu_{\max}$ that does not contain $\nu_{\min}$.
  Consider the smallest empty circumstack of $\nu''$, and let $\lambda'', \mu''$ be the dual solution.
  By Lemma~\ref{lem:kkt}, we get $\lambda''_x \leq 0$ for every $x \in \nu_{\min} \setminus \nu''$, but we have $\lambda_x > 0$ by construction of $\nu_{\min}$.
  By Lemma~\ref{lem: uniqueness_of_dual_solution}, this implies that the smallest empty circumstack of $\nu''$ is different from that of $\nu$.
  Since the two smallest empty circumstacks are different, they have different centers and, by strict convexity, different radii, so $\Radiusf(\nu'') < \Radiusf(\nu)$.

  \smallskip
  It remains to show that $\Radiusf(\nu''') > \Radiusf(\nu)$ for every $\nu''' \supseteq \nu_{\min}$ that is not contained in $\nu_{\max}$.
  The points $y \in \nu''' \setminus \nu_{\max}$ do not lie on the smallest empty circumstack of $\nu_{\max}$, which implies that the smallest empty circumstack of $\nu'''$ is different from that of $\nu$.
  Since $\nu'''$ contains $\nu_{\min}$, the stack of $\nu'''$ must therefore be larger than that of $\nu$.
  As before, the smallest empty circumstacks are different, which implies they have different centers and radii, so $\Radiusf(\nu''') > \Radiusf(\nu)$.
\end{proof}

\noindent \emph{Remark.}
The smallest empty circumstack of a maximum, $\nu_{\max}$, is also the smallest circumstack of $\nu_{\max}$ (without enforcing emptiness).
This is evident from the formulation as an optimization problem:  all constaints $g_x \geq 0$ are inactive so all corresponding $\lambda_x$ vanish.
If we remove these constraints, the same parameters $z, a, b, \lambda, \mu$ still satisfy the KKT conditions and therefore describe the same optimal solution.

\medskip \noindent \emph{Remark.}
Given a chromatically generic $A \subseteq \Rspace^d$, we may compare the Delaunay complexes and their radius functions in the chromatic and the mono-chromatic cases.
As stated in Lemma~\ref{lem:homotopy_equivalence}, the sublevel sets at matching thresholds have the same homotopy type.
Since the type changes whenever we add a critical simplex---which is characterized by $\nu_{\min} = \nu_{\max}$---this implies that the two radius functions have the same critical values.
The optimization perspective reveals that the critical values belong to the same critical simplices. 
Indeed, the smallest empty circumstack of a chromatic critical simplex, $\nu$, is in fact a single circumsphere:  if $\nu_{\max}=\nu_{\min}$, then its dual solution has $\lambda_x > 0$ for all $x\in\nu$, so $\mu_j > 0$ for all $j\in\chi(\nu)$, and all spheres share the same, maximum radius. 
Since the chromatic radius is bounded from above by the mono-chromatic radius, both agree on $\nu$. 
To see that $\nu$ is also critical in the mono-chromatic case, note that problem $\problemP{\nu}$, but with inequalities $h_j\leq 0$ changed to equalities, has the mono-chromatic radius as the optimum. 
The only change in the dual problem is that we lose inequalities on $\mu_j$. Since Lemma~\ref{lem: uniqueness_of_dual_solution} guarantees uniqueness regardless of the inequalities, this implies that $\lambda_x>0$ also for the modified dual problem, so $\nu$ is critical also in the mono-chromatic case.
  %% \ondra{This is actually a fun relation---it tells us that a simplex has lower radius in the chromatic than mono-chromatic iff some of the $\mu_j$ in the unique expression of the center of the smallest empty circumcircle are negative. Might perhaps give some leads to the collapsibility?}

%%%%%%%%%%%%%%%%%%%%%%%%%%%%%%%%%%%%%%%%%%%%%%%%%%%%%%%%%%%
%%%%%%%%%%%%%%%%%%%%%%%%%%%%%%%%%%%%%%%%%%%%%%%%%%%%%%%%%%%
\section{Persistent Homology of Chromatic Alpha Complexes}
\label{sec:5}
%%%%%%%%%%%%%%%%%%%%%%%%%%%%%%%%%%%%%%%%%%%%%%%%%%%%%%%%%%%
%%%%%%%%%%%%%%%%%%%%%%%%%%%%%%%%%%%%%%%%%%%%%%%%%%%%%%%%%%%

In this section, we review the background needed to turn the chromatic alpha complexes into persistence diagrams, and we advocate the use of six such diagrams, which we refer to as a $6$-pack.
In addition, we discuss the relations between the diagrams in a $6$-pack, as well as relations between different $6$-packs arising from different choices of the subcomplex.

%%%%%%%%%%%%%%%%%%%%%%%%%%%%%%%%%%%%%%%%%%%%%%%%%%%%%%%%%%%
\subsection{Background: Persistent Homology}
\label{sec:5.1}
%%%%%%%%%%%%%%%%%%%%%%%%%%%%%%%%%%%%%%%%%%%%%%%%%%%%%%%%%%%

The goal of this subsection is to introduce the framework of persistent homology \cite{EdHa10}, together with its kernel, image, and cokernel generalizations \cite{CEHM09}. We keep the formalism to a minimum by limiting ourselves to simplicial complexes and $\Zspace / 2\Zspace$ coefficients.

%%%%%%%%%%%%%%%%%%%%%%%%%%%%%%%%%%%%%%%%%%%%%%%%%%%%%%%%%%%
\subsubsection*{Homology with $\mathbb{Z}/2 \mathbb{Z}$ Coefficients}
\label{sec:5.1.1}
%%%%%%%%%%%%%%%%%%%%%%%%%%%%%%%%%%%%%%%%%%%%%%%%%%%%%%%%%%%

Loosely speaking, homology is an algebraic framework that defines and counts holes in a shape. Given a simplicial complex, $K$, a \emph{$p$-chain} is a subset of $p$-simplices. The \emph{sum} of two $p$-chains is the symmetric difference of the two sets: if a $p$-simplex belongs to both chains, the two copies erase each other, as $1+1 = 0$ in modulo-$2$ arithmetic.
The \emph{boundary} of a $p$-simplex is the set of $(p-1)$-dimensional faces, which is a $(p-1)$-chain.
The $p$-chains with the sum operation form a group, $\Cgroup{p}{K}$, and the \emph{boundary operator}, $\partial_p \colon \Cgroup{p}{K} \to \Cgroup{p-1}{K}$, maps a $p$-chain to the sum of its simplices' boundaries.
A~\emph{$p$-cycle} is a $p$-chain with empty boundary, a \emph{filling} of this $p$-cycle is a $(p+1)$-chain whose boundary is the $p$-cycle, and a \emph{$p$-boundary} is a $p$-cycle for which there exits a filling.
The $p$-boundaries and the $p$-cycles form groups by themselves, and since every $p$-boundary is a $p$-cycle, and every $p$-cycle is a $p$-chain, we get three nested groups: $\Bgroup{p}{K} \subseteq \Zgroup{p}{K} \subseteq \Cgroup{p}{K}$.
Two $p$-cycles are \emph{homologous} if their sum has a filling or, equivalently, adding a $p$-boundary to one $p$-cycle gives the other $p$-cycle.
Being homologous is an equivalence relation, whose equivalence classes are the elements of the \emph{$p$-th homology group}: $\Hgroup{p}{K} = \Zgroup{p}{K} / \Bgroup{p}{K}$.
All mentioned groups are vector spaces, so the \emph{ranks} are their dimensions.
Of particular importance is the \emph{$p$-th Betti number} of $K$, which is the rank of the $p$-th homology group: $\rank{\Hgroup{p}{K}} = \rank{\Zgroup{p}{K}} - \rank{\Bgroup{p}{K}}$.

\smallskip
Let $L$ be a subcomplex of $K$. Relative homology describes the connectivity of %$K - K_i$, which is generally not a complex.
the topological pair $(K,L)$, which geometrically represents $K$ with the subspace $L$ identified as a single point.
The chain groups are the quotients $C_p(K, L) = C_p(K) \big/ C_p(L)$.
Cycle and boundary subgroups are defined as before and, in particular, a relative chain is a relative cycle if its boundary lies in $L$.
Their quotients are the \emph{relative homology groups of the pair}, denoted $\Hgroup{p}{K,L}$.
A convenient algorithm to compute $\Hgroup{p}{K,L}$ reduces the boundary matrix of $K$ from which all rows and columns that correspond to simplices in $L$ are purged. For the spaces considered here, relative homology of a pair is isomorphic to the homology of the quotient space $K\big/L$.
The homology groups and their relative cousins are related by the following long exact sequence:
\begin{align}
  \ldots \to \Hgroup{p}{L} \to \Hgroup{p}{K} \to \Hgroup{p}{K,L} \to \Hgroup{p-1}{L} \to \ldots
    \label{eqn:ESofPair}
\end{align}
A well known property of long exact sequences is that the alternating sum of dimensions of the vector spaces vanishes.
\begin{lemma}
  \label{lem:alternating_sum_of_ranks}
  Let $L \subseteq K$ be simplicial complexes.
  Then
  \begin{align*}
    \sum\nolimits_{p \in \Zspace} (-1)^p [ \rank{\Hgroup{p}{L}} - \rank{\Hgroup{p}{K}} + \rank{\Hgroup{p}{K, L}} ] &= 0.
  \end{align*}
\end{lemma}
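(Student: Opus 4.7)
The plan is to apply the long exact sequence of the pair $(K_i, L_i)$ displayed in~\eqref{eqn:ESofPair}, and to invoke the standard fact, recalled immediately above the lemma, that the alternating sum of dimensions of the vector spaces in a long exact sequence of finite-dimensional vector spaces vanishes. Since each $K_i$ is a finite simplicial complex, all homology and relative homology groups in question are finite-dimensional $\Zspace/2\Zspace$-vector spaces, and all of them vanish for $p$ outside a bounded range, so this general fact applies.

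The only care needed is in the sign bookkeeping, because each dimension contributes three consecutive terms $\Hgroup{p}{L_i}, \Hgroup{p}{K_i}, \Hgroup{p}{K_i, L_i}$ to the sequence. Assigning alternating signs along~\eqref{eqn:ESofPair} attaches a triple of signs of the form $(+,-,+)$ to dimension $p$ and hence the opposite triple $(-,+,-)$ to dimension $p-1$; inductively, dimension $p-k$ receives the triple $(-1)^k \cdot (+,-,+)$. Calibrating the overall sign so that $\Hgroup{p}{L_i}$ contributes with sign $(-1)^p$, the alternating sum becomes
\[
  \sum_{p \in \Zspace} (-1)^p \left[ \rank{\Hgroup{p}{L_i}} - \rank{\Hgroup{p}{K_i}} + \rank{\Hgroup{p}{K_i, L_i}} \right] = 0,
\]
which is precisely the identity to be proved.

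Because the lemma reduces to a direct application of a well-known alternating-sum identity for long exact sequences, I do not anticipate any genuine obstacle; the bulk of the effort is just aligning the signs so that each triple $(L_i, K_i, (K_i, L_i))$ is read off consistently. The lemma is recorded as a lightweight bookkeeping tool to be used later in the paper when dimensions of the various persistent homology groups are compared.
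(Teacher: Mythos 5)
Your proposal is correct and follows essentially the same route as the paper: apply the long exact sequence of the pair~\eqref{eqn:ESofPair} and use the vanishing of the alternating sum of ranks along a long exact sequence (the paper just additionally spells out the telescoping argument for that vanishing, which you cite as a known fact). The sign bookkeeping you describe matches the paper's convention.
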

\begin{proof}
  By definition of exactness, the rank of each homology group in \eqref{eqn:ESofPair} can be written as the sum of two non-negative integers such that it shares one with the preceding group and the other with the succeeding group along the sequence.
  Since only finitely many groups have non-zero ranks, this implies that the alternating sum of ranks vanishes.
\end{proof}

%%%%%%%%%%%%%%%%%%%%%%%%%%%%%%%%%%%%%%%%%%%%%%%
\subsubsection*{Persistent Homology}
\label{sec:5.1.2}
%%%%%%%%%%%%%%%%%%%%%%%%%%%%%%%%%%%%%%%%%%%%%%%
    
In the following, let $f \colon K \to \Rspace$ be monotonic, with values $r_1<r_2<\ldots<r_n$, and let $K_i = f^{-1} (-\infty, r_i]$ be its $i$-th sublevel set.
Applying the $p$-th homology functor to $\emptyset=K_0\subseteq K_1\subseteq\ldots\subseteq K_n$, we get a sequence of vector spaces:
\begin{align*}
  \Hgroup{p}{K_0} \to \ldots \to \Hgroup{p}{K_{i-1}} \to \Hgroup{p}{K_i} \to \ldots \to \Hgroup{p}{K_{j-1}} \to \Hgroup{p}{K_j} \to \ldots \to \Hgroup{p}{K_n} .
\end{align*}
There is one such sequence for each dimension, $p$.
The inclusions $K_i \subseteq K_j$ induce maps $\fgroup{i}{j} \colon \Hgroup{p}{K_i} \to \Hgroup{p}{K_j}$ for all $0 \leq i \leq j \leq n$.
This sequence is called a \emph{persistence module}.
It can be written as a direct sum of indecomposable modules of the form $\ldots \to 0 \to \kkk \to \ldots \to \kkk \to 0 \to \ldots$, where $\kkk = \Zspace/2\Zspace$, all maps between these $1$-dimensional vector spaces are identities, and all others are zero maps.
Each indecomposable module has a concrete interpretation, namely a \emph{birth} followed by a \emph{death} of a homology class.
Specifically, we have such an indecomposable module from position $i$ to position $j-1$ if
\medskip \begin{itemize}
  \item there is a class, $\gamma \in \Hgroup{p}{K_i}$ that does not belong to the image of $\fgroup{i-1}{i}$, and 
  \item $\fgroup{i}{j-1} (\gamma)$ does not belong to the image of $\fgroup{i-1}{j-1}$, but $\fgroup{i}{j} (\gamma)$ belongs to the image of $\fgroup{i-1}{j}$.
\end{itemize} \medskip
We say $\gamma$ is \emph{born} at $K_i$ and \emph{dies entering} $K_j$.
We record this information with the point $(f(r_i), f(r_j))$, noting that the second coordinate is $\infty$ if the class is born but never dies.
The resulting multi-set of points in the extended plane is the \emph{$p$-th persistence diagram} of $f$, denoted $\Dgm{p}{f}$. Sometimes, we drop the index and write $\Dgm{}{f}$ for the disjoint union of the $\Dgm{p}{f}$ over all dimensions, $p$.
If $L$ is a subcomplex of $K$, we get a filtration, $L_i$, by restricting $f$ to $L$. 
The inclusions of the pairs $(K_0,L_0)\subseteq(K_1,L_1)\subseteq\ldots\subseteq (K_n,L_n)$ give rise to a sequence of relative homology groups,
\begin{align*}
  \Hgroup{p}{K_0,L_0} \to \ldots \to \Hgroup{p}{K_{i-1},L_{i-1}} \to \Hgroup{p}{K_i,L_i} \to \ldots \!\to \Hgroup{p}{K_n,L_n} .
\end{align*}
Applying the above definitions to this sequence yields the \emph{$p$-th relative persistent diagram}.

\smallskip
An important property of the persistence diagram is its stability.
Specifically, the bottleneck distance between the diagrams of $f, g \colon K \to \Rspace$ is bounded from above by the $L_\infty$-distance between the two maps:
\begin{align}
  \Bottleneck{\Dgm{p}{f}}{\Dgm{p}{g}}  &\leq  \| f-g \|_\infty ;
\end{align}
see \cite{CEH07}.
The \emph{persistence} of a point in the persistence diagram is the vertical distance to the diagonal, $| f(r_j) - f(r_i)| $, and the \emph{$1$-norm} of the diagram is the sum of persistences of its points, denoted $\norm{\Dgm{}{f}}_1$.
To cope with points at infinity, we use a \emph{cut-off}, $C$, which we effectively substitute for $\infty$ (and for birth- and death-values larger than the threshold).
This gives finite $1$-norms and preserves relationships implied by exact sequences, as expressed in Theorem~\ref{thm:norm_relations} below.

%%%%%%%%%%%%%%%%%%%%%%%%%%%%%%%%%%%%%%%%%%%%%%%%%%%%%%%%%%%
\subsubsection*{Kernels, Images, and Cokernels}
\label{sec:5.1.3}
%%%%%%%%%%%%%%%%%%%%%%%%%%%%%%%%%%%%%%%%%%%%%%%%%%%%%%%%%%%
   
Let $L \subseteq K$ be simplicial complexes, $\Kfun \colon K \to \Rspace$ monotonic, and $\Lfun \colon L \to \Rspace$ the restriction of $\Kfun$ to $L$.
Taking sublevel sets, we get two parallel persistence modules and maps from one module to the other:
\begin{align*}
  \begin{array}{ccc ccccc ccc}
    \Hgroup{p}{K_0} & \to & \ldots & \to & \Hgroup{p}{K_i} & \to & \Hgroup{p}{K_{i+1}} & \to & \ldots & \to & \Hgroup{p}{K_n} \\
    \uparrow & & \ldots & & \uparrow & & \uparrow & & \ldots & & \uparrow   \\
    \Hgroup{p}{L_0} & \to & \ldots & \to & \Hgroup{p}{L_i} & \to & \Hgroup{p}{L_{i+1}} & \to & \ldots & \to & \Hgroup{p}{L_n} .
  \end{array}
\end{align*}
Write $\kappa_i \colon \Hgroup{p}{L_i} \to \Hgroup{p}{K_i}$ for the vertical maps, which are induced by the inclusions $L_i \subseteq K_i$, for $0 \leq i \leq n$.
These maps have kernels, images, and cokernels, which form persistence modules of their own:
\begin{align*}
  \begin{array}{ccc ccccc ccc}
    \kernel{p}{\kappa_0} & \to & \ldots & \to & \kernel{p}{\kappa_i} & \to & \kernel{p}{\kappa_{i+1}} & \to & \ldots & \to & \kernel{p}{\kappa_n}, \\
    \image{p}{\kappa_0} & \to & \ldots & \to & \image{p}{\kappa_i} & \to & \image{p}{\kappa_{i+1}} & \to & \ldots & \to & \image{p}{\kappa_n}, \\
    \coker{p}{\kappa_0} & \to & \ldots & \to & \coker{p}{\kappa_i} & \to & \coker{p}{\kappa_{i+1}} & \to & \ldots & \to & \coker{p}{\kappa_n} .
      % \label{eqn:imkercok}
  \end{array}
\end{align*}
These persistence modules were introduced and studied in \cite{CEHM09}.
Following the notation in that paper, we write $\Dgm{}{\kernel{}{\Lfun \to \Kfun}}$, $\Dgm{}{\image{}{\Lfun \to \Kfun}}$, and $\Dgm{}{\coker{}{\Lfun \to \Kfun}}$ for the corresponding persistence diagrams.
These diagrams are also stable under perturbations of the monotonic function, and they can be computed efficiently.
We omit details and refer to \cite{CEHM09}, in particular for the matrix reduction algorithms, which we have implemented \cite{DrMa23} to study the meaning of these derived persistence diagrams for chromatic point sets.

%%%%%%%%%%%%%%%%%%%%%%%%%%%%%%%%%%%%%%%%%%%%%%%%%%%%%%%%%%%
\subsection{6-pack of Persistent Diagrams}
\label{sec:5.2}
%%%%%%%%%%%%%%%%%%%%%%%%%%%%%%%%%%%%%%%%%%%%%%%%%%%%%%%%%%%

The main new concept in this section is a collection of six related persistence diagrams, which we use to quantify the way different point sets mingle. We call this collection a \emph{$6$-pack}. A $6$-pack can be defined for any pair of topological spaces $L\subseteq K$ with a filtration on $K$. We explain the construction on a concrete example illustrated in Figure~\ref{fig:circle_on_background}. 
Let $K = \Delaunay{}{\chi}$ be the chromatic Delaunay complex of the portrayed chromatic set, and let $L \subseteq K$ be the blue subcomplex, consisting of those simplices in $K$ that only have blue vertices. 
Let $\Kfun \colon K \to \Rspace$ be the chromatic radius function, and write $\Lfun$ and $\KLfun$ for the restrictions of $\Kfun$ to $L$ and $K \setminus L$.
\begin{figure}[htb]
  \centering \vspace{0.1in}
  \includegraphics[width=.38\textwidth]{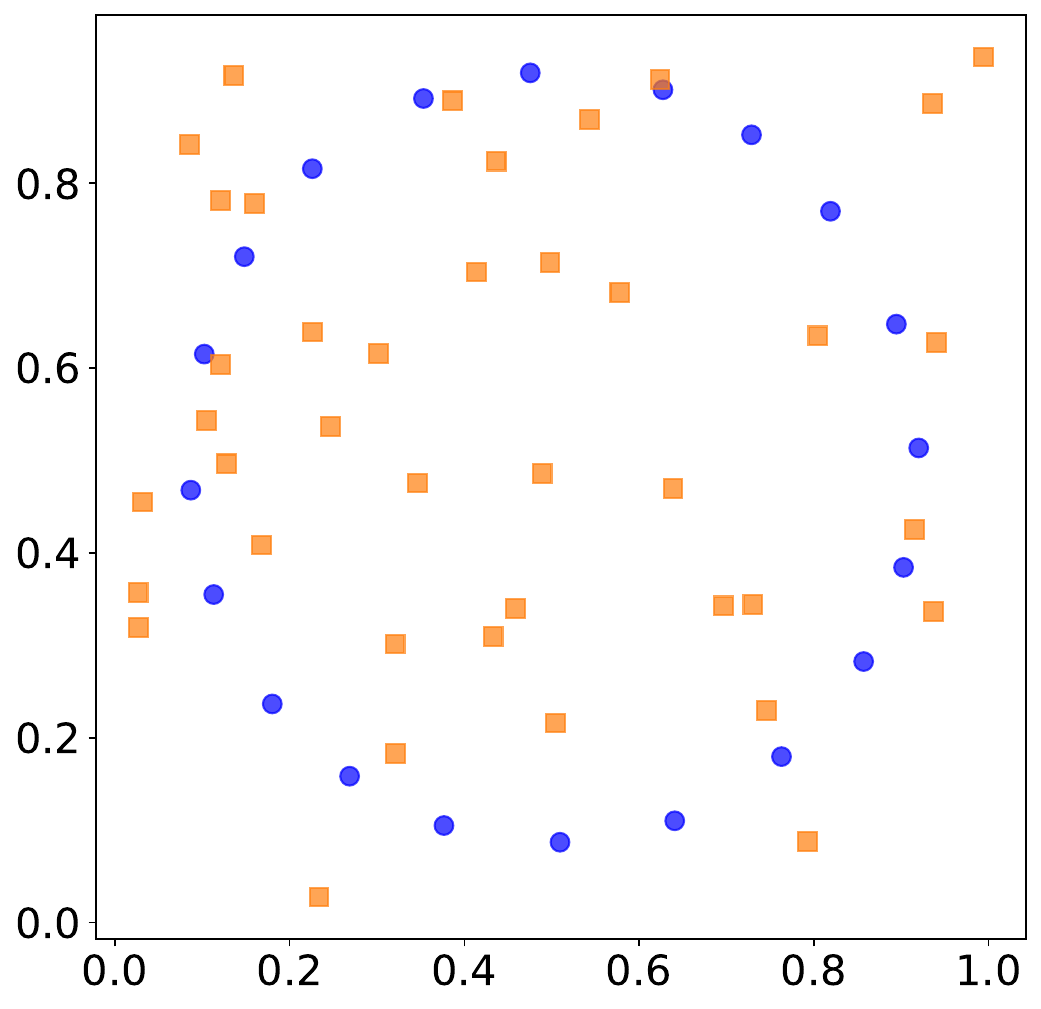}
  \hspace{10mm}
  \includegraphics[width=.38\textwidth]{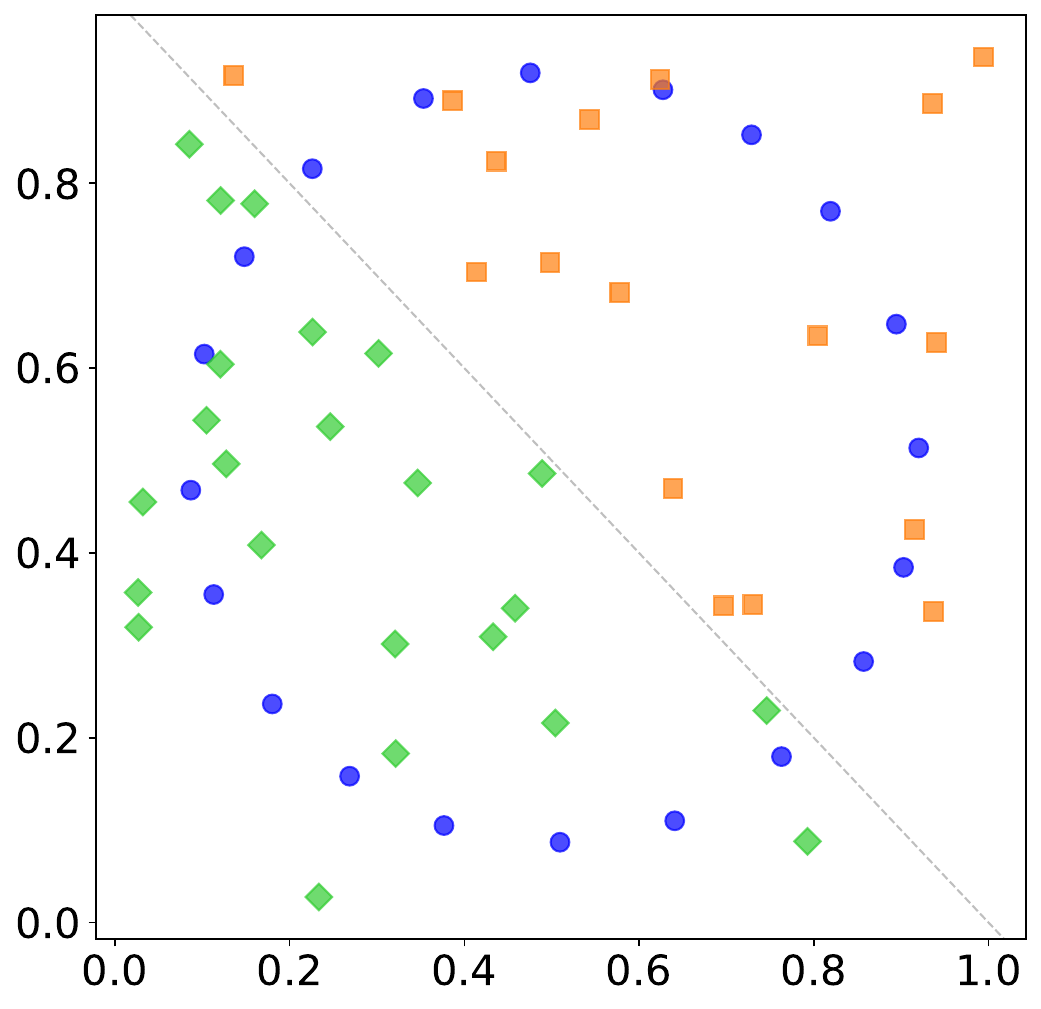}
  \caption{\footnotesize{A bi-chromatic point set on the \emph{left}, and a tri-chromatic point set on the \emph{right}.
  The \emph{dotted} line indicates the separation of \emph{green} from \emph{orange} points that form the background for the \emph{blue} circle.}}
  \label{fig:circle_on_background}
\end{figure}
The radius function and its restrictions give rise to three persistence modules, and we get three additional persistence modules for the kernel, the image, and the cokernel of the map on homology induced by the inclusion $L \subseteq K$; see Section~\ref{sec:5.1.3}.
The persistence diagrams in the $6$-pack are arranged as in Table~\ref{tab:6-pack}, in a manner that lends itself to comparing the information between them.
\begin{table}[htb]
  \centering
  \begin{tabular}{c||c||c}
    \multicolumn{1}{l||}{\emph{kernel:}} & \multicolumn{1}{l||}{\emph{relative:}} & \multicolumn{1}{l}{\emph{cokernel:}} \\
    $\Dgm{}{\kernel{}{\Lfun \to \Kfun}}$ & $\Dgm{}{\KLfun}$ & $\Dgm{}{\coker{}{\Lfun \to \Kfun}}$ \\ \hline \hline
    \multicolumn{1}{l||}{\emph{domain:}} & \multicolumn{1}{l||}{\emph{image:}} & \multicolumn{1}{l}{\emph{codomain:}} \\
    $\Dgm{}{\Lfun}$ & $\Dgm{}{\image{}{\Lfun \to \Kfun}}$ & $\Dgm{}{\Kfun}$ \\ [+1mm]
  \end{tabular}
  \caption{\footnotesize The arrangement of the persistence diagrams in the $6$-pack for the pair $L \subseteq K$ in two rows and three columns.
  Read the six positions in a circle so that the domain lies between the kernel and the image, the image lies between the domain and the codomain, etc.}
  \label{tab:6-pack}
\end{table}

\begin{figure}[htb]
  \centering \vspace{0.1in}
  \includegraphics[width=\textwidth]{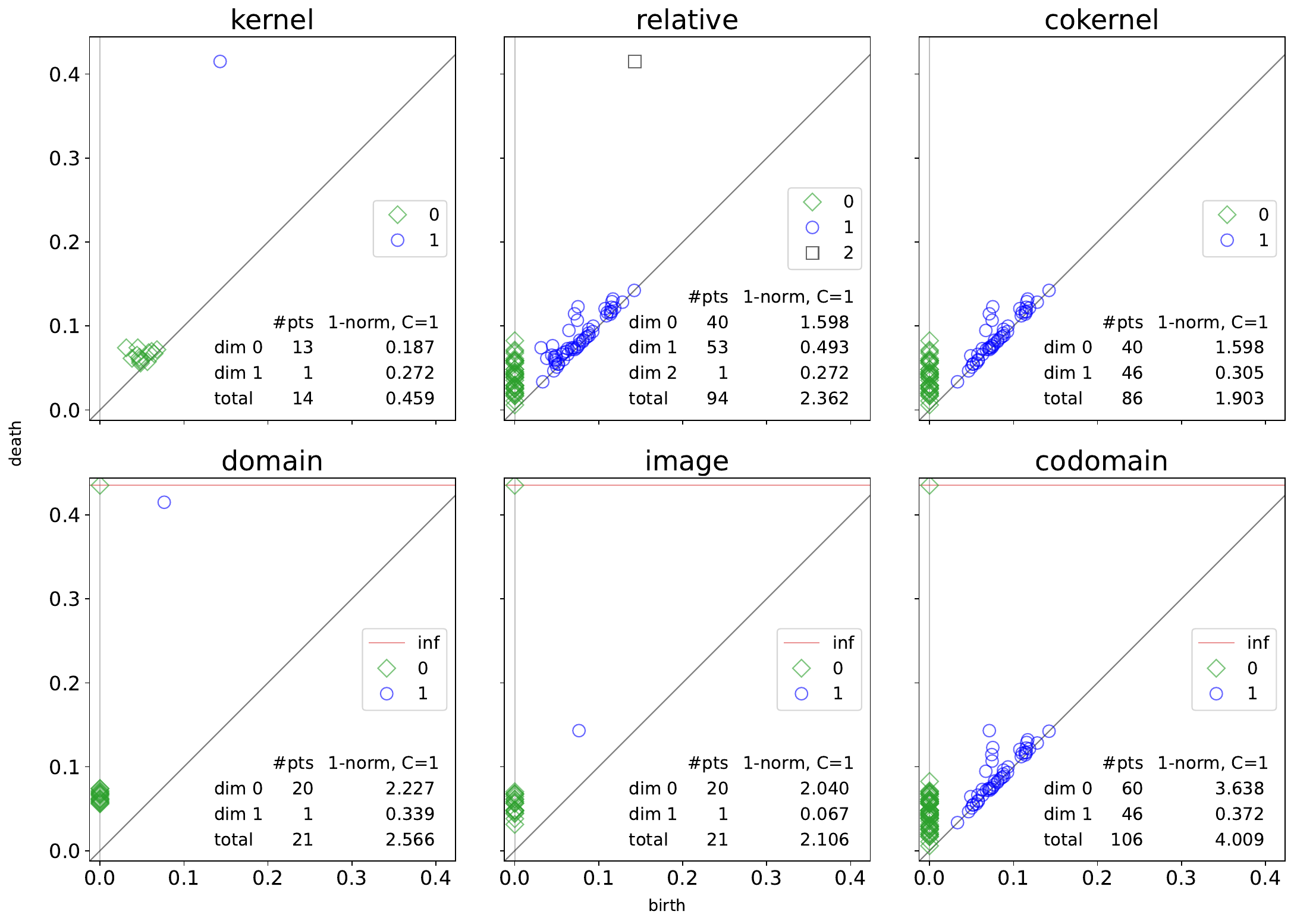}
  \vspace{-0.2in}
  \caption{\footnotesize{The $6$-pack for the bi-chromatic point set in the \emph{left panel} of Figure~\ref{fig:circle_on_background}. 
  The domain, $L$, is the blue subcomplex of the codomain, $K$, which is the $3$-dimensional chromatic Delaunay complex of the \emph{blue} and \emph{orange} points.}}
  \label{fig:6-pack_cb}
\end{figure}

\smallskip
Figure~\ref{fig:6-pack_cb} displays the $6$-pack for the point set in the left panel of Figure~\ref{fig:circle_on_background} with the blue subcomplex chosen as $L$.
Not surprisingly, the circle of blue points gives rise to a persistent $1$-cycle in $L$ captured in the diagram of the domain.
At the time of its birth, this $1$-cycle includes into a non-trivial $1$-cycle in $K$, so we get a point with the same birth-coordinate in the diagram of the image.
When the circle is filled by orange disks, it becomes a trivial $1$-cycle in $K$, which is marked by its death in the image and the simultaneous birth in the kernel.
Eventually, the blue circle is filled by blue disks, so it dies in the domain and simultaneously in the kernel.
To summarize, the point $(a,c)$ in the diagram of the domain splits into two points, $(a,b)$ in the diagram of the image, and $(b,c)$ in the diagram of the kernel.
While the split into two like this is a common phenomenon, not all points split in this manner; see the relations in the next subsection.
The point $(b,c)$ can also be seen one dimension higher in the relative persistence diagram of the pair.
Indeed, there is a non-bounding $2$-cycle in the quotient space once the blue circle is filled by orange disks.
Similarly, the point $(a,b)$ can also be found in the diagram of the codomain.
Both occurrences of $(a,b)$ correspond to the $1$-cycle representing the blue circle in homology, which explains why the point is missing in the diagram of the cokernel.

\smallskip
Note that other natural choices of $L$ are the orange subcomplex or the disjoint union of the blue and orange subcomplexes, which is a choice that is symmetric with respect to the colors. For two colors, these are the three possible $\Gamma$-subcomplexes defined in Section~\ref{sec:3.4}. We now revisit some of these observations in a more general setting, where the pair of topological spaces, $L \subseteq K$, is not necessarily formed by chromatic complexes. 

%%%%%%%%%%%%%%%%%%%%%%%%%%%%%%%%%%%%%%%%%%%%%%%%%%%%%%%%%%%
\subsection{Relations Between Diagrams in a 6-pack}
\label{sec:5.3}
%%%%%%%%%%%%%%%%%%%%%%%%%%%%%%%%%%%%%%%%%%%%%%%%%%%%%%%%%%%

The inclusion of sublevel sets $L_i \subseteq K_i$ induces a map on homology $\kappa_i \colon \Hgroup{}{L_i} \to \Hgroup{}{K_i}$.
This map has a component in each dimension, $p$, and we write $\kernel{p}{\kappa_i}$, $\image{p}{\kappa_i}$, $\coker{p}{\kappa_i}$ for the kernel, image, cokernel of $\kappa_i$ in dimension $p$.
\begin{lemma}
  \label{lem:short_exact_sequences}
  Let $L_i \subseteq K_i$ be simplicial complexes and $\kappa_i \colon \Hgroup{}{L_i} \to \Hgroup{}{K_i}$ the induced map on homology.
  For each dimension, $p$, there are short exact sequences
  \begin{align}
    0 &\to \kernel{p}{\kappa_i} \to \Hgroup{p}{L_i} \to \image{p}{\kappa_i} \to 0, 
    \label{eqn:ranksg} \\
    0 &\to \image{p}{\kappa_i} \to \Hgroup{p}{K_i} \to \coker{p}{\kappa_i} \to 0, 
    \label{eqn:ranksf} \\
    0 &\to \coker{p}{\kappa_i} \to \Hgroup{p}{K_i,L_i} \to \kernel{p-1}{\kappa_i} \to 0.
    \label{eqn:ranksm}
  \end{align}
\end{lemma}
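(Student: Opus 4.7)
The plan is to get sequences \eqref{eqn:ranksg} and \eqref{eqn:ranksf} essentially from the definitions, and to extract \eqref{eqn:ranksm} from the long exact sequence of the pair $(K_i, L_i)$ recalled in \eqref{eqn:ESofPair}.

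First I would dispose of \eqref{eqn:ranksg}. The map $\kappa_i$ restricts to a surjection $\Hgroup{p}{L_i} \twoheadrightarrow \image{p}{\kappa_i}$ whose kernel is, by definition, $\kernel{p}{\kappa_i}$. The inclusion $\kernel{p}{\kappa_i} \hookrightarrow \Hgroup{p}{L_i}$ then makes the three-term sequence exact; this is just the first isomorphism theorem phrased as a short exact sequence. Sequence \eqref{eqn:ranksf} is equally immediate: $\image{p}{\kappa_i}$ is by definition a subspace of $\Hgroup{p}{K_i}$, and the quotient $\Hgroup{p}{K_i}/\image{p}{\kappa_i}$ is $\coker{p}{\kappa_i}$ by definition, so the short exact sequence is tautological.

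The main work is in \eqref{eqn:ranksm}, for which I would read off three consecutive arrows of the long exact sequence \eqref{eqn:ESofPair} of the pair $(K_i, L_i)$:
\begin{align*}
  \Hgroup{p}{L_i} \xrightarrow{\kappa_i} \Hgroup{p}{K_i} \xrightarrow{\jmath} \Hgroup{p}{K_i,L_i} \xrightarrow{\partial} \Hgroup{p-1}{L_i} \xrightarrow{\kappa_i} \Hgroup{p-1}{K_i}.
\end{align*}
Exactness at $\Hgroup{p}{K_i}$ gives $\ker \jmath = \image{p}{\kappa_i}$, so $\jmath$ factors through an injection $\coker{p}{\kappa_i} = \Hgroup{p}{K_i}/\image{p}{\kappa_i} \hookrightarrow \Hgroup{p}{K_i,L_i}$ whose image equals $\image \jmath$. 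Exactness at $\Hgroup{p}{K_i,L_i}$ identifies $\image \jmath = \ker \partial$, so this injection is exactly the kernel of $\partial$. Finally, exactness at $\Hgroup{p-1}{L_i}$ gives $\image \partial = \ker \kappa_i = \kernel{p-1}{\kappa_i}$, so $\partial$ descends to a surjection $\Hgroup{p}{K_i,L_i} \twoheadrightarrow \kernel{p-1}{\kappa_i}$ whose kernel is the image of $\coker{p}{\kappa_i}$. Assembling these two facts yields \eqref{eqn:ranksm}.

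I don't expect any genuine obstacle here; the content is entirely formal manipulation of the long exact sequence of a pair together with the first isomorphism theorem. The only point requiring a touch of care is checking that the map $\jmath$ truly descends to an injection out of the quotient $\coker{p}{\kappa_i}$, but this is immediate once one observes that its kernel is precisely $\image{p}{\kappa_i}$, the subspace being quotiented out.
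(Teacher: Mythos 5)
Your proposal is correct and follows essentially the same route as the paper: the first two sequences come from the definitions together with the isomorphism theorem, and the third is extracted from the long exact sequence of the pair \eqref{eqn:ESofPair}. The only difference is cosmetic—the paper splits $\Hgroup{p}{L_i}$ and $\Hgroup{p}{K_i}$ as direct sums (kernel plus image, image plus cokernel) and substitutes these into the long exact sequence, whereas you chop the sequence directly at three consecutive terms; your version has the minor advantage of not invoking the splitting of vector spaces, so it would work over an arbitrary coefficient ring.
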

\begin{proof}
  The first two exact sequences are obvious from the definitions and the isomorphism theorem.
  To see the third exact sequence, we recall the long exact sequence of the pair; see Equation (\ref{eqn:ESofPair}).
  Working with field coefficients, all homology groups are vector spaces and thus split.
  In particular, $\Hgroup{p}{L_i} \simeq \kernel{p}{\kappa_i} \oplus \image{p}{\kappa_i}$, in which $\kernel{p}{\kappa_i}$ and $\image{p}{\kappa_i}$ are the images of the incoming and outgoing maps.
  We can therefore substitute $\kernel{p}{\kappa_i} \to 0 \to \image{p}{\kappa_i}$ for $\Hgroup{p}{L_i}$.
  By the same token, we substitute $\image{p}{\kappa_i} \to 0 \to \coker{p}{\kappa_i}$ for $\Hgroup{p}{K_i}$, and we remove $0 \to \image{p}{\kappa_i} \to \image{p}{\kappa_i}$ to get
  \begin{align*}
    \ldots \to \kernel{p}{\kappa_i} \to 0 &\to \coker{p}{\kappa_i} \to \Hgroup{p}{K_i,L_i} \to \kernel{p-1}{\kappa_i} \to 0 \to \coker{p-1}{\kappa_i} \to \ldots,
  \end{align*}
  which contains the required third short exact sequence.
\end{proof}
It follows that the ranks of $\kernel{p}{\kappa_i}$ and $\image{p}{\kappa_i}$ add up to the rank of $\Hgroup{p}{L_i}$, etc.
This implies relations between the $1$-norms of corresponding persistence diagrams.
\begin{theorem}
  \label{thm:norm_relations}
  Let $L \subseteq K$ be simplicial complexes, $\Kfun \colon K \to \Rspace$ monotonic, and $\Lfun$, $\KLfun$ the restrictions of $\Kfun$ to $L$ and $K \setminus L$.
  For each dimension, $p$, and any fixed cut-off for the $1$-norms, $C>0$,
  \begin{align}
    \norm{\Dgm{p}{\Lfun}}_1  &=  \norm{\Dgm{p}{\kernel{}{\Lfun \to \Kfun}}}_1 + \norm{\Dgm{p}{\image{}{\Lfun \to \Kfun}}}_1 ,
      \label{eqn:onenormg} \\
    \norm{\Dgm{p}{\Kfun}}_1  &=  \norm{\Dgm{p}{\image{}{\Lfun \to \Kfun}}}_1 + \norm{\Dgm{p}{\coker{}{\Lfun \to \Kfun}}}_1 , 
      \label{eqn:onenormf} \\
    \norm{\Dgm{p}{\KLfun}}_1  &=  \norm{\Dgm{p}{\coker{}{\Lfun \to \Kfun}}}_1 + \norm{\Dgm{p-1}{\kernel{}{\Lfun \to \Kfun}}}_1 .
      \label{eqn:onenormm}
  \end{align}
\end{theorem}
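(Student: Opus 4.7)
The plan is to reduce each identity to a pointwise rank identity and integrate. The key observation is that for any of the six persistence modules under consideration, decomposing into interval summands expresses the $1$-norm of the associated diagram as an integral of the rank function: if $V_\cdot$ is any of $\Hgroup{p}{L_\cdot}$, $\Hgroup{p}{K_\cdot}$, $\Hgroup{p}{K_\cdot, L_\cdot}$, $\kernel{p}{\kappa_\cdot}$, $\image{p}{\kappa_\cdot}$, or $\coker{p}{\kappa_\cdot}$, and $C$ is the cut-off replacing $\infty$, then
\begin{align*}
  \norm{\Dgm{}{V_\cdot}}_1 \ =\ \int_{-\infty}^{C} \rank{V_r}\, \diff r,
\end{align*}
since each bar $[b,d)$ contributes length $d - b$ on the left and contributes $1$ to $\rank{V_r}$ exactly on $[b,d)$ on the right. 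For the three primary modules this is standard; for the kernel, image, and cokernel modules this uses the existence of the diagrams established in \cite{CEHM09}, which ensures an interval decomposition is available.

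Next, I would invoke Lemma~\ref{lem:short_exact_sequences} pointwise at each parameter $r$. The three short exact sequences of finite-dimensional vector spaces give the rank additivities
\begin{align*}
  \rank{\Hgroup{p}{L_r}}    &= \rank{\kernel{p}{\kappa_r}} + \rank{\image{p}{\kappa_r}}, \\
  \rank{\Hgroup{p}{K_r}}    &= \rank{\image{p}{\kappa_r}} + \rank{\coker{p}{\kappa_r}}, \\
  \rank{\Hgroup{p}{K_r,L_r}} &= \rank{\coker{p}{\kappa_r}} + \rank{\kernel{p-1}{\kappa_r}}.
\end{align*}
Integrating each of these identities in $r$ over $(-\infty, C]$ and applying the integral formula of the previous paragraph to each term then yields exactly \eqref{eqn:onenormg}, \eqref{eqn:onenormf}, and \eqref{eqn:onenormm}, respectively.

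The only bookkeeping subtlety is the treatment of the infinite bars. I would fix a single cut-off $C$ strictly larger than every birth and death appearing in any of the six diagrams; this is possible since all critical values are bounded by the finitely many values of $\Kfun$. With this common $C$, the integral formula applies uniformly to all six modules and the integration step is unambiguous. The dimension shift in \eqref{eqn:onenormm} is not an obstacle: it is precisely the shift already built into the third short exact sequence of Lemma~\ref{lem:short_exact_sequences}, which arises from the connecting homomorphism of the long exact sequence of the pair. I expect the main conceptual point, rather than a real obstacle, to be justifying the integral formula for the kernel, image, and cokernel persistence modules, which we resolve by appealing to their interval decomposability from \cite{CEHM09}.
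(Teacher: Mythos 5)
Your proposal is correct and follows essentially the same route as the paper: the paper also expresses each $1$-norm as a weighted sum $\sum_{i}(r_{i+1}-r_i)\,\rank{V_i}$ of ranks over the (finitely many) critical values with the cut-off $C$ as the last value, which is exactly the discrete form of your integral formula, and then applies the pointwise rank additivity from the three short exact sequences of Lemma~\ref{lem:short_exact_sequences}. No gaps.
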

\begin{proof}
  We prove \eqref{eqn:onenormg}.
  We write $0 \leq r_1,r_2,\ldots,r_n$ for the values of $\Kfun$ smaller than $C$.
  In addition, set $r_0 = - \infty$ and use the cut-off $r_{n+1}=C$ for the $1$-norms. 
  Letting $L_i = \Lfun^{-1} [0, r_i]$, note that $L_i = \Lfun^{-1} [0,r]$ for all $r_i \leq r < r_{i+1}$, and hence the ranks of the various groups are constant between two contiguous values.
  We can therefore write the $1$-norm of $\Dgm{p}{\Lfun}$ as a sum of $n$ contributions, and similar for the $1$-norms of the kernel and image diagrams:
  \begin{align}
    \norm{\Dgm{p}{\Lfun}}_1  &= \sum\nolimits_{i=0}^{n} (r_{i+1}-r_i) \, \rank{\Hgroup{p}{L_i}} , \\
    \norm{\Dgm{p}{\kernel{}{\Lfun \to \Kfun}}}_1  &= \sum\nolimits_{i=0}^{n} (r_{i+1}-r_i) \, \rank{\kernel{p}{\kappa_i}} , \\
    \norm{\Dgm{p}{\image{}{\Lfun \to \Kfun}}}_1  &= \sum\nolimits_{i=0}^{n} (r_{i+1}-r_i) \, \rank{\image{p}{\kappa_i}} .
  \end{align}
  We thus get \eqref{eqn:onenormg} from \eqref{eqn:ranksg}.
  With the same argument applied to $K_i$, image, and cokernel, we get \eqref{eqn:onenormf} from \eqref{eqn:ranksf}, and applied to $(K_i,L_i)$, cokernel, and kernel, we get \eqref{eqn:onenormm} from \eqref{eqn:ranksm}.
\end{proof}

We note that similar equations do not hold for the $0$-norm, which counts the points in the diagrams.
Putting the equations in Theorem~\ref{thm:norm_relations} together yields a vanishing alternating sum:
\begin{align*}
  \sum\nolimits_{p \in \Zspace} (-1)^p \left[ \norm{\Dgm{p}{\Lfun}}_1  - \norm{\Dgm{p}{\Kfun}}_1 + \norm{\Dgm{p}{\KLfun}}_1 \right]  &=  0 .
\end{align*}
While there are relations between the diagrams in a 6-pack, no single diagram is necessarily determined by the others. Figure~\ref{fig:counterexample} shows one such example.
\begin{figure}[htb]
  \centering \vspace{0.1in}
  \resizebox{!}{2.0in}{\input{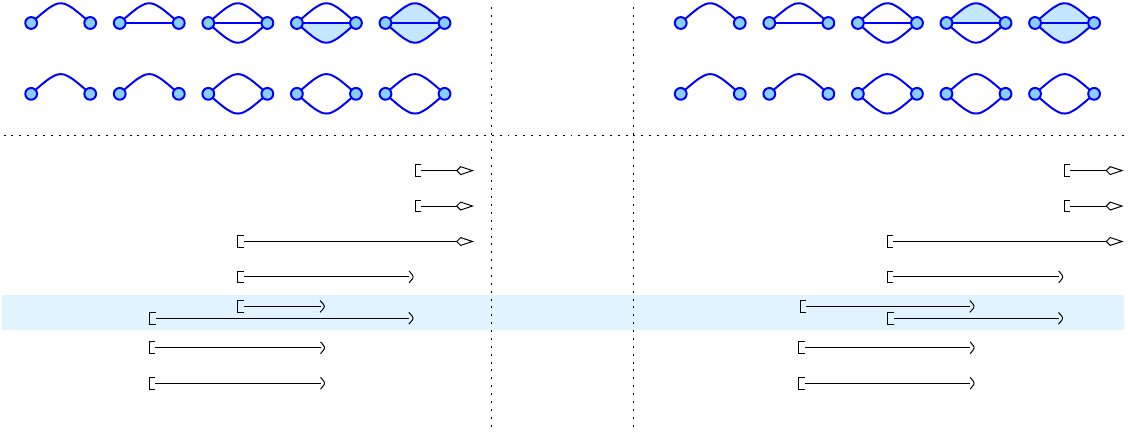_t}}
  \caption{\footnotesize{Example showing that five diagrams do not imply the sixth.
    The two filtrations differ by a single $2$-dimensional cell added in the respective fourth steps of the filtrations.
    Correspondingly, five of the $1$-dimensional persistence diagrams (shown as barcodes) are the same, while the highlighted diagrams of the codomain differ on the two sides.}}
  \label{fig:counterexample}
\end{figure}

Further relations among the diagrams in a $6$-pack are suggested by the case-by-case analysis for the simultaneous occurrence of births and deaths in various groups provided in \cite{CEHM09}. For example, consider the triple $\kernel{}{\kappa_i}$, $\Hgroup{}{L_i}$, $\image{}{\kappa_i}$. At a given radius, the rank of each group can change by at most one. The short exact sequence (\ref{eqn:ranksg}) reduces the twenty-six non-trivial combinations of changes down to only six. Out of those, \cite{CEHM09} gives examples for five of them and shows that the sixth, death-nothing-birth, cannot occur because a death in $\kernel{}{\kappa_i}$ always implies a death in $\Hgroup{}{L_i}$. This is an additional relation, which is not implied directly by (\ref{eqn:ranksg}). The same case is excluded for the triple in (\ref{eqn:ranksf}). Analogously, one can show that the case death-nothing-birth is excluded for the triple $\coker{p}{\kappa_i}$, $\Hgroup{p}{K_i,L_i}$, $\kernel{p-1}{\kappa_i}$.

%%%%%%%%%%%%%%%%%%%%%%%%%%%%%%%%%%%%%%%%%%%%%%%%%%%%%%%%%%%
\subsection{Relations Between 6-packs of a Triplet}
\label{sec:5.4}
%%%%%%%%%%%%%%%%%%%%%%%%%%%%%%%%%%%%%%%%%%%%%%%%%%%%%%%%%%%

The framework described so far is amenable to a pair of complexes $L \subseteq K$, filtered by a monotonic function. This section addresses the next simplest case: when we have a sequence of three nested complexes, $M \subseteq L \subseteq K$, which gives rise to four long exact sequences:
\begin{align}
  \ldots \to \Hgroup{p}{L} \to \Hgroup{p}{K} &\to \Hgroup{p}{K,L} \to \Hgroup{p-1}{L} \to \ldots ,\label{eq:les_LK} \\
  \ldots \to \Hgroup{p}{M} \to \Hgroup{p}{K} &\to \Hgroup{p}{K,M} \to \Hgroup{p-1}{M} \to \ldots ,\label{eq:les_MK} \\
  \ldots \to \Hgroup{p}{M} \to \Hgroup{p}{L} &\to \Hgroup{p}{L,M} \to \Hgroup{p-1}{M} \to \ldots ,\label{eq:les_ML} \\
  \ldots \to \Hgroup{p}{L,M} \to \Hgroup{p}{K,M} &\to \Hgroup{p}{K,L} \to \Hgroup{p-1}{L,M} \to \ldots\label{eq:les_rel} .
\end{align}
\begin{figure}[htb]
  \centering \vspace{0.1in}
  \resizebox{!}{1.4in}{\input{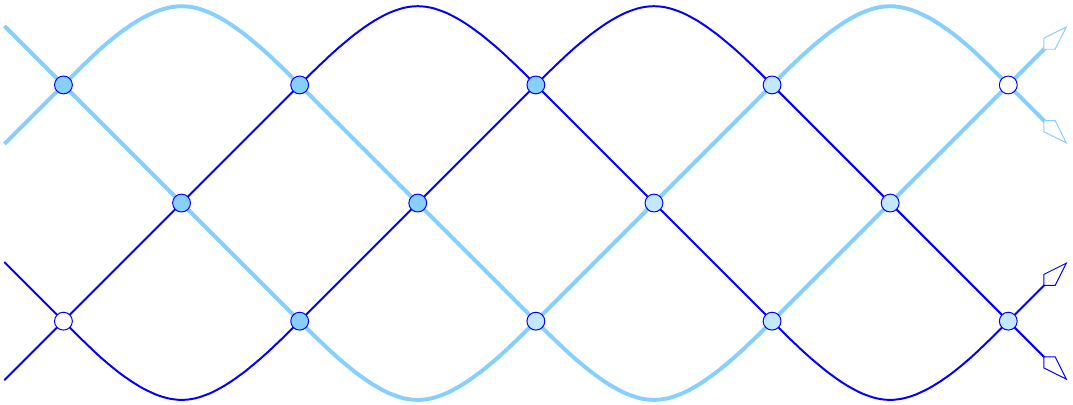_t}}
  \caption{\footnotesize{The four exact sequences for three complexes drawn along sine-like curves in the plane.
  After each half-period, the dimension of the homology group drops by one.}}
  \label{fig:sine}
\end{figure}
To shed light on how they relate to each other, we draw them as sine-like curves, each directed from left to right, with the homology groups sitting at the crossings between the curves; see Figure~\ref{fig:sine}.
Observe that the upper left triangular diagram commutes, which implies
\begin{align}
  \kernel{}{[\Hgroup{p}{M} \to \Hgroup{p}{L}]} &\subseteq \kernel{}{[\Hgroup{p}{M} \to \Hgroup{p}{K}]} , 
    \label{eqn:3kernel} \\
  \image{}{[\Hgroup{p}{M} \to \Hgroup{p}{K}]} &\subseteq \image{}{[\Hgroup{p}{L} \to \Hgroup{p}{K}]} 
    \label{eqn:3image}
\end{align}
for all dimensions $p$.
Similar inclusions follow from the commutativity of the other regions in the arrangement of curves. 
The four inclusions that give rise to the sequences (\ref{eq:les_LK}) to (\ref{eq:les_rel}) yield four $6$-packs, among which six diagrams appear twice, namely, $\Dgm{}{f_K}$, $\Dgm{}{f_L}$, $\Dgm{}{f_M}$, $\Dgm{}{f_{K,L}}$, $\Dgm{}{f_{K,M}}$, $\Dgm{}{f_{L,M}}$. 
Therefore, we have eighteen unique diagrams, some of which are closely related.

%%%%%%%%%%%%%%%%%%%%%%%%%%%%%%%%%%%%%%%%%%%%%%%%%%%%%%%%%%%
\subsection{A Tri-chromatic Case Study}
\label{sec:5.5}
%%%%%%%%%%%%%%%%%%%%%%%%%%%%%%%%%%%%%%%%%%%%%%%%%%%%%%%%%%%
    
While two colors give rise to interesting patterns, more colors do more so.
With the increase in the number of colors, there is an explosive increase of configurations to study.
We suggest looking at the relations between \emph{$k$-chromatic subcomplexes} of $\Delaunay{}{\chi}$, which are the subcomplexes composed of all simplices with at most $k$ colors, as defined in Section~\ref{sec:3.4}.
In this section, we focus on the tri-chromatic case, with colors $\sigma = \{0,1,2\}$. 
Let $M$ be the mono-chromatic subcomplex, $L$ the bi-chromatic subcomplex, and $K$ the full tri-chromatic Delaunay complex.
As before, $\Kfun \colon K \to \Rspace$ is the chromatic squared radius function, and $\Lfun$, $\Mfun$, $\KLfun$, $\KMfun$, $\LMfun$ are its restrictions.
A cycle can be formed by points of $1$, $2$, or $3$ colors, and it can be filled by points of $0$, $1$, or $2$ additional colors.
Requiring that the sum of two numbers is at most $3$, we get the six mingling types sketched in Figure~\ref{fig:patterns-filled}.
Note that these six patterns are not independent.
For example, the pattern 1+2 also gives rise to pattern 1+0, because the cycle gets filled by its own color eventually.
However, different patterns corresponding to the same cycle will generally have different persistence, which quantifies which patterns is a better fit.
Without a claim on completeness, we list where in the $6$-packs one can find prominent cases of each of these six patterns.
\begin{description}
  \item[{\sc Case} 1+0] $\Dgm{}{\Mfun}$. The complex $M$ is the disjoint union of the three mono-chromatic Delaunay complexes. The diagram records the mono-chromatic cycles.
  \item[{\sc Case} 2+0] $\Dgm{}{\coker{}{\Mfun \to \Lfun}}$. The complex $L$ contains all mono- and bi-chromatic cycles, and it shares the former with $M$. Therefore, we look at the cokernel to keep only the cycles that need two colors to be formed. 
  A cycle dies either when it is filled by its own two colors, or when one of the two colors suffices to form a homologous cycle.
  \item[{\sc Case} 3+0] $\Dgm{}{\coker{}{\Lfun \to \Kfun}}$. As in the previous case, we look at the cokernel to capture cycles that are formed by all three colors, but not yet by any two.
  \item[{\sc Case} 1+1] $\Dgm{}{\kernel{}{\Mfun \to \Lfun}}$. When a cycle formed by one color is filled by another color, a birth in this diagram occurs, and the feature persists until it is filled by its own color.
  \item[{\sc Case} 2+1] $\Dgm{}{\kernel{}{\LMfun \to \KMfun}}$. The idea is similar to Case 1+1: we look at cycles formed by two colors that are filled when also using the third. 
  Unlike in the previous case, we consider the quotient spaces to filter out the mono-chromatic cycles.
  \item[{\sc Case} 1+2] $\Dgm{}{\coker{}{\LMfun \to \KMfun}}$. Mono-chromatic $p$-cycles filled by the other colors appear in the pair $(K,M)$ as $(p+1)$-cycles.
  Those that are filled by exactly one other color also appear in $(L,M)$. We use the cokernel to filter them out.
\end{description}

\begin{figure}[htb]
  \centering \vspace{0.1in}
  \includegraphics[width=\textwidth]{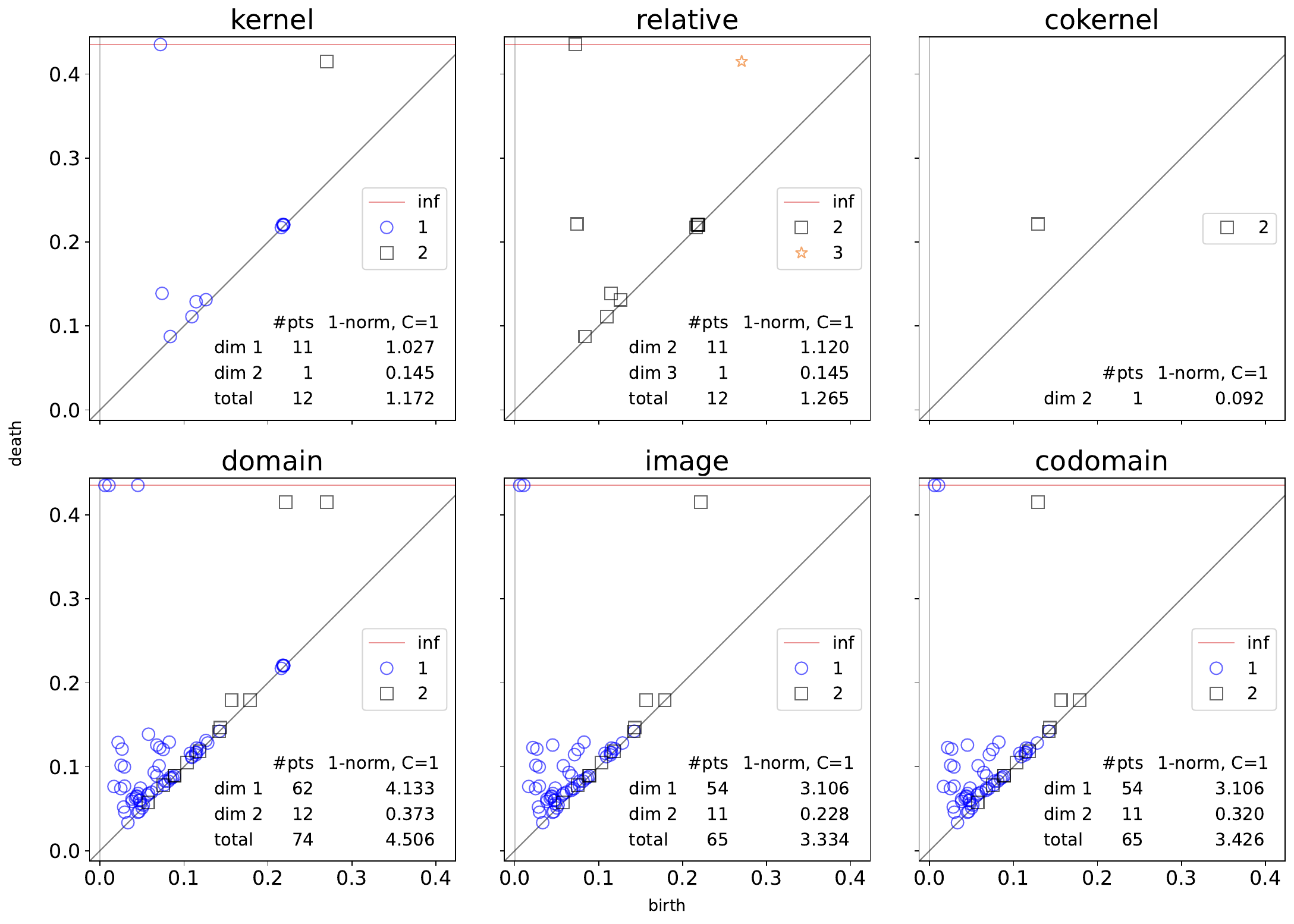}
  \vspace{-0.2in}
  \caption{\footnotesize{The $6$-pack of $(L,M) \subseteq (K,M)$ for the data in the \emph{right panel} of Figure~\ref{fig:circle_on_background}.
  $M$, $L$, and $K$ are the $1$-, $2$- and $3$-chromatic subcomplexes of the chromatic Delaunay complex.}}
  \label{fig:6-pack_csb}
\end{figure}

\smallskip
We now look more closely at the concrete example displayed in the right panel of Figure~\ref{fig:circle_on_background}: a circle of blue points with split background of green and orange points; compare with the mingling pattern 1+2.
Focusing on this pattern, we search for the $6$-pack of the inclusion of the pairs $(L,M) \subseteq (K,M)$ in Figure~\ref{fig:6-pack_csb}.
As suggested in Case~1+2 above, we expect a clear signal in the cokernel diagram, and indeed we see a single prominent point representing a $2$-dimensional relative class.
By construction, this class is born when the mono-chromatic $1$-cycle is filled with two extra colors, and its persistence indicates how much longer it takes to fill the $1$-cycle with just one extra color.
Compare this with the even more prominent point in the diagram of the codomain, $\Dgm{}{f_{K,M}}$.
This point represents the same $1$-cycle, but it expresses different information because it is not sensitive to whether the $1$-cycle is filled by one or two additional colors.

\smallskip
It is interesting to interpret the two high persistence points in the diagram of the domain, which records classes in $\Hgroup{}{L_i,M_i}$.
Since the background consists of two colors, it fills the blue $1$-cycle with only one additional color twice, once with green and another time with orange.
Both classes die at the same moment, namely when the blue cycle is filled by its own color.

\smallskip
As described in the caption of Figure~\ref{fig:patterns-filled}, Case~1+2 could also be interpreted as two overlapping instances of Case~1+1. For example, consider mixing the orange and green points in the right panel of Figure~\ref{fig:circle_on_background}. This pattern is captured in $\Dgm{}{\Lfun}$ as explained in Figure~\ref{fig:bi-chromatic_subcomplex_2d_3_colors}.

%%%%%%%%%%%%%%%%%%%%%%%%%%%%%%%%%%%%%%%%%%%%%%%%%%%%%%%%%%%
%%%%%%%%%%%%%%%%%%%%%%%%%%%%%%%%%%%%%%%%%%%%%%%%%%%%%%%%%%%
\section{Discussion}
\label{sec:6}
%%%%%%%%%%%%%%%%%%%%%%%%%%%%%%%%%%%%%%%%%%%%%%%%%%%%%%%%%%%
%%%%%%%%%%%%%%%%%%%%%%%%%%%%%%%%%%%%%%%%%%%%%%%%%%%%%%%%%%%

The main contribution of this paper is the extension of the theory of alpha complexes to the setting where points are assigned a label. We prove structural results about the radius function on the chromatic Delaunay complex and provide an implementation that facilitates its use in applications. The work reported in this paper suggests new directions of mathematical research aimed at solidifying our understanding of the chromatic setting. 
We list three possible directions.
\smallskip \begin{itemize}
  \item Develop a chromatic variant of Forman's \textbf{discrete Morse theory} \cite{For98}.
  Two concrete questions are the extension of the collapsibility of the \v{C}ech complex to the alpha complex proved in the mono-chromatic case \cite{BaEd17} and the further collapse of $\Alpha{r}{\chi}$ to $\Alpha{r}{A}$.
  \item In many biological questions, the mingling between different populations of cells changes over time; see e.g.\ the study of cell segregation in early development \cite{Mai12} and an early topological approach in \cite{KeEd13}.
  It would therefore be useful to extend the \textbf{vineyard algorithm} \cite{CEM06} to the chromatic setting introduced in this paper.
  \item Applications in material science suggest to relax the focus on the nearest point and base the theory on order-$k$ Voronoi tessellations \cite{Fej76,ShHo75}, for $k$ possibly larger than $1$.
  Among the different options, we favor the construction in which the \textbf{order-$k$ chromatic Delaunay complex} is isomorphic to the nerve of the order-$k$ Voronoi tessellation of the chromatically lifted points.
\end{itemize} \smallskip
In applications to atomic structures, it is furthermore useful to model with different size balls.
This can be done by basing the chromatic construction on the extension of Delaunay complexes to points with real weights, known under a variety of names, such as \emph{regular triangulations} \cite{GKZ94} or \emph{weighted Delaunay complexes} \cite[Section~III.3]{EdHa10}.
Letting $w \colon A \to \Rspace$ assign the weights, the \emph{weighted square distance} of a point $x \in \Rspace^d$ to $a \in A$ is $\Edist{x}{a}^2 - w(a)$.
In molecular biology, $w(a)$ would typically be the square root of the van der Waals radius so that the zero set of the power distance is the sphere with this radius.
With this notion, all geometric structures can be defined as before.
In particular, the \emph{weighted chromatic Delaunay complex} is isomorphic to the weighted Delaunay complex for the chromatically lifted points, and the sublevel set of the (weighted) radius function for $r$ serves as the discrete representation of the union of balls in which the ball centered at $a \in A$ has squared radius $w(a) + r^2$.
We drop further details while mentioning that the difference between weighted and unweighted theories is small, and minor changes suffice to adapt the software for constructing the $6$-packs to the weighted case.

%%%%%%%%%%%%%%%%%%%%%%%%%%%

\clearpage
\appendix

%%%%%%%%%%%%%%%%%%%%%%%%%%%%%%%%%%%%%%%
%%%%%%%%%%%%%%%%%%%%%%%%%%%%%%%%%%%%%%%
\section{Homotopy Equivalence of Pairs}
\label{apx:he_pairs}
%%%%%%%%%%%%%%%%%%%%%%%%%%%%%%%%%%%%%%%
%%%%%%%%%%%%%%%%%%%%%%%%%%%%%%%%%%%%%%%

We use the notation of \cite{BKRR23}, particularly of Section~3, and describe an additional consequence of the proof of Theorem~B/3.11 in a particular case relevant to our work. Recall that $\mathsf{ClConv}_\ast$ is a category with the following data. Objects are $(X, \mathcal{C}_\ast)$, where $X$ is a subset of  $\Rspace^d$, and $\mathcal{C}_\ast=(\mathcal{C}, (c_\nu)_{\nu\in\Nerve{\mathcal{C}}})$ is its finite cover by closed convex sets, $\mathcal{C}=(C_i)_{i\in I}$, together with a fixed point $c_\nu\in \bigcap_{i \in \nu} C_i$ for each $\nu\subseteq I$ such that the intersection is non-empty. The morphisms $(f, \varphi): (X, \mathcal{C}_\ast) \rightarrow (Y, \mathcal{D}_\ast)$ comprise of an affine linear map $f: X\rightarrow Y$ and a map of cover indices $\varphi: I\rightarrow J$ such that $f(C_i)\subseteq D_{\varphi(i)}$ and $f(c_\nu) = d_{\varphi(\nu)}$ for each $\nu\subseteq I$.
\begin{proposition}\label{prop:homotopy_of_pairs}
    Let $(X, \mathcal{C}_\ast) \overset{(\iota,\varphi)}{\rightarrow} (Y, \mathcal{D}_\ast)$ be a morphism in $\mathsf{ClConv}_\ast$ such that:
    \begin{enumerate}
        \item\label{prop:homotopy_of_pairs:1} $\varphi:[n]\longrightarrow [m]$ is the identity inclusion, where $[n]$, $[m]$ are index sets for the covers $\mathcal{C}$, $\mathcal{D}$,
        \item\label{prop:homotopy_of_pairs:2} $\iota: X \rightarrow Y$ is an inclusion,
        \item\label{prop:homotopy_of_pairs:3} $X \cap D_i = C_i$ for all $1\leq i\leq n$, and $X \cap D_j = \emptyset$ for all $n < j \leq m$.
    \end{enumerate}
    Then $\Nerve{\mathcal{C}} \subseteq \Nerve{\mathcal{D}}$ and the maps $\gamma$, $\psi$ defined in \cite[Section~3]{BKRR23} for $(Y, \mathcal{D}_\ast)$ give a homotopy equivalence between pairs $(Y, X)$ and $(\Nerve{\mathcal{D}}, \Nerve{\mathcal{C}})$. That is, if $H_t, K_t$ are the homotopies between $\gamma\circ\psi$ and $\mathrm{id}_Y$, and $\psi\circ\gamma$ and $\mathrm{id}_{|\Nerve{\mathcal{D}}|}$, respectively, then $H_t(X)\subseteq X$ and $K_t(|\Nerve{\mathcal{C}}|)\subseteq |\Nerve{\mathcal{C}}|$ for all $t\in [0,1]$.
\end{proposition}

Before the proof, note that the unions of the convex variants of Voronoi planks, $\VCPlank{r}{a}{\chi}{\Gamma}$, defined in Section~\ref{sec:3.4} of this paper satisfy the above assumptions both for the inclusion going from $\Gamma$ to a supercomplex $\Delta$, and for the one going from radius $r$ to a larger radius $R$. Proposition~\ref{prop:homotopy_of_pairs} then implies the following
\begin{corollary}
  \label{cor:quotients_homotopy_equivalence}
  For $\Gamma \subseteq \Delta$ and $r \leq R$, there are homotopy equivalences satisfying the following commutative diagram
\[
    \begin{tikzcd}
    \bigcup\nolimits_{a \in A} \CPlank{R}{a}{\chi}{\Delta} / \bigcup\nolimits_{a \in A} \CPlank{R}{a}{\chi}{\Gamma}
    \arrow[r, leftrightarrow, "\simeq"] &
    \AlphaSub{R}{\chi}{\Delta} / \AlphaSub{R}{\chi}{\Gamma} \\
    \bigcup\nolimits_{a \in A} \CPlank{r}{a}{\chi}{\Delta} / \bigcup\nolimits_{a \in A} \CPlank{r}{a}{\chi}{\Gamma}
    \arrow[u, hook] \arrow[r, leftrightarrow, "\simeq"] &
    \AlphaSub{r}{\chi}{\Delta} / \AlphaSub{r}{\chi}{\Gamma} \arrow[u, hook]
    \end{tikzcd}
\]
\end{corollary}

\begin{proof}[Proof of Proposition~\ref{prop:homotopy_of_pairs}]
    We follow \cite[Section~3]{BKRR23} using the same notation and terminology. 
    We describe why the homotopies $H_t, K_t$ are invariant on the subspaces.

    \smallskip
    The construction of $\psi$ requires an extension of the closed cover $\mathcal{D}$ to an open cover $\mathcal{V}$ with the same nerve; see Lemma~3.2 in \cite{BKRR23}.
    Assumption~(\ref{prop:homotopy_of_pairs:3}) of the proposition allows us to construct the extension so that $\mathcal{U}=(U_i = V_i\cap X)$ is the analogous open extension of $\mathcal{C}$. 
    Therefore, $\psi \colon Y \to|\Nerve{\mathcal{D}}|$ restricted to $X$ is the homotopy $\psi_X \colon X \to |\Nerve{\mathcal{C}}|$.
    The map $\gamma \colon |\Nerve{\mathcal{D}}| \to Y$ restricted to $X$ is the homotopy $\gamma_X \colon |\Nerve{\mathcal{C}}| \to X$---this is the naturality of the map proved in Theorem~3.11 \cite{BKRR23}.

    \smallskip
    The homotopy $H_t$ between $\gamma \circ \psi$ and $\mathrm{id}_Y$ is described in the proof of Theorem~3.1 \cite{BKRR23} as a straight line homotopy.
    If $x\in X$, from the above we have $\gamma \circ \psi(x) \in X$. 
    There is some $i$ such that $x\in D_i$, and since $\gamma\circ\psi$ is carried by identity, $\gamma\circ\psi(x)\in D_i$. 
    Since $C_i = D_i\cap X$ by (\ref{prop:homotopy_of_pairs:3}), we have $\gamma \circ \psi(x) \in C_i$. 
    As also $\mathrm{id}_Y(x)\in C_i$, the whole line connecting the two points is in $C_i \subseteq X$. 
    This shows that $H_t(X)\subseteq X$ for all~$t$.
    The homotopy $K_t$ between $\psi \circ \gamma$ and $\mathrm{id}_{|\Nerve{\mathcal{D}}|}$ is constructed in Proposition~3.8 \cite{BKRR23} inductively, following paths defined by contractions of intersections of closed stars. As argued in the proof of Lemma~3.4 \cite{BKRR23}, those intersections are star-shaped with respect to the barycenter of the defining vertices of the stars.
    This implies that the contractions defined for the case of $\Nerve{\mathcal{C}}$ are restrictions of the contractions defined for the case of $\Nerve{\mathcal{D}}$, and, therefore, the final homotopy $K_t$ satisfies $K_t(|\Nerve{\mathcal{C}}|)\subseteq |\Nerve{\mathcal{C}}|$ for all $t$.
\end{proof}

\Skip{
\appendix
%%%%%%%%%%%%%%%%%%%%%%%%%%%%%%%%%%%%%%%%%%%%%%%%%
\clearpage
\section{Notation}
\label{app:N}
%%%%%%%%%%%%%%%%%%%%%%%%%%%%%%%%%%%%%%%%%%%%%%%%%

\begin{table}[h!]
  \centering
  \begin{tabular}{ll}
    $A \subseteq \Rspace^d; p \leq d$
      &  finite set; dimensions \\
    $\domain{a}{A}, \Voronoi{}{A}$
      &  Voronoi domain, Voronoi tessellation \\
    $\nu \subseteq A; S; \Delaunay{}{A}$
      &  simplex; empty sphere; Delaunay complex  \\
    $\Radiusf \colon \Delaunay{}{A} \to \Rspace$
      &  radius function \\
    $\Ball{r}{a}, \VBall{r}{a}{A}, \Alpha{r}{A}$
      &  ball, Voronoi ball, alpha complex \\
    \\
    $A \subseteq \Rspace^d; \chi \colon A \to \sigma$
      &  chromatic set \\
    $A_j = \chi^{-1} (j)$
      &  mono-chromatic subset \\
    $s = \card{\sigma} - 1; \sigma = \{0,\ldots,s\}$;
      &  cardinality/dimension; colors  \\
    $\domain{a}{A_{\chi (a)}}, \Voronoi{}{\chi}$
      &  Voronoi domain, chromatic Voronoi tessellation \\
    $(S_j)_{j \in \sigma};  \Delaunay{}{\chi}$
      &  stack of spheres; chromatic Delaunay complex \\
    $\Alpha{r}{\chi} = \Radiusf^{-1} [0,r]$
      &  chromatic alpha complex \\
    $\tau \subseteq \sigma; \chi|\tau \colon \chi^{-1}(\tau) \to \tau$
      &  subset of colors; restriction \\
    $\Alpha{r}{\chi|\tau}$
      &  restricted alpha complex \\
    $\Lift{A}{} = \Lift{A}{0} \cup \ldots \Lift{A}{s}$
      &  lifted points \\
    $e_j \colon E_j \to \Rspace, e \colon E \to \Rspace$
      &  quadratic functions on affine subspaces \\
    \\
    $B_i, S_i; j+1$
      &  balls, spheres; number of such \\
    $m, p = p_1+p_1, t$ 
      &  various cardinalities \\
    $f_\tau \colon \Rspace^d \to \Rspace$
      &  radius squared function for subset of colors \\
    $P; Q, Q', Q'', Q'''$
      &  intersection of Voronoi cells; faces \\
    $F_P; G$
      &  furthest-point Voronoi tessellation; cell \\
    $q = \dime{Q(x)}, q = \dime{G(x)}$
      &  dimensions of face, cell containing $x$ \\
    $\alpha \subseteq \beta \subseteq \gamma$
      &  simplices \\
    \\
    $\kkk = \Zspace/2\Zspace$
      &  mod-2 arithmetic \\
    $\Cgroup{}{K}, \Zgroup{}{K}, \Bgroup{}{K}, \Hgroup{}{K}$
      &  chain, cycle, boundary, homology group \\
    $f \colon K \to \Rspace$
      &  monotonic function or filter \\
    $K_i = f^{-1} (-\infty, r_i]$
      &  sublevel set \\
    $\fgroup{i}{j} \colon \Hgroup{}{K_i} \to \Hgroup{}{K_j}$
      &  linear map induced by inclusion \\
    $\Dgm{}{f}, \norm{\Dgm{}{f}}_1$
      &  persistence diagram, 1-norm of diagram \\
    $\Bottleneck{\Dgm{}{f}}{\Dgm{}{g}}$
      &  bottleneck distance \\
    $f_K; f_L, f_{K,L}$
      &  filter on $K$; restrictions \\
    $\kappa_i \colon \Hgroup{}{L_i} \to \Hgroup{}{K_i}$
      &  inclusion map \\
    $\kernel{}{\kappa_i}, \image{}{\kappa_i}, \coker{}{\kappa_i}$
      &  kernel, image, cokernel \\
    $M_i \subseteq L_i \subseteq K_i$
      &  nested complexes \\
  \end{tabular}
  \caption{Notation used in the paper.}
  \label{tbl:Notation}
\end{table}

\newpage
%%%%%%%%%%%%%%%%%%%%%%%%%%%%%%%%%%%%%%%%%%%%%%
\section{Definitions and Results}
%%%%%%%%%%%%%%%%%%%%%%%%%%%%%%%%%%%%%%%%%%%%%%

\begin{itemize}
  \item Section~\ref{sec:1}: Introduction.
  \item Section~\ref{sec:2}: Mono-chromatic Point Sets.
    \begin{itemize}
      \item Definition~\ref{dfn:conventional_genericity} (Conventional Genericity).
      \item Lemma~\ref{lem:empty_sphere_characterization} (Empty Sphere Characterization).
      \item Definition~\ref{dfn:generalised_discrete_morse_function} (Generalized Discrete Morse Function).
      \item Proposition~\ref{prop:radius_functions_are_generalized_discrete_Morse} (Radius Functions are Generalized Discrete Morse).
    \end{itemize}
  \item Section~\ref{sec:3}: Chromatic Point Sets.
    \begin{itemize}
      \item Lemma~\ref{lem:empty_stack_characterization} (Empty Stack Characterization).
      \item Definition~\ref{dfn:chromatic_alpha_complex} (Chromatic Alpha Complexes).
      \item Lemma~\ref{lem:alpha_union_in_alpha_chromatic} (Alpha Union in Alpha Chromatic).
      \item Lemma~\ref{lem:homotopy_equivalence} (Homotopy Equivalence).
      \item Lemma~\ref{lem:homotopy_equivalence_commutativity} (Homotopy Equivalence Commutativity).
      \item Lemma~\ref{lem:stacks_and_lifted_spheres} (Stacks and Lifted Spheres).
      \item Corollary~\ref{cor:chromatic_delaunay_is_delaunay} (Chromatic Delaunay is Delaunay).
      \item Theorem~\ref{thm:homotopy_equivalence_commutativity_general} (Homotopy Equivalence and Commutativity).
      \item Theorem~\ref{thm:chromatic_radius_function_in_linear_time} (Chromatic Radius Function in Linear Time).
    \end{itemize}
  \item Section~\ref{sec:4}: The Radius Function is Generalized Discrete Morse.
    \begin{itemize}
      \item Definition~\ref{dfn:chromatic_genericity} (Chromatic Genericity).
      \item Lemma~\ref{lem:equivalent_genericity_conditions} (Equivalent Genericity Conditions).
      \item Lemma~\ref{lem:chromatic_genericity_is_generic} (Chromatic Genericity is Generic).
      \item Lemma~\ref{lem:kkt} (KKT).
      \item Lemma~\ref{lem: uniqueness_of_dual_solution} (Uniqueness of Dual Solution).
      \item Theorem~\ref{thm:chromatic_radius_functions_are_generalized_discrete_Morse} (Chromatic Radius Functions are Generalized Discrete Morse).
    \end{itemize}
  \item Section~\ref{sec:5}: Persistent Homology of Chromatic Alpha Complexes.
    \begin{itemize}
      \item Lemma~\ref{lem:alternating_sum_of_ranks} (Alternating Sum of Ranks).
      \item Lemma~\ref{lem:short_exact_sequences} (Short Exact Sequences).
      \item Theorem~\ref{thm:norm_relations} (Norm Relations).
    \end{itemize}
  \item Section~\ref{sec:6}: Discussion.
\end{itemize}
}

\end{document}